\newcommand*{\doublerightarrow}[2]{\mathrel{
  \settowidth{\@tempdima}{$\scriptstyle#1$}
  \settowidth{\@tempdimb}{$\scriptstyle#2$}
  \ifdim\@tempdimb>\@tempdima \@tempdima=\@tempdimb\fi
  \mathop{\vcenter{
    \offinterlineskip\ialign{\hbox to\dimexpr\@tempdima+1em{##}\cr
    \rightarrowfill\cr\noalign{\kern.5ex}
    \rightarrowfill\cr}}}\limits^{\!#1}_{\!#2}}}
\newcommand{\img}[2][1]{\begin{gathered}\includegraphics[scale=#1]{#2}\end{gathered}}
\newcommand{\revLpic}{\raisebox{-1pt}{$\img{revL}$}}
\newcommand{\Lpic}{\raisebox{-1pt}{$\img{L}$}}
\newcommand{\squarepic}{\raisebox{-1pt}{$\img{sq}$}}
\newcommand\KK{\mathbf{K}}
\newcommand\LL{\mathbf{L}}
\newcommand\CC{\mathbb{C}}
\newcommand\PP{\mathbb{P}}
\newcommand\NN{\mathbb{N}}
\newcommand\ZZ{\mathbb{Z}}
\newcommand\GG{\mathbb{G}}
\numberwithin{equation}{section}
\theoremstyle{plain}
\newtheorem{theorem}{Theorem}[section]
\newtheorem{proposition}[theorem]{Proposition}
\newtheorem{corollary}[theorem]{Corollary}
\newtheorem{lemma}[theorem]{Lemma}
\theoremstyle{definition}
\newtheorem{definition}[theorem]{Definition}
\newtheorem{example}[theorem]{Example}
\theoremstyle{definition}
\newtheorem{remark}[theorem]{Remark}
\newtheorem{question}[theorem]{Question}
\newtheorem{problem}[theorem]{Problem}
\newcommand{\del}{\partial}
\newcommand{\delbar}{{\overline{\partial}}}
\newcommand{\mm}{\mathcal}
\newcommand{\GL}{\mathrm{GL}}
\newcommand{\cone}{\mathrm{Cone}}
\DeclareMathOperator{\coker}{{\mathrm{coker}}}
\DeclareMathOperator{\im}{{\mathrm{im}}}
\DeclareMathOperator{\aut}{{\mathrm{Aut}}}
\title{Obstruction Theory for Bigraded Differential Algebras}
\author{Jiahao Hu}
\date{}
\begin{document}
\maketitle
\begin{abstract}
We develop an obstruction theory for Hirsch extensions of cbba's with twisted coefficients. This leads to a variety of applications, including a structural theorem for minimal cbba's, a construction of relative minimal models, as well as a proof of uniqueness. These results are further employed to study automorphism groups of minimal cbba's and to characterize formality in terms of grading automorphisms.
\end{abstract}
\tableofcontents

\section*{Introduction}
\addcontentsline{toc}{section}{Introduction}
The purpose of this paper is to develop an obstruction theory for Hirsch extensions of cbba's (commutative bigraded bidifferential algebras) in order to study their homotopy theory. The corresponding theory for cdga's (commutative differential graded algebras) has led to a flood of breakthroughs in geometry and topology through a non-exhaustible list of work, starting with \cite{dgsm} and \cite{infinitesimal}. The homotopy theory, known as pluripotetial homotopy theory, for cbba under consideration here, put forth recently by Stelzig in \cite{stelzig2025pluripotential}, aims to extend such success towards understanding complex manifolds via their bigraded de Rham algebras of complex differential forms. In \cite{stelzig2025pluripotential} minimal models for cbba's are constructed in the Aeppli-simply-connected case which then leads to new holomorphic invariants of complex manifolds such as refinements of the de Rham homotopy groups.

However, the construction of minimal models in \cite{stelzig2025pluripotential} is based on a complicated analysis of various cohomology groups for bicomplexes, and has to go through a somewhat mysterious "re-minimization" step. Here we decide to offer an account of obstruction theory in order to complete and simplify the pluripotential homotopy theory of cbba. It turns out that once the obstruction theory for Hirsch extensions is in place, a neat construction of minimal models, among many other applications, soon follows.

We here emphasize the similarity between the homotopy theory of cbba and that of cdga--both theories rely heavily, if not entirely, on the following abstract categorical structures:
\begin{enumerate}[(i)]
	\item a triangulated category $\mathcal{C}$ (e.g. of chain complexes, bicomplexes) with a (homotopy) t-structure $\mathcal{C}_\ge 0,\mathcal{C}_\le 0$,
	\item symmetric monoidal operations $\oplus,\otimes$ compatible with the structures in (i), in particular $\oplus$ and $\otimes$ preserve $\mathcal{C}_{\ge 0}$;
	\item an exponential free functor, turning $\oplus$ into $\otimes$;
	\item a cohomology functor derived from t-structure $H:\mathcal{C}\to\mathcal{C}_\heartsuit$ which splits on object level, giving a theory of additive minimal models.
\end{enumerate}
With these, the additive homotopy theory on $\mathcal{C}$ can be promoted into a multiplicative homotopy theory on algebra objects in $\mathcal{C}$--additive minimal model upgraded to multiplicative minimal model, cohomology ungraded to cohomotopy and so on. The reader is recommended to keep an eye on where and how these structures are used.

\subsection*{Acknowledgments}
I thank Aleksandar Milivojevi\'c and Jonas Stelzig for useful discussions.

\section{Homotopy theory of bicomplex}
Throughout we work over a field $\KK$ of characteristic zero. The reader is assumed to be familiar with the homotopy theory of bicomplexes, see \cite{stelzig2021structure} for a reference.
\subsection{Preliminaries}
A bicomplex is a bigraded vector space equipped with two anti-commuting differentials $\del,\delbar$ of degrees $(1,0)$ and $(0,1)$ respectively. To a bicomplex, we associate various cohomology groups:
\begin{itemize}
	\item (Bott-Chern) $H_{BC}=\ker\del\cap\ker\delbar/\im\del\delbar$,
	\item (reduced Bott-Chern) $H_{\widetilde{BC}}=\ker \del \cap \ker \delbar\cap (\im \del+\im \delbar)/\im \del\delbar$,
	\item $H_{dot}=\ker \del\cap\ker \delbar/\ker \del \cap \ker \delbar\cap (\im \del+\im \delbar)$,
	\item (Aeppli) $H_A=\ker\del\delbar/(\im\del+\im\delbar)$,
	\item (reduced Aeppli) $H_{\widetilde{A}}=\ker \del \delbar/(\ker \del\cap\ker \delbar+\im \del+\im \delbar)$,
	\item (Dolbeault) $H_\del=\ker\del/\im\del$, $H_\delbar=\ker\delbar/\im\delbar$.
\end{itemize}
The Bott-Chern cohomology is differed from reduced Bott-Chern by $H_{dot}$; similarly Aeppli is differed from reduced Aeppli by $H_{dot}$. These groups come equipped with induced bigradings and differentials, such as:
\[
H_{\widetilde{A}}\doublerightarrow{\del}{\delbar} H_{BC}.
\]

A bicomplex morphism is said to be a quasi-isomorphism if it induces an isomorphism on both Bott-Chern and Aeppli cohomology. A bicomplex is said to be contractible if its Bott-Chern and Aeppli cohomology vanish. Every contractible bicomplex is a direct sum of squares:
\[
\begin{tikzcd}[row sep=small, column sep=small]
	\delbar x\ar[r,"\del"] & \del\delbar x\\
	x\ar[u,"\delbar"]\ar[r,"\del"] & \del x\ar[u,"\delbar"]
\end{tikzcd}
\]
%A square with $x$ in degree $(0,0)$ will be denoted by $\squarepic$ or $\square$. 
More generally every bicomplex is a direct sum of squares and zig-zags (\cite{stelzig2021structure}\cite{khovanov2020faithful}), see \Cref{tensor-product} for depictions of zig-zags. The zig-zag part of the direct sum decomposition can be recovered from cohomology--the zig-zag part is (non-canonically) isomorphic to $H_{\widetilde{A}}+H_{BC}$ and $H_{A}+H_{\widetilde{BC}}$, with induced differentials.

A bicomplex is said to be minimal if $\del\delbar=0$. A minimal model of a bicomplex is a minimal bicomplex quasi-isomorphic to it. Any two minimal models are isomorphic. In fact, the minimal model of a bicomplex is isomorphic to the zig-zag part of the aformentioned decomposition.

A square will be denoted by $\squarepic$ or $\square$, and we denote the following bicomplexes by $\revLpic$, $\bullet$ and $\Lpic$ respectively. These pictures are borrowed from \cite{stelzig2025pluripotential}.
\begin{center}
\begin{tabular}{ c c c c c }
$\begin{tikzcd}[row sep=small, column sep=small]
	y & x \\
	& z
	\arrow[from=1-1, to=1-2]
	\arrow[from=2-2, to=1-2]
\end{tikzcd}$ & &
 $x$ & &
 $\begin{tikzcd}[row sep=small, column sep=small]
	\delbar x \\
	x & \del x
	\arrow[from=2-1, to=1-1]
	\arrow[from=2-1, to=2-2]
\end{tikzcd}$
\end{tabular}
\end{center}
The convention here is that, each dot represents a one-dimensional vector space, horizontal and vertical arrows represent $\del$ and $\delbar$ respectively, each of which is an isomorphism; undrawn arrows vanish. 

%Here the degree of $x$ is $(0,0)$, and differentials not depicted vanish.
%We use upper indices for degrees thorughout, for example $V^k$, $V^{p,q}$.
\subsection{Truncation}
We use upper indices for degrees, for example $V^k$, $V^{p,q}$.
\begin{definition}[truncation]
	For a bicomplex $V$, we define its \textbf{truncations} $\tau_{\le k}V$ and $\tau_{\ge k}V$ to be
\begin{align*}
	(\tau_{\le k}V)^i&=
\begin{cases}
	V^i & i\le k-1\\
	(\ker \del\delbar)^k & i=k\\
	(\ker \del\cap\ker\delbar\cap(\im\del+\im\delbar))^{k+1} & i=k+1\\
	0 & i\ge k+2
\end{cases}\\
(\tau_{\ge k}V)^i &=
\begin{cases}
	0 & i\le k-1\\
	V^k/(\im\del+\im\delbar)^k & i=k\\
	V^{k+1}/(\im \del\delbar)^{k+1} & i=k+1\\
	V^i & i\ge k+2
\end{cases}
\end{align*}
\end{definition}
\begin{definition}[cohomology]
Define the \textbf{$k$-th cohomology bicomplex} $H^k$ to be \[H^k:=\tau_{\le k}\tau_{\ge k}=\tau_{\ge k}\tau_{\le k}=H^k_{A}\oplus H^{k+1}_{\widetilde{BC}}.\]
Therefore $HV=\oplus_k H^k V$ is a minimal model of $V	$.
\end{definition}

\begin{definition}[connectedness]\label{defn:connectedness}
	A bicomplex $V$ is said to be \textbf{$k$-connected} if one of the following equivalent conditions hold:
	\begin{enumerate}[(i)]
		\item $\tau_{\le k}V$ is contractible,
		\item $V\to \tau_{\ge k+1}V$ is a quasi-isomorphism,
		\item $H^{\le k}V=0$,
		\item $H_A^{\le k}V=0$.
	\end{enumerate}
\end{definition}

\begin{lemma}
	\begin{enumerate}[(i)]
		\item $H^{\ge k+1}(\tau_{\le k}V)=0$ and $\tau_{\le k} V\to V$ induces an isomorphism on $H^{\le k}$.
		\item $H^{\le k-1}(\tau_{\ge k}V)=0$ and $V\to \tau_{\ge k}V$ induces an isomorphism on $H^{\ge k}$.
	\end{enumerate}
\end{lemma}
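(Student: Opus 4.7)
My strategy is direct unpacking of the definitions of $\tau_{\le k}$, $H^i_A$, and $H^j_{\widetilde{BC}}$, organized degree by degree. Part (ii) follows from a parallel argument with quotients in place of subspaces (and a swap of the roles of ``landing in a specified subspace'' versus ``factoring through a specified quotient''), so I concentrate on part (i).

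For the vanishing $H^{\ge k+1}(\tau_{\le k}V)=0$, all cohomology in degrees $\ge k+2$ is automatically zero because $(\tau_{\le k}V)^i=0$ there, so the only genuinely nontrivial case is $H^{k+1}_A(\tau_{\le k}V)$. By definition every $x \in (\tau_{\le k}V)^{k+1}$ can be written as $x=\del a+\delbar b$ for some $a,b\in V^k$, and the constraints $\del x=0=\delbar x$ force $\del\delbar a=0=\del\delbar b$, so $a,b \in (\ker\del\delbar)^k = (\tau_{\le k}V)^k$; hence $x$ is already a boundary inside the truncated complex.

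For the isomorphism on $H^{\le k}$, recall that $H^i_A$ depends only on degrees $\{i-1,i,i+2\}$ of $V$ and $H^j_{\widetilde{BC}}$ only on $\{j-2,j-1,j,j+1\}$. Since $\tau_{\le k}V$ agrees with $V$ outside $\{k,k+1\}$, the induced map is an equality in all degrees except the four boundary cases $H^{k-1}_A, H^k_A, H^k_{\widetilde{BC}}, H^{k+1}_{\widetilde{BC}}$. Each of these is checked by verifying that the relevant kernels and images inside $\tau_{\le k}V$ reproduce those of $V$: the relation $\del\delbar\del=0=\del\delbar\delbar$ ensures that $\del V^{k-1}$ and $\delbar V^{k-1}$ land automatically in $(\ker\del\delbar)^k=(\tau_{\le k}V)^k$, and the closedness argument from the vanishing step shows that every element of $(\tau_{\le k}V)^{k+1}$ belongs to $\im\del+\im\delbar$ computed inside the truncation; the subtracted denominators $\im\del\delbar$ involve only degree $k-1$, which is untouched.

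The main obstacle, if any, is purely bookkeeping: the boundary degrees $k$ and $k+1$ interact with four distinct cohomology conditions, each defined as an intersection of kernels and images, and each must be matched across the inclusion $\tau_{\le k}V\hookrightarrow V$. A conceptually cleaner but longer alternative would be to invoke the decomposition of bicomplexes into squares and zig-zags recalled just before the lemma and verify the lemma on each indecomposable type, but this essentially repackages the same case analysis.
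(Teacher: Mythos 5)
Your proof is correct and matches the paper's approach, which simply dismisses the lemma as a ``straightforward verification'': you unpack $H^k = H^k_A \oplus H^{k+1}_{\widetilde{BC}}$, track which degrees each group depends on, and use the single nontrivial observation that any presentation $x=\del a+\delbar b$ of a $\del$-, $\delbar$-closed element in degree $k+1$ forces $a,b\in\ker\del\delbar$, which simultaneously gives the vanishing of $H^{k+1}_A(\tau_{\le k}V)$ and the surjectivity needed for $H^{k+1}_{\widetilde{BC}}$. One tiny bookkeeping slip: the full list of boundary cases is $H^{k-2}_A, H^{k-1}_A, H^k_A, H^{k-1}_{\widetilde{BC}}, H^k_{\widetilde{BC}}, H^{k+1}_{\widetilde{BC}}$ (you omitted the first and fourth), but those two are handled verbatim by your remark that $\del\delbar V^{k-2}$, $\del V^{k-1}$, $\delbar V^{k-1}$ automatically land in $(\ker\del\delbar)^k$, so this does not affect the argument.
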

\begin{proof}
	Straightforward verifications.
\end{proof}

\begin{lemma}[connectivity of tensor product]\label{connectivity}
	Let $V$ be $(k-1)$-connected and $W$ be $(l-1)$-connected, then $V\otimes W$ is $(k+l-1)$-connected. 
\end{lemma}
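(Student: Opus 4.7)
The plan is to reduce the claim to an elementary total-degree bound on the tensor product of minimal bicomplexes. First I would apply the structural decomposition to write $V\cong HV\oplus C_V$ and $W\cong HW\oplus C_W$, where $C_V, C_W$ are direct sums of squares. Distributing yields
\[
V\otimes W\cong (HV\otimes HW)\oplus(HV\otimes C_W)\oplus(C_V\otimes HW)\oplus(C_V\otimes C_W).
\]
The three summands containing a $C$-factor are contractible, since tensoring any bicomplex with a square produces a direct sum of squares; this key structural input I would either cite from \cite{stelzig2021structure} or verify by hand on indecomposable zig-zag factors. The upshot is a quasi-isomorphism $V\otimes W\simeq HV\otimes HW$.

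Next I would translate the connectedness hypothesis into a total-degree statement about the minimal model $HV$. From the identity $H^jV = H^j_AV\oplus H^{j+1}_{\widetilde{BC}}V$ together with $H^{\le k-1}V=0$ one reads off $H^j_AV=0$ for $j\le k-1$ and $H^j_{\widetilde{BC}}V=0$ for $j\le k$. Under the minimal-model identification $HV=\bigoplus_j H^jV$, the total-degree-$n$ summand of $HV$ is $H^n_AV\oplus H^n_{\widetilde{BC}}V$, which therefore vanishes for $n\le k-1$. Hence $HV$ is concentrated in total degrees $\ge k$, and the same reasoning shows $HW$ is concentrated in total degrees $\ge l$.

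Consequently $HV\otimes HW$ is concentrated in total degrees $\ge k+l$, so $H^{\le k+l-1}(HV\otimes HW)=0$ holds trivially, and transporting back through the quasi-isomorphism gives $H^{\le k+l-1}(V\otimes W)=0$; by \Cref{defn:connectedness}(iii) this is precisely $(k+l-1)$-connectedness. The one step I expect to demand real care is the assertion that a square tensored with any bicomplex splits as a direct sum of squares: this is a finite combinatorial check against the zig-zag shapes ($\bullet$, $\Lpic$, $\revLpic$, and longer ones), and is the structural input on which the whole reduction hinges — without it the three ``mixed'' summands above need not disappear up to quasi-isomorphism.
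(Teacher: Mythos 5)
Your proof is correct, but it reaches the target by a different route than the paper. The paper simply replaces $V$ by $\tau_{\ge k}V$ and $W$ by $\tau_{\ge l}W$ (which are quasi-isomorphic replacements by the definition of connectedness, item (ii)), observes that the replacements are concentrated in total degrees $\ge k$ and $\ge l$ respectively, and concludes immediately that the tensor product sits in degrees $\ge k+l$. You instead invoke the full structure theorem (every bicomplex is a direct sum of squares and zig-zags) to get $V\simeq HV\oplus C_V$, then dispose of the mixed summands by the absorbing property of squares under tensor. Both arguments hinge on the same elementary total-degree bound once you have a suitable replacement in hand; the difference is which replacement you use and how you justify it. The paper's truncation route is leaner: it only needs the truncation functor and its basic properties, whereas yours needs the structure theorem plus the observation (which you correctly flag as the crux) that a square tensored with an arbitrary bicomplex is contractible. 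That observation does hold and is quick to prove — a square carries a pair of contracting homotopies $s,\bar s$ with $\del s+s\del=1$, $\delbar\bar s+\bar s\delbar=1$, $\del\bar s+\bar s\del=0$, $\delbar s+s\delbar=0$, and $s\otimes 1$, $\bar s\otimes 1$ give contracting homotopies on the tensor product — but the paper avoids having to spell it out. One small thing you glossed over that both arguments quietly share: you need to know that the quasi-isomorphism $V\simeq HV$ induces a quasi-isomorphism after tensoring with $W$. In your setup this is exactly what the "squares are absorbing" step buys you (the cone of $HV\hookrightarrow V$ is a sum of squares, so tensoring with $W$ keeps it contractible), so your version actually makes this invariance more visible than the paper's terse "replacing if necessary."
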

%\begin{proof}
%A proof is given using tensor products of zig-zags in \Cref{tensor-product}.
%\end{proof}
\begin{proof}
	Replacing $V$ by $\tau_{\ge k}V$ if necessary, we may assume $V$ is concentrated in degrees $\ge k$. Similarly, we can assume $W$ is concentrated in degrees $\ge l$ and therefore $V\otimes W$ is concentrated in degrees $\ge k+l$. This in turn implies $V\otimes W$ is $(k+l-1)$-connected.
\end{proof}
\begin{remark}
	Tensor products of zig-zags can be explicitly described, see \Cref{tensor-product}.
\end{remark}
\begin{corollary}[wedge product]\label{wedge-product} Let $V$ be a bicomplex. If $V$ is $(k-1)$-connected, then $\bigwedge ^n V$ is $(nk-1)$-connected.
\end{corollary}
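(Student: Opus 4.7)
The plan is to deduce the wedge-product statement from \Cref{connectivity} by realizing $\bigwedge^n V$ as a direct summand of the $n$-fold tensor power $V^{\otimes n}$. This is natural because we are working over a field $\KK$ of characteristic zero, so the standard antisymmetrization trick applies.

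First, I would iterate \Cref{connectivity}: an easy induction on $n$ shows that $V^{\otimes n}$ is $\bigl((nk-1)\bigr)$-connected. Indeed, assuming $V^{\otimes (n-1)}$ is $((n-1)k-1)$-connected, applying \Cref{connectivity} to $V^{\otimes (n-1)}\otimes V$ yields connectivity $((n-1)k-1)+k = nk-1$, which closes the induction (the base case $n=1$ is the hypothesis).

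Next, I would exhibit $\bigwedge^n V$ as a direct summand of $V^{\otimes n}$. The symmetric group $S_n$ acts on $V^{\otimes n}$ by permutation with the Koszul sign rule (using the total degree coming from the cbba structure); because both differentials $\del,\delbar$ act as derivations and the signs are built from the degree grading, this action is by bicomplex automorphisms, hence commutes with $\del$ and $\delbar$. Since $\KK$ has characteristic zero, the antisymmetrization idempotent
\[
\pi_n \;=\; \frac{1}{n!}\sum_{\sigma\in S_n}\mathrm{sgn}(\sigma)\,\sigma
\]
is a bicomplex endomorphism of $V^{\otimes n}$ with image $\bigwedge^n V$, giving a direct sum decomposition $V^{\otimes n}\cong \bigwedge^n V \oplus \ker\pi_n$ in the category of bicomplexes.

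Finally, connectivity passes to direct summands: the cohomology functors $H^k$ are additive, so from $H^{\le nk-1}(V^{\otimes n})=0$ and the decomposition above we conclude $H^{\le nk-1}\bigl(\bigwedge^n V\bigr)=0$, which by \Cref{defn:connectedness} means $\bigwedge^n V$ is $(nk-1)$-connected. The only non-routine point is checking that the $S_n$-action commutes with $\del$ and $\delbar$ with the correct signs, but this is dictated by the cbba sign convention and is the same verification that underlies the construction of $\bigwedge^n V$ itself as an object of the category of bicomplexes.
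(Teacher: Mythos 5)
Your proposal takes precisely the paper's route: iterate \Cref{connectivity} to get $V^{\otimes n}$ being $(nk-1)$-connected, then realize $\bigwedge^n V$ as a bicomplex direct summand of $V^{\otimes n}$ and observe that cohomology (hence connectivity) passes to summands. One small correction to your displayed idempotent: in this paper $\bigwedge V$ denotes the \emph{free graded-commutative} algebra on $V$, so once you let $S_n$ act with the Koszul sign rule the projector onto $\bigwedge^n V$ is $\frac{1}{n!}\sum_{\sigma\in S_n}\sigma$ with no extra $\mathrm{sgn}(\sigma)$; tacking on $\mathrm{sgn}(\sigma)$ on top of the Koszul action projects onto the complementary signed isotypic piece (for instance, for $v$ of odd total degree your $\pi_2$ fixes $v\otimes v$, which should vanish in $\bigwedge^2 V$). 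This does not affect the argument — any $S_n$-isotypic piece of $V^{\otimes n}$ is still a bicomplex direct summand, so the connectivity bound goes through once the formula is corrected.
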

\begin{proof}
This follows from that $\bigwedge^n V$ is a direct summand of $V^{\otimes n}$ as bicomplex so that its cohomology is a direct summand of $H(V^{\otimes n})$. Since $V^{\otimes n}$ is $(nk-1)$-connected by the above proposition, so is $\bigwedge^n V$.
\end{proof}

\subsection{Distinguished triangle}
The \textbf{shift operators} on bicomplexes are given by
\begin{align*}
	V[1]&:= \Lpic\otimes V\\
V[-1]&:= \revLpic\otimes V
\end{align*}
which are homotopy inverses; indeed the natural inclusion $\bullet\to \Lpic\otimes\revLpic$, mapping onto the tensor product of the corners, is a quasi-isomorphism. Here the conners of $\Lpic$ and $\revLpic$ are in degrees $(-1,-1)$, $(1,1)$ respectively.
%\begin{proposition}\label{t-structure}
%	$V$ is $k$-connected iff $V[1]$ is $(k-1)$-connected iff $V[-1]$ is $(k+1)$-connected.
%\end{proposition}
%
%\begin{proof}
%	This follows from \Cref{connectivity}.
%\end{proof}
For a map $f: V\to W$, its \textbf{mapping cone} is defined by
\[
\cone(f):=\coker(V\to W\oplus\squarepic\otimes V)
\]
in which the upper right conner of $\squarepic$ is in degree (0,0) and the map $V\to W\oplus \squarepic \otimes V$ is given by $f$ and the inclusion $\bullet\to \squarepic$ to the upper right corner. Under the linear isomorphism $\cone(f)=W\oplus V[1]$, the differentials are
\[
	\del\left(w, \begin{tikzcd}[row sep=tiny, column sep=tiny]
	c \\
	a & b
	\arrow[no head, from=2-1, to=1-1]
	\arrow[no head, from=2-1, to=2-2]
\end{tikzcd}\right)
=
\left(\del w-fc,  \begin{tikzcd}[row sep=tiny, column sep=tiny]
	-\del c \\
	\del a & a-\del b
	\arrow[no head, from=2-1, to=1-1]
	\arrow[no head, from=2-1, to=2-2]
\end{tikzcd}\right)
\]
\[
\delbar\left(w, \begin{tikzcd}[row sep=tiny, column sep=tiny]
	c \\
	a & b
	\arrow[no head, from=2-1, to=1-1]
	\arrow[no head, from=2-1, to=2-2]
\end{tikzcd}\right)
=
\left(\delbar w+fb,  \begin{tikzcd}[row sep=tiny, column sep=tiny]
	a-\delbar c \\
	\delbar a & -\delbar b
	\arrow[no head, from=2-1, to=1-1]
	\arrow[no head, from=2-1, to=2-2]
\end{tikzcd}\right)
\]
A \textbf{distinguished triangle} is one quasi-isomorphic to
\[
V\xrightarrow{f} W\to \cone(f).
\]
A distinguished triangle can be extended in both directions by applying shift operators
\[
\dots\to W[-1]\to \cone(f)[-1]\to V\xrightarrow{f} W\to \cone(f)\to V[1]\to W[1]\to\cdots
\]
and consequently induce long exact sequences in homotopy, see \cite[Corollary 11.2]{stelzig2025pluripotential}.

\begin{definition}
	A map $f: V\to W$ is said to be \textbf{$k$-connected} if $\cone(f)$ is $(k-1)$-connected.
\end{definition}
\begin{lemma}
	$f: V\to W$ is $k$-connected iff $H_A^{\le k-1}(f)$ is surjective and $H_{BC}^{\le k+1}(f)$ is injective.
\end{lemma}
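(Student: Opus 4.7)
The plan is a bidegree-wise analysis of the long exact sequence in Aeppli cohomology attached to the distinguished triangle $V\to W\to\cone(f)\to V[1]\to W[1]$. First, I would invoke \Cref{defn:connectedness}(iv) to reformulate: $f$ is $k$-connected iff $\cone(f)$ is $(k-1)$-connected iff $H_A^{\le k-1}(\cone f)=0$. For each fixed bidegree $(p,q)$, applying the cohomological functor $H_A^{(p,q)}$ to the triangle produces an exact sequence
\[
H_A^{(p,q)}(V)\to H_A^{(p,q)}(W)\to H_A^{(p,q)}(\cone f)\to H_A^{(p,q)}(V[1])\to H_A^{(p,q)}(W[1]),
\]
from which $H_A^{(p,q)}(\cone f)=0$ is equivalent to the surjectivity of $H_A^{(p,q)}(f)$ together with the injectivity of $H_A^{(p,q)}(f[1])$.

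The crucial ingredient is a natural identification
\[
H_A^{(p,q)}(V[1])=H_A^{(p,q)}(\Lpic\otimes V)\;\cong\;H_{BC}^{(p+1,q+1)}(V),
\]
which reflects the fact that tensoring with $\Lpic$ converts Bott--Chern classes of $V$ into Aeppli classes of $V[1]$ with a bidegree shift by $(-1,-1)$, the position of the corner of $\Lpic$. I would verify this by decomposing $V$ into squares and zig-zag summands via the structure theorem and computing $H_A(\Lpic\otimes Z)$ for each basic zig-zag type $Z$: for $Z=\bullet$ at bidegree $(a,b)$ one finds $H_A(\Lpic\otimes Z)=\KK$ at $(a-1,b-1)$, matching $H_{BC}(Z)=\KK$ at $(a,b)$ translated by $(-1,-1)$; the analogous computations work for $Z\in\{\Lpic,\revLpic,\dots\}$ and extend to longer zig-zags. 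By naturality in $V$, the injectivity of $H_A^{(p,q)}(f[1])$ is then the injectivity of $H_{BC}^{(p+1,q+1)}(f)$.

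Finally, letting $(p,q)$ range over all bidegrees with $p+q\le k-1$, so that $(p+1,q+1)$ ranges over all bidegrees of total degree $\le k+1$, the vanishing $H_A^{\le k-1}(\cone f)=0$ unpacks exactly into surjectivity of $H_A^{\le k-1}(f)$ together with injectivity of $H_{BC}^{\le k+1}(f)$, yielding the claim. The main obstacle is establishing the identification $H_A(\Lpic\otimes V)\cong H_{BC}(V)[-1,-1]$ in full generality; while the case analysis over basic zig-zag summands is elementary once the decomposition theorem is in hand, a conceptual proof would proceed via the bicomplex Künneth theorem, which precisely tracks the mixing of the various bicomplex cohomology theories under tensor products.
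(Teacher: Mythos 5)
Your proof is essentially correct and follows the same route as the paper: both arguments hinge on the five-term exact sequence in Aeppli and Bott--Chern cohomology arising from the distinguished triangle $V\to W\to\cone(f)$, plus the standard five-lemma-style bookkeeping showing that vanishing of $H_A^{\le k-1}\cone(f)$ is equivalent to surjectivity of $H_A^{\le k-1}(f)$ and injectivity of $H_{BC}^{\le k+1}(f)$. The paper simply cites this long exact sequence (Corollary 11.2 of Stelzig), which already encodes the shift identification $H_A^i(V[1])\cong H_{BC}^{i+2}(V)$; you reconstruct that identification by hand via the zig-zag decomposition. That verification is correct (the sink nodes of any bounded zig-zag $Z$ detected by $H_{BC}$ become, after tensoring with $\Lpic$ placed with corner at $(-1,-1)$, exactly the sources detected by $H_A$ of a new zig-zag shifted by $(-1,-1)$, as one can read off from $A_i\otimes A_j\equiv A_{i+j}$ and its $B,C$-analogues modulo squares, together with the K\"unneth placement of Dolbeault groups).

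One small point worth flagging: you write ``applying the cohomological functor $H_A^{(p,q)}$ to the triangle,'' but the fact that $H_A$ takes distinguished triangles of bicomplexes to long exact sequences is itself a nontrivial statement in this setting (bicomplex cones are not ordinary cones, and $H_A$ is not the cohomology of a total complex). The paper addresses this by citation; your argument tacitly uses it without justification. If you intend to prove the exactness yourself via the zig-zag/square decomposition you invoke for the shift computation, you should say so, since that decomposition argument would indeed establish exactness componentwise; as written, the exactness is a gap that must be filled either by citation or by that extra step.
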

\begin{proof}
	This follows from the long exact sequence \cite[Corollary 11.2]{stelzig2025pluripotential} derived from the distinguished triangle $V\to W\to\cone(f)$:
	\[
	\cdots\to H^i_A V\to H^i_A W\to H^i_A\cone(f)\to H^{i+2}_{BC} V \to H^{i+2}_{BC} W\to\cdots
	\]
\end{proof}
\begin{remark}\label{Aeppli of shift}
	Above we used the isomorphism $H_{A}^{p,q} (V[1])\equiv H_{BC}^{p+1,q+1} (V)$ and thus $H_A^i (V[1])\equiv H^{i+2}_{BC} (V)$. 
\end{remark}
\begin{remark}
	Connectivity is controlled by $G^k:=H^k_{\widetilde{A}}+H^{k+1}_{BC}$. Indeed a map $f$ is $k$-connected iff $G^i(f)$ is an isomorphism for $i<k$ and surjective for $i=k$.
\end{remark}

We shall encounter the following examples of distinguished triangles.
\begin{example}[distinguished triangle]\label{special-triangle}
	\begin{enumerate}[(i)]
		\item Let $0\to V\xrightarrow{f} W\to \coker(f)\to 0$ be a short exact sequence of bicomplexes. Then $V\to W \to \coker(f)$ is a distinguished triangle. To see this, the projection $W\oplus \squarepic\otimes V$ onto $W$ induces a short exact sequence
		\[
		0\to \squarepic\otimes V\to \cone(f)\to \coker(f)\to 0
		\]
		whose kernel is contractible.
		\item Given $U\xrightarrow{f} V\xrightarrow{g} W$, we have a distinguished triangle
		\[
		\cone(f)\to \cone(g\circ f)\to \cone(g).
		\]
		Indeed, if both $f,g$ are injective, then this follows from the previous example by noting
		\[
		0\to \coker(f)\to \coker(g\circ f)\to \coker(g)\to 0
		\]
		is short exact. In general, each map $g: V\to W$ is equivalent to an injection by replacing $W$ with $W\oplus \squarepic\otimes V$.
		\item Let $V[-1]\xrightarrow{\Phi} W$ be a bicomplex map. Then the quasi-isomorphism $\bullet\to \Lpic\otimes\revLpic$ induces a quasi-isomorphism between
	\[
	\cone(\Phi)_{red}=W\oplus V,\ \text{with }
	\del=\begin{pmatrix}
		\del_W & 0\\
		\phi & \del_V
	\end{pmatrix}, 
	\delbar=\begin{pmatrix}
		\delbar_W & 0\\
		\bar\phi & \delbar_V
	\end{pmatrix}
	\]
	and $\cone(\Phi)$. Here if we write $V[-1]=\del\otimes V+\delbar\otimes V+\del\delbar\otimes V$, then $\phi,\bar\phi$ are the restrictions of $\Phi$ onto $\del\otimes V$ and $\delbar\otimes V$ respectively. In particular
	\[
	V[-1]\xrightarrow{\Phi} W\to \cone(\Phi)_{red}
	\]
	is a distinguished triangle.
	\item $\tau_{\le k}V\to V\to \tau_{\ge k+1}V$ is distinguished. To see this, we argue that cokernel of $\tau_{\le k}V\to V$ is quasi-isomorphic to $\tau_{\ge k+1}V$. Indeed, we have a short exact sequence
	\[
	0\to K\to \coker(\tau_{\le k}V\to V)\to \tau_{\ge k+1}V\to 0
	\]
	where $K$ is the bicomplex with
	\[
	K^i=\begin{cases}
		V/\ker \del\delbar & i=k\\
		\ker\del\cap\ker\delbar/\ker\del\cap\ker\delbar\cap(\im\del+\im\delbar) & i=k+1\\
		\im \del\delbar & i=k+2\\
		0 &\text{otherwise}
	\end{cases}
	\]
	which is contractible.
	\end{enumerate}
\end{example}

\section{Homotopy theory of cbba}
A cbba is a (graded) commutative unital algebra object in the category of bicomplexes. Henceforth all bicomplexes and cbba's are concentrated in bidegrees $(i,j)$ with $i\ge 0, j\ge 0$.

An augmentation of a cbba $\mathcal{A}$ is a cbba map $\epsilon:\mathcal{A}\to \KK$, the augmentation ideal is denoted $\mathcal{A}^+$. If $\mathcal{A}$ is connected, i.e. $\mathcal{A}^0=\KK$, then $\mathcal{A}^+=\mathcal{A}^{>0}$. 

Throughout this section \textbf{all cbba's are augmented unless otherwise stated}.

\subsection{Linear Hirsch extension}
\begin{definition}\label{defn:hirsch-extension}
Let $(\mathcal{A},\del_\mathcal{A}, \delbar_{\mathcal{A}})$ be a cbba and $(V,\del_V,\delbar_V)$ a bicomplex. A \textbf{linear Hirsch extension} of $\mathcal{A}$ by $V$ is a pair of differentials $\del,\delbar$ on $\mathcal{A}\otimes \bigwedge V$ making it into a cbba so that 

\begin{enumerate}[(i)]
	\item the natural inclusion $\mathcal{A}\to \mathcal{A}\otimes \bigwedge V$
is a cbba map, and
\item the differentials on $V$ have the form
\[
\del=\del_V+\phi+\Theta,\  \delbar=\delbar_V+\bar\phi+\bar\Theta
\]
where $\phi,\bar\phi:V\to \mathcal{A}$ and $\Theta,\bar\Theta:V\to \mathcal{A}^+\otimes V$ are degree $+1$ linear maps.
\end{enumerate}
Note that the differentials on $\mathcal{A}\otimes \bigwedge V$ are uniquely determined by the conditions above.

An \textbf{algebra automorphism of $\mathcal{A}\otimes \bigwedge V$ relative to $\mathcal{A}$ and $V$} is an algebra automorphism that
\begin{enumerate}[(i)]
	\item restricts to identity on $\mathcal{A}$, and
	\item sends $x\in V$ to $x+H(x)$ where $H: V\to \mathcal{A}\otimes V$ is a degree $0$ linear map.
\end{enumerate}

Two Hirsch extensions $(\del,\delbar)$ and $(\del',\delbar')$ are isomorphic if they are differed by conjugating an algebra automorphism $\sigma$ relative to $\mathcal{A}$ and $V$, i.e. $(\del',\delbar')=\sigma^{-1}(\del,\delbar)\sigma$.
\end{definition}

Let us analyze the consequences of $\del^2=\delbar^2=(\del\pm\delbar)^2=0$. Starting with $\del^2=0$, for $x\in V$ we have:
\begin{align*}
	\del^2 x &= \del(\del_V x+\phi x+\Theta x)\\
	&=(\del_V+\phi+\Theta)\del_V x+\del_\mathcal{A}\phi x+ \del(\Theta x)\\
	&= \del_V^2 x+(\del_\mathcal{A}\phi x+\phi \del_Vx)+\Theta (\del_V x)+\del(\Theta x).
\end{align*}
All the terms, except the last one, have clear types: $\del_V^2 x\in V$ (which is zero), $(\phi \del_Vx+\del_\mathcal{A}\phi x)\in \mathcal{A}$ and $\Theta (\del_V x)\in\mathcal{A}\otimes V$. To understand the last term $\del(\Theta x)$, we note\footnote{Koszul sign convention understood, $(f\otimes g) (a\otimes b)=(-1)^{|g||a|}fa\otimes gb$.} $\del=\del_{\mathcal{A}}\otimes 1+1\otimes\del$ on $\mathcal{A}\otimes V$ and thus
\begin{align*}
	\del(\Theta x)&=(\del_{\mathcal{A}}\otimes 1+1\otimes\del)\Theta x\\
	&=(\del_\mathcal{A}\otimes 1)\Theta x+(1\otimes\del_V)\Theta x+(1\otimes \phi)\Theta x+(1\otimes\Theta)\Theta x\\
	&=\partial_{\mathcal{A}\otimes V}\Theta x+(1\otimes \phi)\Theta x+(1\otimes \Theta)\Theta x.
\end{align*}
Then, by analyzing types and degrees of the terms, we get that
\begin{enumerate}[(i)]
	\item $\del_V^2=0$ (which is given),
	\item $\del_\mathcal{A}\phi+\phi\del_V+(1\otimes \phi)\Theta=0$,
	\item $\del_{\mathcal{A}\otimes V}\Theta+\Theta\del_V+(1\otimes \Theta)\Theta=0$.
\end{enumerate}
To unwrap these equations, \textit{define} an operator $\del_\Theta$ on $Hom(V,\mathcal{A})$ by
\[
\del_\Theta\psi:=\partial_\mathcal{A}\psi-(-1)^{|\psi|}\psi\partial_V-(-1)^{|\psi|}(1\otimes\psi)\Theta\text{, for $\psi\in Hom(V,\mathcal{A})$.}
\]
Then the equation (ii) above becomes
\[
\del_\Theta\phi=0.
\]
And the equation (iii) above is the Maurer-Cartan equation for $\partial_\Theta$, showing that $\partial_\Theta$ is a deformation of the ordinary differential $\psi\mapsto \partial_\mathcal{A}\psi-(-1)^{|\psi|}\psi\partial_V$ on $Hom(V,\mathcal{A})$. In other words, (iii) is equivalent to
\[
\partial_\Theta^2=0.
\]
The same consideration applies to $(\delbar,\bar\Theta)$ and $(\del\pm\delbar,\Theta\pm\bar\Theta)$ as well, and thus we have:
\begin{enumerate}[(i)]
	\item $\del_\Theta^2=0$, $\del_\Theta\phi=0$;
	\item $\delbar_{\bar\Theta}^2=0$, $\delbar_{\bar\Theta}\bar\phi=0$;
	\item $\del_\Theta\bar\del_\Theta+\delbar_{\bar\Theta}\del_\Theta=0$, $\del_\Theta\bar\phi+\delbar_{\bar\Theta}\phi=0$.
\end{enumerate}
In other words, $Hom(V,\mathcal{A})$ equipped with $\del_\Theta,\delbar_\Theta$ becomes a bicomplex and that $(\phi,\bar\phi)$ define a bicomplex map $\Phi$ from $\revLpic$ into $Hom(V,\mathcal{A})$:
\[\Phi:=\left(\begin{tikzcd}[row sep=tiny, column sep=tiny]
	\bar\phi \ar[r]& {\del_\Theta \bar \phi=-\bar\del_{\bar\Theta}\phi} \\
	& {\phi}\ar[u]
\end{tikzcd}\right)\]
If we identify $Hom(\revLpic,Hom(V,\mathcal{A}))$ with $Hom(\revLpic\otimes V,\mathcal{A})$, then the actions of $\del_\Theta, \delbar_{\bar\Theta}$ translate to
\[
\left(\begin{tikzcd}[row sep=tiny, column sep=tiny]
	f & h \\
	& g
	\arrow[no head, from=1-1, to=1-2]
	\arrow[no head, from=2-2, to=1-2]
\end{tikzcd}\right)
\xrightarrow{\del_\Theta}
\left(\begin{tikzcd}[row sep=tiny, column sep=tiny]
	h-(-1)^{|h|}\del_\Theta f & (-1)^{|h|+1}\del_\Theta h \\
	& (-1)^{|h|+1}\del_\Theta g
	\arrow[no head, from=1-1, to=1-2]
	\arrow[no head, from=2-2, to=1-2]
\end{tikzcd}\right)
\]
and
\[
\left(\begin{tikzcd}[row sep=tiny, column sep=tiny]
	f & h \\
	& g
	\arrow[no head, from=1-1, to=1-2]
	\arrow[no head, from=2-2, to=1-2]
\end{tikzcd}\right)
\xrightarrow{\delbar_{\bar\Theta}}
\left(\begin{tikzcd}[row sep=tiny, column sep=tiny]
	(-1)^{|h|+1}\delbar_{\bar\Theta} f & (-1)^{|h|+1}\delbar_{\bar\Theta} h \\
	& -h-(-1)^{|h|}\delbar_{\bar\Theta} g
	\arrow[no head, from=1-1, to=1-2]
	\arrow[no head, from=2-2, to=1-2]
\end{tikzcd}\right)
\]
With these understood, we see that
\begin{enumerate}[(i)]
	\item $\del_\Theta$,$\delbar_{\bar\Theta}$ define differentials on $Hom(V[-1],\mathcal{A})$ making it into a bicomplex, and
	\item $\phi,\bar\phi$ define a map $\Phi: V[-1]\to \mathcal{A}$ satisfying $\del_\Theta\Phi=0, \delbar_\Theta\Phi=0$.
\end{enumerate}

\begin{definition}
	We call $\Theta,\bar\Theta: V\to \mathcal{A}\otimes V$ satisfying $\del_\Theta^2=\bar\del_{\bar\Theta}^2=\del_\Theta\bar\del_{\bar\Theta}+\del_\Theta\bar\del_{\bar\Theta}=0$ a \textbf{commuting pair of local systems} on $\mathcal{A}$. Note that $\Theta,\bar\Theta$ can be viewed as matrices with coefficients in $\mathcal{A}$, so we also call them \textbf{twisting coefficients}.
	
	We define \textbf{twisted homotopy} $[V[-1], \mathcal{A}]_{\Theta,\bar\Theta}$ to be the Bott-Chern cohomology of $Hom(V[-1],\mathcal{A})$ with respect to $\del_\Theta,\delbar_{\bar\Theta}$, i.e.
	\[
	[V[-1], \mathcal{A}]_{\Theta,\bar\Theta}:=\frac{\Phi \text{ s.t. } \del_\Theta\Phi=0,\delbar_{\bar\Theta}\Phi=0}{\Phi=\del_\Theta\delbar_{\bar\Theta}\Psi \text{ for some }\Psi}.
	\]
	Here the actions of $\del_{\Theta},\delbar_{\bar\Theta}$ are the ones described above. If $\Theta=\bar\Theta=0$, we recover the ordinary notion of homotopy \cite{stelzig2025pluripotential}.
\end{definition}

\begin{example}[Non-abelian Hodge correspondence \cite{simpson1992higgs}]
	Let $X$ be a complex manifold and $\mathcal{A}$ its de Rham algebra of complex differential forms. Let $V$ be a bicomplex (over $\CC$) concentrated in a single total degree (so $\del_V=\delbar_V=0$) and $\Theta,\bar\Theta: V\to\mathcal{A}\otimes  V$ a commuting pair of local systems, such that $\bar\Theta$ is the complex conjugate of $\Theta$. 
	On the one hand, $\nabla=d_\mathcal{A}-\Theta-\bar\Theta$ defines a (real) flat connection on $E=X\times V$. On the other hand, if we write
	\[
	\bar\Theta=\bar\Theta^{1,0}+\bar\Theta^{0,1},
	\]
	then $\bar\delta=\delbar_\mathcal{A}-\bar\Theta^{0,1}$ defines an operator $\mathcal{A}^0\otimes V\to \mathcal{A}^{0,1}\otimes V$ and $\varphi=\bar\Theta^{1,0}$ defines an operator $\mathcal{A}^0\otimes V\to \mathcal{A}^{1,0}\otimes V$. They satisfy
	\[
	\bar\delta^2=0,\ \bar\delta\varphi=0,\ \varphi\wedge\varphi=0.
	\]
	This means that $\bar\delta$ defines a holomorphic structure on $E$ and $\varphi$ defines a Higgs field.
\end{example}
\begin{question}
Is it possible to extend non-abelian Hodge correspondence to general compact complex manifolds along these lines?
\end{question}
\begin{remark}
	From the above example and the difficulty we shall see later in degree one when building minimal models, it appears to the author that the degree one information is more geometric than algebraic and perhaps should be handled by direct geometric methods. Poincar\'e defined fundamental group to be the "universal" monodromy group by means of local systems on a manifold. It is likely one can similarly consider commuting pairs of local systems on a complex manifold and directly analyze the corresponding symmetry group. This symmetry group should then be used to build the non-abelian Hodge correspondence for general compact manifolds.
\end{remark}

\begin{theorem}\label{hirsch-extension}
	A linear Hirsch extension of $\mathcal{A}$ by $V$ is determined, up to isomorphism, by
	\begin{enumerate}[(i)]
		\item a commuting pair of local systems $\Theta,\bar\Theta$ and
		\item a twisted homotopy class $[\Phi]\in[V[-1], \mathcal{A}]_{\Theta,\bar\Theta}$.
	\end{enumerate}
\end{theorem}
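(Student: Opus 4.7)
The preceding analysis carries most of the work. From any linear Hirsch extension it extracts a commuting pair of local systems $(\Theta,\bar\Theta)$ together with a bicomplex cocycle $\Phi:V[-1]\to\mathcal{A}$ in $(Hom(V[-1],\mathcal{A}),\del_\Theta,\delbar_{\bar\Theta})$; conversely, given any such data $(\Theta,\bar\Theta,\Phi)$, prescribing $\del x=\del_V x+\phi x+\Theta x$ and $\delbar x=\delbar_V x+\bar\phi x+\bar\Theta x$ on generators $x\in V$ and extending by the Leibniz rule defines a cbba structure on $\mathcal{A}\otimes\bigwedge V$. The constraints $\del_\Theta^2=\delbar_{\bar\Theta}^2=\del_\Theta\delbar_{\bar\Theta}+\delbar_{\bar\Theta}\del_\Theta=0$ and $\del_\Theta\Phi=\delbar_{\bar\Theta}\Phi=0$ encode exactly what is needed for $\del^2=\delbar^2=(\del+\delbar)^2=0$ on generators, and the Leibniz rule propagates these identities to the full algebra. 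So existence is immediate; what remains is to classify such data up to the isomorphism relation of \Cref{defn:hirsch-extension}.

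For the classification I would fix $(\Theta,\bar\Theta)$ and analyse how a Hirsch automorphism $\sigma(x)=x+H(x)$ preserving it acts on $\Phi$. Decomposing $\mathcal{A}\otimes V=(\mathcal{A}^0\otimes V)\oplus(\mathcal{A}^+\otimes V)=V\oplus(\mathcal{A}^+\otimes V)$, the relevant automorphisms for parametrising changes of $\Phi$ alone are those with $H$ factoring as a degree-$0$ map $V\to\mathcal{A}$; the other components would either reparametrise the bicomplex structure of $V$ or gauge-transform $(\Theta,\bar\Theta)$. For such $H$, a direct computation of $\sigma^{-1}\del\sigma$ on $x\in V$, using $\del(a\otimes v)=\del_\mathcal{A} a\otimes v+(-1)^{|a|}a\otimes\del v$ together with the defining identity of $\del_\Theta$, yields
\[
\phi'=\phi+\del_\Theta H,\qquad\bar\phi'=\bar\phi+\delbar_{\bar\Theta} H,
\]
while $(\Theta',\bar\Theta')=(\Theta,\bar\Theta)$. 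Reassembling $(\phi',\bar\phi')$ into $\Phi'$, one verifies $\Phi'-\Phi=\pm\del_\Theta\delbar_{\bar\Theta}\Psi$, where $\Psi\in Hom(V[-1],\mathcal{A})$ is the degree-$(-2)$ element having $H$ in the corner slot and zero in the other two; hence $[\Phi']=[\Phi]$ in twisted Bott-Chern cohomology.

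Conversely, any prescribed $H:V\to\mathcal{A}$ defines such a $\sigma$ and realises the corresponding coboundary, so twisted Bott-Chern classes are in bijection with isomorphism classes of Hirsch extensions having the given $(\Theta,\bar\Theta)$. The main technical obstacle is the bookkeeping of signs and Koszul conventions in the computation of $\sigma^{-1}\del\sigma$, together with checking that the corner entry $\del_\Theta\bar\phi=-\delbar_{\bar\Theta}\phi$ transforms consistently under the reassembly of $\Phi'$; beyond this no conceptually new ingredient is needed, since the operators $\del_\Theta$ and $\delbar_{\bar\Theta}$ were introduced precisely so that these transformation rules become coboundary equations in the twisted complex.
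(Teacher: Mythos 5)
Your overall strategy matches the paper: compute $\sigma^{-1}\del\sigma$ for an automorphism $\sigma(x)=x+H(x)$, identify the effect on $\phi$ as $\phi\mapsto\phi+\del_\Theta H$, and recognize this as a Bott--Chern coboundary. The forward direction (isomorphic extensions give cohomologous $\Phi$) is handled correctly.

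However, your converse direction has a genuine gap. You write that ``any prescribed $H:V\to\mathcal{A}$ defines such a $\sigma$ and realises the corresponding coboundary, so twisted Bott-Chern classes are in bijection with isomorphism classes.'' This only shows that the orbit of $\Phi$ under automorphisms contains all elements $\Phi+\del_\Theta\delbar_{\bar\Theta}\Psi_H$ where $\Psi_H$ is of the \emph{special} form $(0,H,0)$, i.e.\ has nonzero entry only in the corner slot. But the twisted homotopy class $[\Phi]$ allows coboundaries $\del_\Theta\delbar_{\bar\Theta}\Psi$ for \emph{arbitrary} degree-$(-2)$ elements $\Psi=(f,h,g)$ with all three slots potentially nonzero. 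To conclude the injectivity of the map $\{\text{Hirsch extensions}\}/\text{iso}\to[V[-1],\mathcal{A}]_{\Theta,\bar\Theta}$ you must show that every such coboundary is also realized by some automorphism. The paper resolves this by observing that $\del_\Theta\delbar_{\bar\Theta}(f,h,g)=\del_\Theta\delbar_{\bar\Theta}(0,H,0)$ when one sets $H=h+\del_\Theta f+\delbar_{\bar\Theta}g$; one must check this identity (using $\del_\Theta^2=\delbar_{\bar\Theta}^2=0$ and anticommutativity) and then the automorphism with this $H$ does the job. Without this reduction your argument establishes well-definedness of the classifying map but not its injectivity.

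A secondary remark: you restrict attention to $H$ landing in $\mathcal{A}\subset\mathcal{A}\otimes V$ and say the other components ``reparametrise the bicomplex structure of $V$ or gauge-transform $(\Theta,\bar\Theta)$.'' This is a sensible observation but is not carried to completion; the theorem as stated fixes both $V$ and $(\Theta,\bar\Theta)$, so you would need to argue (or at least note, as the paper implicitly does) that it suffices to consider the subgroup of relative automorphisms with $H:V\to\mathcal{A}$ for the purposes of the classification. This is consistent with the paper's proof, but spelling it out would strengthen the exposition.
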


\begin{proof}
	We have seen that every Hirsch extension gives a commuting pair of local systems and a twisted homotopy class. Now let $\sigma$ be an algebra isomorphism relative to $\mathcal{A}$ and $V$ that sends $x\in V$ to $x+H(x)$ for $H:\:V\to \mathcal{A}$. Then $\sigma^{-1}$ sends $x$ to $x-H(x)$.
A straightforward computation shows that
\[
\sigma^{-1}\del\sigma=\del_V+\phi+\del_\Theta H+\Theta.
\]
Hence conjugation by $\sigma$ yields
\[
	\phi\mapsto \phi+\del_\Theta H,\quad \Theta\mapsto\Theta,
\]
and similar changes to $\bar\phi, \bar\Theta$. This proves that
	\[
	\Phi-\sigma^{-1}\Phi\sigma=\del_\Theta\delbar_{\bar\Theta} \left(\begin{tikzcd}[row sep=tiny, column sep=tiny]
	0 & H \\
	& 0
	\arrow[no head, from=1-1, to=1-2]
	\arrow[no head, from=2-2, to=1-2]
\end{tikzcd}\right).
	\]
	Therefore isomorphic Hirsch extensions give the same local systems $(\Theta,\bar\Theta)$ and homotopic $\Phi$. Conversely if $\Phi'$ and $\Phi$ are homotopic, that is
	\[
	\Phi-\Phi'=\del_\Theta\delbar_{\bar\Theta} \left(\begin{tikzcd}[row sep=tiny, column sep=tiny]
	f & h \\
	& g
	\arrow[no head, from=1-1, to=1-2]
	\arrow[no head, from=2-2, to=1-2]
\end{tikzcd}\right).
	\]
	Then the automorphism given by $H=h+\del_\Theta f+\delbar_{\bar\Theta} g$ takes $\Phi$ to $\Phi'$ by conjugation.	
\end{proof}

\begin{definition}
	In the situation of the above theorem, we say the linear Hirsch extension is structured by $[\Phi]$ and call $[\Phi]$ the \textbf{k-invariant} of the extension.
\end{definition}

\begin{proposition}\label{extension-property}
	Let $\mathcal{A}\otimes \bigwedge V$ be a linear Hirsch extension structured by $\Phi:V[-1]\to\mathcal{A}$ with local systems $\Theta,\bar\Theta$. Then a cbba map $f: \mathcal{A}\to \mathcal{C}$ extends to $\mathcal{A}\otimes \bigwedge V$ iff  $f\Phi$ is null-homotopic with respect to local systems $f\Theta,f\bar\Theta$.
\end{proposition}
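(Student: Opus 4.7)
The plan is to reduce the extension question to the existence of a single linear map $H: V\to \mathcal{C}$ encoding $g|_V$, rewrite the compatibility of $g$ with the two differentials as a pair of equations in $Hom(V,\mathcal{C})$, and then recognize that pair as the data of a twisted null-homotopy of $f\Phi$.

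First, since $\mathcal{A}\otimes\bigwedge V$ is generated as an algebra by $\mathcal{A}$ and $V$, a cbba extension $g$ of $f$ is determined by its restriction $H:=g|_V:V\to \mathcal{C}$, and conversely any linear $H$ produces a unique multiplicative extension. Evaluating the constraint $g\del=\del g$ on a generator $x\in V$, expanding $\del x=\del_V x+\phi x+\Theta x$, and using $g(a\otimes v)=f(a)H(v)$, I would unwrap this into
\[
\del_{\mathcal{C}}H - H\del_V - (1\otimes H)(f\Theta) = f\phi.
\]
Defining $\del_{f\Theta}$ on $Hom(V,\mathcal{C})$ by the same formula that defines $\del_\Theta$ on $Hom(V,\mathcal{A})$ but with twisting coefficients $f\Theta$ (which is again a local system on $\mathcal{C}$, since $f$ is a cbba map), this becomes simply $\del_{f\Theta}H = f\phi$, and the analogous $\delbar$-compatibility gives $\delbar_{f\bar\Theta}H = f\bar\phi$. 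So extensions of $f$ correspond bijectively to maps $H\in Hom(V,\mathcal{C})$ satisfying this pair of equations.

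Next I would match the pair of equations with the structure of a null-homotopy. Viewing $H$ as the corner entry of the $\revLpic$-shaped element of $Hom(\revLpic,Hom(V,\mathcal{C}))$ whose other components vanish, the same computation used in the proof of \Cref{hirsch-extension} shows that the pair $\del_{f\Theta}H = f\phi$, $\delbar_{f\bar\Theta}H = f\bar\phi$ is equivalent (up to a global sign) to
\[
f\Phi = \del_{f\Theta}\delbar_{f\bar\Theta}\left(\begin{tikzcd}[row sep=tiny, column sep=tiny]
0 & H \\
& 0
\arrow[no head, from=1-1, to=1-2]
\arrow[no head, from=2-2, to=1-2]
\end{tikzcd}\right),
\]
so existence of such $H$ is equivalent to $f\Phi$ being null-homotopic in the twisted sense. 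For the converse, starting from an arbitrary null-homotopy $\Psi=(f',h',g')$ with $f\Phi=\del_{f\Theta}\delbar_{f\bar\Theta}\Psi$, I would set $H := h'+\del_{f\Theta}f'+\delbar_{f\bar\Theta}g'$ and check directly that it satisfies both equations; this is exactly the combination that already appears in the proof of \Cref{hirsch-extension} to realize a homotopy of $k$-invariants by an automorphism.

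The main obstacle is not conceptual but sign-bookkeeping across the $\revLpic$-shaped formulas for $\del_\Theta,\delbar_{\bar\Theta}$. Once those signs are fixed as in the discussion preceding \Cref{hirsch-extension}, both implications collapse to the same two-step computation carried out there, with $f$ substituted in the obvious places, so no essentially new argument is required.
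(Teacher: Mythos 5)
Your proposal is correct and follows essentially the same route as the paper: unwrap $g\del=\del g$ and $g\delbar=\delbar g$ on generators to get the pair of twisted equations $\del_{f\Theta}H=f\phi$, $\delbar_{f\bar\Theta}H=f\bar\phi$, then recognize this pair as the statement that $f\Phi$ is exact for $\del_{f\Theta}\delbar_{f\bar\Theta}$ via the $\revLpic$-shaped element with $H$ in the corner, with the converse handled by the combination $H=h'+\del_{f\Theta}f'+\delbar_{f\bar\Theta}g'$ exactly as in the proof of \Cref{hirsch-extension}. The sign discrepancy you flag is immaterial since a null-homotopy can be negated.
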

\begin{proof}
	For simplicity, we denote $f\Theta,f\bar\Theta$ also by $\Theta,\bar\Theta$. If there is an extension $\widetilde{f}$ whose restriction to $V$ is $H:V\to \mathcal{C}$, then the commutativity $\del_\mathcal{C}\widetilde{f}=\widetilde{f}\del$ implies that for $x\in V$, $\del_\mathcal{C}Hx = H\del_V x+f\phi x+(1\otimes H)\Theta x$.
		That is, $f\phi=\del_\Theta H$. Similarly $f\bar\phi=\delbar_{\bar\Theta}H$. This proves that
		\[
		f\Phi=-\del_\Theta\delbar_{\bar\Theta} \left(\begin{tikzcd}[row sep=tiny, column sep=tiny]
	0 & H \\
	& 0
	\arrow[no head, from=1-1, to=1-2]
	\arrow[no head, from=2-2, to=1-2]
\end{tikzcd}\right)
		\]
		is null-homotopic. Conversely, if $f\Phi$ is null-homotopic then we can define an extension $H$ using a null-homotopy as in the proof of the previous theorem.
\end{proof}
\begin{corollary}
	Let $\mathcal{A}\otimes \bigwedge V$ be a linear Hirsch extension structured by $\Phi:V[-1]\to\mathcal{A}$ with local systems $\Theta,\bar\Theta$, and let $i: \mathcal{A}\to\mathcal{A}\otimes\bigwedge V$ be the canonical inclusion. Then the composition
	\[
	V[-1]\xrightarrow{\Phi}\mathcal{A}\xrightarrow{i}\mathcal{A}\otimes\bigwedge V
	\]
	is null-homotopic with respect to local systems $i\Theta,i\bar\Theta$.
\end{corollary}
\begin{proof}
	Apply the proposition to $i$ which extends to the identity map on $\mathcal{A}\otimes\bigwedge V$. 
\end{proof}
\begin{remark}
	From the proof of the previous proposition, there is in fact a canonical null-homotopy $\left(\begin{tikzcd}[row sep=tiny, column sep=tiny]
	0 & \iota_V \\
	& 0
	\arrow[no head, from=1-1, to=1-2]
	\arrow[no head, from=2-2, to=1-2]
\end{tikzcd}\right)
$ where $\iota_V$ is the inclusion of $V$ into $\mathcal{A}\otimes\bigwedge V$. This null-homotopy induces a map $V[-1]\to \cone(i)[-1]$ which further yields a map $V\to \cone(i)=(\mathcal{A}\otimes\bigwedge V)\oplus\mathcal{A}[1]$ given by $(\iota_V, \Phi)$. Here we identify $Hom(V[-1],\mathcal{A})=Hom(V,\mathcal{A}[1])$ since $Hom(\revLpic,\KK)=\Lpic$.

Recall that since $i$ is injective, we have a quasi-isomorphism $\cone(i)\xrightarrow{\simeq}\coker(i)$. The composition
\[
V\to\cone(i)\xrightarrow{\simeq}\coker(i)=V+\mathcal{A}^+\otimes V+\mathcal{A}\otimes \bigwedge^2 V+\dots
\]
is the caonoical inclusion.
\end{remark}

%\begin{corollary}
%	Under the same assumptions of the previous corollary, we have an induced map
%	\[
%	V\to \cone(i).
%	\]
%\end{corollary}

%\begin{example}[tautological extension]
%	Let $\varphi:\mathcal{A}\to \mathcal{C}$ be a cbba map and let $V$ denote $\cone(\varphi)=\mathcal{C}+\mathcal{A}[1]$. There there is a canonical map $\cone(\varphi)\to \mathcal{A}$ which determines a linear Hirsch extension $\mathcal{A}\otimes \wedge V$, and a canonical null-homotopy 
%\end{example}

\begin{example}[tautological extension]\label{example:tautological extension}
\begin{enumerate}[(i)]
	\item Let $f: \mathcal{A}\to \mathcal{C}$ be a cbba map and let $W$ denote $\cone(f)=\mathcal{C}\oplus\mathcal{A}[1]$. Then there is a canonical map $\Phi: W[-1]\to \mathcal{A}$ tautologically given as follows. Write a typical element of $W$ as
	\[
	\left(c, \begin{tikzcd}[row sep=tiny, column sep=tiny]
	z \\
	x & y
	\arrow[no head, from=2-1, to=1-1]
	\arrow[no head, from=2-1, to=2-2]
\end{tikzcd}\right)
	\]
	and think of $c, x,y,z$ as maps from $W=\cone(f)$ into $\mathcal{C}$ and $\mathcal{A}$ accordingly.
	Then the map $\Phi$ is the mapping
	\[
	\left(\begin{tikzcd}[row sep=tiny, column sep=tiny]
	y & x \\
	& z
	\arrow[no head, from=1-1, to=1-2]
	\arrow[no head, from=2-2, to=1-2]
\end{tikzcd}\right).
	\]
	Now $f\Phi$ is canonically null-homotopic, indeed
	\[
	f\Phi=-\del\delbar\left(\begin{tikzcd}[row sep=tiny, column sep=tiny]
	0 & c \\
	& 0
	\arrow[no head, from=1-1, to=1-2]
	\arrow[no head, from=2-2, to=1-2]
\end{tikzcd}\right).
	\]
	Therefore $f$ extends to a map $\widetilde{f}:\mathcal{A}\otimes\bigwedge W\to \mathcal{C}$. This extension restricted to $W$ is simply the map $c$.
	
	Moreover, similar to the remark above, the null-homotopy induces a map
	\[
	W\to \cone(f)
	\]
	which is nothing but the identity map.
	\item More generally, let $V$ be any bicomplex equipped with a morphism $V\xrightarrow{g}\cone(f)$. Then we have an induced linear Hirsch extension $\mathcal{A}\otimes \bigwedge V$ of $\mathcal{A}$ structured by
	\[
	V[-1]\xrightarrow{g[-1]}\cone(f)[-1]\xrightarrow{\Phi}\mathcal{A}
	\]
	and an induced extension $\mathcal{A}\otimes \bigwedge V\to \mathcal{C}$.
\end{enumerate}
\end{example}

\subsection{Minimal algebra}
\begin{definition}
	A \textit{connected} cbba $\mathcal{M}$ is \textbf{minimal} if it is free as an algebra and minimal in the sense that $\del\delbar\mathcal{M}\subset\mathcal{M}^+\cdot\mathcal{M}^+$.
For a minimal cbba we define its \textbf{dual homotopy} bicomplex of indecomposables to be $\pi=\mm{M}/\mm{M}^+\cdot\mm{M}^+$.
\end{definition}
%\begin{remark}\label{rmk:minimal-defn}
%	This definition is more general than \cite[Definition 2.9]{stelzig2021structure}, and will in turn allow us to construct minimal models that are not minimal in the sense of \cite{stelzig2021structure}. See \Cref{rmk:no-contradiction}.
%\end{remark}
\begin{example}\label{dual-lie}
\begin{enumerate}[(i)]
	\item Let $\mathfrak{g}$ be a finite dimensional differential graded Lie algebra equipped with a vector space splitting $\mathfrak{g}=\mathfrak{g}_{1,0}\oplus \mathfrak{g}_{0,1}$ into two differential Lie subalgebras. Then the Cartan-Eilenberg algebra
\[
\bigwedge \mathfrak{g}^{\vee}=\bigwedge(\mathfrak{g}_{1,0}^{\vee}\oplus \mathfrak{g}_{0,1}^{\vee})
\]
becomes a bigraded algebra whose differential $d$ dual to the Lie bracket on $\mathfrak{g}$ also splits into $(1,0)$ and $(0,1)$ parts, making it into a cbba which is minimal for dimension reasons. It is clear that every free cbba (finitely) generated in degree one is isomorphic to one of these algebras. 

We will call a free cbba (not necessarily finitely) generated in degree one a \textbf{dual Lie algebra}.
\item Let $\mathfrak{a}\subset \mathfrak{g}$ be an ideal with quotient $\mathfrak{h}$ compatible with the previous splitting on $\mathfrak{g}$, i.e. $\mathfrak{a}=\mathfrak{a}\cap\mathfrak{g}_{1,0}+\mathfrak{a}\cap\mathfrak{g}_{0,1}$. Then $\bigwedge\mathfrak{g}^\vee$ is a linear Hirsch extension of $\bigwedge\mathfrak{h}^\vee$ by $\mathfrak{a}^\vee$. If further $0\to\mathfrak{a}\to \mathfrak{g}\to \mathfrak{h}\to 0$ is a central extension, then the extension is structured by an untwisted homotopy class.
\item One can now speak of \textit{solvable} resp. \textit{nilpotent} dual Lie algebras using (ii). On the dual side, this means the dual Lie algebra is obtained by a finite chain of linear Hirsch extensions structured by twisted resp. untwisted homotopy classes.
\end{enumerate}
\end{example}

We now analyze the structure of a connected minimal cbba $\mathcal{M}$. For simplicity, let us first assume that $\mathcal{M}^1=0$. Let $\mathcal{M}_k$ be the subalgebra of $\mathcal{M}$ generated by
\[\tau_{\le k}\pi=\pi^{\le k}\oplus (\im\del+\im\delbar)\cap \pi^{k+1}.\]
Then
\begin{enumerate}[(i)]
	\item $\mathcal{M}_0=\mathcal{M}_1=\KK$.
	\item Each $\mathcal{M}_k$ is closed under differentials $\del,\delbar$. To see this, $\tau_{\le k}\pi$ consists of all generators of $\mathcal{M}$ up to degree $k$ and those generators in degree $(k+1)$ which are hit (mod. decomposables) by differentials. 
	\begin{itemize}
		\item For a generator in degree $\le k-1$, its differentials are in degree $k$ which must be contained in $\mathcal{M}_k$ for dimension reason.
		\item For a generator in degree $k$, either its differentials are decomposable which are contained in $\mathcal{M}_k$ for dimension reason, or its differentials modulo decomposables hit some generator in degree $(k+1)$, but those generators are included in $\tau_{\le k}\pi$.
		\item Finally for a generator of degree $(k+1)$ in $\tau_{\le k}\pi$, its differentials are decomposable and thus contained in
	\[
	\mathcal{M}^1\cdot \mathcal{M}^{k+1}+\mathcal{M}^2\cdot \mathcal{M}^{k}+\cdots
	\]
	All the terms are contained in $\mathcal{M}_k$, except possibly for $\mathcal{M}^1\cdot \mathcal{M}^{k+1}$. But $\mathcal{M}^1=0$ by our simply-connectedness assumption.
	\end{itemize}
	\item From (ii) we also see that, for $k\ge 3$, $\mathcal{M}_k$ is a linear Hirsch extension of $\mathcal{M}_{k-1}$ by $H^k\pi$. For $k=2$, the degree $3$ generators in $H^2\pi$ might hit a term in $\mathcal{M}^2\cdot\mathcal{M}^2$ which could involve a quadratic expression of generators in $H^2\pi$. So the extension $\mathcal{M}_1\subset\mathcal{M}_2$ may not be linear.
\end{enumerate}

For a general minimal algebra, the situation is even  worse because the subalgebras $\mathcal{M}_k$ are not necessarily closed under differentials. This motivates the definitions below.

\begin{definition}
A connected minimal cbba $\mathcal{M}$ is said to be \textbf{autonomous} if all $\mathcal{M}_k$'s are closed under differentials. The autonomous condition is equivalent to that the \textit{differential} subalgebra generated by $\mathcal{M}^{\le k}$ is $\mathcal{M}_k$.
\end{definition}

\begin{definition}\label{defn:solvable-nilpotent}
A minimal cbba $\mathcal{M}$ is said to be \textbf{solvable} if it has a filtration by sub-cbba's $\KK=F_0\mathcal{M}\subset F_1\mathcal{M}\subset F_2\mathcal{M}\subset\dots\subset \bigcup_k F_k\mathcal{M}=\mathcal{M}$ such that
\begin{enumerate}[(i)]
	\item each $F_k \mathcal{M}$ is a linear Hirsch extension over $F_{k-1}\mathcal{M}$, and
	\item $\mathcal{M}$ is degree-wise exhausted by finitely many $F_k\mathcal{M}$'s.
\end{enumerate}
If moreover the linear extensions are structured by ordinary untwisted homotopy classes, then we say $\mathcal{M}$ is \textbf{nilpotent}. The filtration $F_\bullet$ is called a \textbf{solvable series} resp. \textbf{nilpotent series}.

By dropping the assumption (ii), we have notions of \textit{generalized} solvable and \textit{generalized} nilpotent algebras.
%Generalized nilpotent algebras are autonomous, see \Cref{nilpotent-implies-auto}.
\end{definition}

The following is a useful observation.
\begin{lemma}\label{cocycle-exists}
	Let $\mathcal{M}$ be a generalized solvable algebra filtered by $F_k\mathcal{M}$ as in \Cref{defn:solvable-nilpotent}. Let $i$ be the first positive degree for which $\mathcal{M}^i\neq 0$. Then $H^i_A\mathcal{M}\neq 0$.
\end{lemma}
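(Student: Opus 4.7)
The goal is $H^i_A\mathcal{M}\neq 0$. Since $\mathcal{M}^{i-1}=0$ for $i\ge 2$ and $\mathcal{M}^0=\KK$ carries trivial differentials when $i=1$, the subspace $(\im\del+\im\delbar)^i$ vanishes, so $H^i_A\mathcal{M}=\ker\del\delbar|_{\mathcal{M}^i}$. It therefore suffices to produce a nonzero $x\in\mathcal{M}^i$ with $\del\delbar x=0$.

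To locate such an $x$, let $k_0$ be the smallest index with $F_{k_0}\mathcal{M}^i\neq 0$ and write the Hirsch extension $F_{k_0}\mathcal{M}=F_{k_0-1}\mathcal{M}\otimes\bigwedge V$. By minimality of $k_0$, the bicomplex $V$ is concentrated in degrees $\ge i$ with $V^i\neq 0$, and $F_{k_0-1}\mathcal{M}^+$ is concentrated in degrees $\ge i+1$. For any $x\in V^i$ a degree count gives $\Theta(x)=\bar\Theta(x)=0$ (since $F_{k_0-1}\mathcal{M}^+\otimes V$ begins in degree $2i+1>i+1$), so $\del x=\del_V x+\phi(x)$ and $\delbar x=\delbar_V x+\bar\phi(x)$. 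Expanding $\del\delbar x$ and sorting its terms by type in $F_{k_0-1}\mathcal{M}\otimes\bigwedge V$ yields three pieces: a generator piece in $1\otimes V^{i+2}$, a base piece in $F_{k_0-1}\mathcal{M}^{i+2}\otimes 1$, and a mixed piece in $F_{k_0-1}\mathcal{M}^+\otimes V$. Minimality of $F_{k_0}\mathcal{M}$ forces each type to sit in $(F_{k_0}\mathcal{M}^+)^2$: the generator piece must vanish, giving $\del_V\delbar_V=0$ (so $V$ is itself a minimal bicomplex); the base piece must lie in $(F_{k_0-1}\mathcal{M}^+)^2$, which
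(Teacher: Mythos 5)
Your argument is cut off mid-sentence, so as submitted it is not a complete proof. Up to the point where it stops, the strategy matches the paper's: reduce to showing $\ker\del\delbar\cap\mathcal{M}^i\neq 0$, choose the first $k_0$ with $(F_{k_0}\mathcal{M})^i\neq 0$, identify $(F_{k_0}\mathcal{M})^i$ with the new generators $V^i$, and argue by decomposing $\del\delbar x$ according to $F_{k_0}\mathcal{M}=F_{k_0-1}\mathcal{M}\otimes\bigwedge V$. The bookkeeping you do (showing $\Theta x=\bar\Theta x=0$ by degree, splitting $\del\delbar x$ into generator, base and mixed pieces, forcing $\del_V\delbar_V=0$ on $V^i$ from minimality, placing the base piece in $(F_{k_0-1}\mathcal{M}^+)^2$ which then vanishes since $2(i+1)>i+2$) is correct and in fact spells out what the paper only gestures at.

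The genuine gap is the mixed piece $\Theta(\delbar_V x)\in F_{k_0-1}\mathcal{M}^+\otimes V$, which you list but never dispose of before the proof stops. For $i\ge 2$ it vanishes because $F_{k_0-1}\mathcal{M}^+\otimes V$ starts in degree $2i+1>i+2$, but for $i=1$ one has $2i+1=i+2=3$, so $\Theta(\delbar_V x)$ can a priori land in the nonzero space $F_{k_0-1}\mathcal{M}^2\otimes V^1$; minimality alone does not kill it, since $F_{k_0-1}\mathcal{M}^+\otimes V\subset (F_{k_0}\mathcal{M}^+)^2$ already. Ruling this out requires more than degree counts: one has to bring in the structure equations $\del^2=\delbar^2=(\del+\delbar)^2=0$ (which, sorted by type, give identities such as $\Theta\del_V=0$, $\bar\Theta\delbar_V=0$, $\Theta\delbar_V+\bar\Theta\del_V=0$ on $V^1$) to conclude that some nonzero $x\in V^1$ actually lies in $\ker\del\delbar$. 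This extra step — or a separate treatment of $i=1$ — is missing, and the lemma is applied in the paper to fibers $\mathcal{F}$ of algebraic fibrations that may well begin in degree $1$, so the case cannot simply be excluded.

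As a smaller point, your assertion that "minimality of $F_{k_0}\mathcal{M}$ forces each type to sit in $(F_{k_0}\mathcal{M}^+)^2$" tacitly uses that $F_{k_0}\mathcal{M}\cap\mathcal{M}^+\cdot\mathcal{M}^+=(F_{k_0}\mathcal{M}^+)^2$, i.e.\ that minimality of $\mathcal{M}$ restricts to minimality of the sub-cbba $F_{k_0}\mathcal{M}$. This is true (compare $V$-weights in the free decomposition $\mathcal{M}=F_{k_0}\mathcal{M}\otimes\bigwedge(\text{later generators})$), but it is worth saying, since it is exactly this that lets you split the minimality condition type by type.
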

\begin{proof}
	 Consider the first $k$ for which $(F_k\mathcal{M})^i=F_k\mathcal{M}\cap \mathcal{M}^i\neq 0$, we show that $\ker\del\delbar\cap (F_k\mathcal{M})^i\neq 0$ and thus they define non-zero cohomology classes since $\mathcal{M}^{i-1}=0$. Assume otherwise, then $(F_k\mathcal{M})^i$ injects by $\del\delbar$ into decomposables in $(F_{k-1}\mathcal{M})^{i+2}$. But this decomposable subspace
	 \[
	 (F_{k-1}\mathcal{M})^1\cdot (F_{k-1}\mathcal{M})^{i+1}+(F_{k-1}\mathcal{M})^2\cdot (F_{k-1}\mathcal{M})^i+\dots
	 \]
	 must be zero by our assumptions on $i$ and $k$.
\end{proof}

\begin{definition}An extension $\mathcal{A}\otimes \bigwedge V$ of $\mathcal{A}$ by $V$ differed from being a linear Hirsch extension by a quadratic term $V\to \bigwedge ^2 V$ is called a \textbf{quadratic-linear} Hirsch extension.
\end{definition}

\begin{theorem}[structure of minimal cbba]
	 An autonomous minimal cbba $\mathcal{M}$ is determined up to isomorphism by:
	 \begin{enumerate}[(i)]
	 	\item the minimal algebra $\mathcal{M}_2$; and
	 	\item an inductive sequence of twisted homotopy classes:
	 	\[
	 	\Phi_k\in \left[V_k[-1],\mathcal{M}_{k-1}\right]_{\Theta^k,\bar\Theta^k},\  k=3,4,5,\dots
	 	\]
	 	describing $\mathcal{M}_k$ as a linear Hirsch extension of $\mathcal{M}_{k-1}$ by $V_k=H^k\pi$.
	 \end{enumerate}
\end{theorem}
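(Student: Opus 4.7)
The strategy is to realize $\mathcal{M}$ as the union of its tower $\KK=\mathcal{M}_0\subset\mathcal{M}_1\subset\mathcal{M}_2\subset\cdots$ and apply Theorem \ref{hirsch-extension} stage by stage. Autonomy ensures each $\mathcal{M}_k$ is a sub-cbba, and by the preceding discussion the new generators added at stage $k$ fill out exactly $V_k=H^k\pi$, living in degrees $k$ and $k+1$.

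The first point to verify is that for $k\ge 3$ the inclusion $\mathcal{M}_{k-1}\subset\mathcal{M}_k$ is a \emph{linear} Hirsch extension by $V_k$. The only possible obstruction is a term quadratic in $V_k$ appearing in $\del v$ or $\delbar v$ for a new generator $v$. Any product of two elements of $V_k$ has total degree at least $2k$, whereas $\del v$ and $\delbar v$ live in degrees $\le k+2$; since $2k>k+2$ for $k\ge 3$, no quadratic terms can arise, and every other decomposable contribution has the linear form required by Definition \ref{defn:hirsch-extension}. Applying Theorem \ref{hirsch-extension} then produces the classifying data $(\Theta^k,\bar\Theta^k,[\Phi_k])$ of part (ii). The datum $\mathcal{M}_2$ is retained separately in (i) because the same degree count breaks at $k=2$: two generators of degree $2$ can pair to hit a decomposable in degree $3$, so $\mathcal{M}_1\subset\mathcal{M}_2$ is at best quadratic-linear. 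The forward direction is then tautological, and for the converse $\mathcal{M}$ is rebuilt by inductively forming the prescribed linear Hirsch extensions on top of $\mathcal{M}_2$ and setting $\mathcal{M}=\bigcup_k\mathcal{M}_k$; autonomy is automatic from the construction.

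The real work lies in uniqueness. Given two autonomous minimal $\mathcal{M},\mathcal{M}'$ realizing the same data, I would build an isomorphism $\varphi=\bigcup\varphi_k\colon\mathcal{M}\xrightarrow{\sim}\mathcal{M}'$ inductively: start from the given $\varphi_2\colon\mathcal{M}_2\xrightarrow{\sim}\mathcal{M}'_2$ and, supposing $\varphi_{k-1}$ in hand, transport the Hirsch data of $\mathcal{M}_k$ through $\varphi_{k-1}$ so that the two k-invariants lie in the same twisted homotopy group over $\mathcal{M}'_{k-1}$, then appeal to Theorem \ref{hirsch-extension} to extend to $\varphi_k$. The delicate point, where I expect the bookkeeping to be most tricky, is that each $[\Phi_k]$ lives in a homotopy group depending on $\mathcal{M}_{k-1}$, so equality of k-invariants across $\mathcal{M}$ and $\mathcal{M}'$ is meaningful only after $\varphi_{k-1}$ is fixed; the freedom in the relative automorphism $\sigma$ from Theorem \ref{hirsch-extension} is precisely what absorbs this ambiguity and keeps the induction compatible stage by stage.
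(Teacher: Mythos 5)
Your argument matches the paper's (implicit) proof: the theorem is meant to follow from the preceding structural analysis—autonomy makes each $\mathcal{M}_k$ a sub-cbba, the degree count $2k>k+2$ for $k\ge 3$ rules out terms in $\bigwedge^{\ge 2}V_k$ so the extension is a linear Hirsch extension, and Theorem~\ref{hirsch-extension} then supplies the classifying data—with uniqueness obtained by the same inductive transport you describe. One small slip: at $k=2$ the troublesome product of two degree-$2$ generators sits in degree $4$ and is hit by the differential of a degree-$3$ generator, not ``a decomposable in degree $3$'' as you wrote; this does not affect the argument.
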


\subsection{Quadratic-linear Hirsch extension}
Let $\mathcal{A}\otimes \bigwedge V$ be a quadratic-linear Hirsch extension whose differentials are determined on $V$ by
\[
\del =\del_V+\phi+\Theta, \ \delbar=\delbar_V+\bar\phi+\bar\Theta.
\]
This is similar to the linear case, except that now $\del_V=\del_V^L+\del_V^Q$ has a linear component $\del_V^L: V\to V$ and an extra quadratic component $\del_V^Q: V\to \bigwedge^2 V$. Then just like in the linear case, analyzing the types of the terms in the equation $\del^2=0$ shows that
\begin{enumerate}[(i)]
	\item $\del_V^2=0$, in $\bigwedge V$;
	\item $\del_\mathcal{A}\phi+\phi\del_V^L+(1\otimes \phi)\Theta=0$, in $\mathcal{A}^+$;
	\item $\del_{\mathcal{A}\otimes V}^L\Theta+\Theta\del_V^L+(1\otimes \Theta)\Theta+\phi\del_V^Q=0$, in $\mathcal{A}^+\otimes V$;
	\item $\Theta\del_V^Q+(1\otimes \del_V^Q)\Theta=0$, in $\mathcal{A}^+\otimes \bigwedge^2 V$.
\end{enumerate}

The equation (i) implies that $V$ is a dual differential graded Lie algebra: if $V$ is finite dimensional then $V^\vee$ carries a differential dual to $\del_V^L$ and a skew-symmetric binary operator $[-,-]$ dual to $\del_V^Q$, the equation $\del_V^2=0$ is equivalent to that $V^\vee$ is a differential graded Lie algebra. With this understood, the equation (iv) can be interpreted as follows: $\mathcal{A}\otimes V^\vee$ is now equipped with the structure of a differential graded Lie algebra over $(\mathcal{A},\del_V)$; and $\Theta$, when identified as a map
\[
V^\vee\to \mathcal{A}\otimes V^\vee,
\]
has to be a Lie homomorphism.
We may define $\del_\Theta^L$ as before so that the equation (ii) reads $\del_\Theta^L\phi=0$. Now the equation (iii) shows that $(\del_\Theta^L)^2$ is no longer zero, but obstructed by a "curvature" term $\phi\del_V^Q$. If we identify $\phi$ as an element in $\mathcal{A}\otimes V^\vee$, then $\phi\del_V^Q=0$ is equivalent to that
\[
[\phi,-]=0.
\]
In general the equation (iii) says that $\del_\Theta^L+[\phi,-]$ is a differential operator.

To proceed, we impose a flatness condition.
\begin{definition}
	A quadratic-linear Hirsch extension is \textbf{flat} if $(\del_\Theta^L)^2=0$.
\end{definition}
Then the classification of flat quadratic-linear Hirsch extensions is similar to the linear case, except that $V$ is more structured now and $\Theta, \phi$ etc. all respect the extra structures.

\begin{theorem}\label{flat-extension}
	A flat quadratic-linear Hirsch extension of $\mathcal{A}$ by $V$ is determined, up to isomorphism, by
	\begin{enumerate}[(i)]
		\item a commuting pair of local systems $\Theta,\bar\Theta$ and
		\item a twisted homotopy class $[\Phi]\in\left[V[-1], \mathcal{A}\right]_{\Theta,\bar\Theta}$,
	\end{enumerate}
	in which $\Theta,\bar\Theta$ and $\Phi$ (and homotopies) all respect the corresponding dual differential graded Lie structures on $V$.
\end{theorem}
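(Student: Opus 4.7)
The plan is to mirror the proof of the linear case (Theorem~\ref{hirsch-extension}) while carefully tracking the quadratic component $\del_V^Q$ and the role of flatness. The flatness hypothesis $(\del_\Theta^L)^2=0$, together with its conjugate and mixed analogs, ensures that $(\operatorname{Hom}(V,\mathcal{A}),\del_\Theta^L,\delbar_{\bar\Theta}^L)$ remains a bicomplex, so the twisted homotopy group $[V[-1],\mathcal{A}]_{\Theta,\bar\Theta}$ is well-defined and the $k$-invariant formalism of Theorem~\ref{hirsch-extension} transfers.

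I would first extract the clean meaning of the four structural equations derived above: equation (iv) says $\Theta$ descends to a map of differential graded Lie algebras $V^\vee\to\mathcal{A}\otimes V^\vee$; equation (iii) combined with flatness forces the central condition $[\phi,-]=0$; and the conjugate and mixed equations play the same role for $\bar\phi,\bar\Theta$ and for the compatibility of $\Phi=(\phi,\bar\phi)$. These are precisely the conditions encoded in the phrase \emph{respect the dgLa structure}. Conversely, given such data, the differentials $\del=\del_V+\phi+\Theta$ and $\delbar=\delbar_V+\bar\phi+\bar\Theta$ square to zero by a direct type/degree analysis identical to the one sketched earlier in the subsection.

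For the classification up to isomorphism I would rerun the computation of $\sigma^{-1}\del\sigma$ for a relative algebra automorphism $\sigma:x\mapsto x+H(x)$ with $H:V\to\mathcal{A}$, extended multiplicatively across $\bigwedge V$. The interaction of $\sigma$ with $\del_V^Q$ produces new correction terms absent in the linear case; the main obstacle is to show that these cancel exactly when $H$ respects the dgLa structure on $V$, so that the space of relative automorphisms restricts cleanly to the dgLa-compatible subspace of $\operatorname{Hom}(V,\mathcal{A})$. Once this is verified, the transformation of the linear $k$-invariant agrees with the formula $\Phi\mapsto \Phi+\del_\Theta^L\delbar_{\bar\Theta}^L H$ from Theorem~\ref{hirsch-extension}, and both directions of the classification follow exactly as in the linear case: isomorphic extensions yield homotopic $k$-invariants, and conversely a dgLa-respecting null-homotopy of $\Phi-\Phi'$ produces an $H=h+\del_\Theta^L f+\delbar_{\bar\Theta}^L g$ implementing the desired isomorphism.
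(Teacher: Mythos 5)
Your proposal takes essentially the route the paper relies on; the paper offers no detailed argument for \Cref{flat-extension}, only the remark preceding it that ``the classification \ldots is similar to the linear case, except that $V$ is more structured now and $\Theta, \phi$ etc.\ all respect the extra structures,'' and your sketch fleshes out precisely that claim. Your reading of equations (iii)--(iv) --- $\Theta$ as a dgLa homomorphism $V^\vee\to\mathcal{A}\otimes V^\vee$, flatness equivalent to $[\phi,-]=0$ --- matches the paper's discussion, and the forward direction (data $\Rightarrow$ well-defined flat extension) is indeed the same type/degree bookkeeping as in the linear case.

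The one step you flag (``show that these cancel exactly when $H$ respects the dgLa structure'') is correctly located but should be made concrete, since it is where the quadratic story actually differs from the linear one. Conjugating by $\sigma(x)=x+H(x)$ interacts with $\del_V^Q$ in two distinct ways. The diagonal term contributes $\langle H\wedge H,\del_V^Q(\cdot)\rangle=\tfrac12\langle[H,H],\cdot\rangle$ to $\phi$; this vanishes automatically because $H$ is a degree-$0$ element of the graded Lie algebra $\mathcal{A}\otimes V^\vee$ and $[H,H]=0$ in characteristic zero. The cross terms, however, shift the local system itself: $\Theta\mapsto\Theta-[H,-]$, and this does \emph{not} vanish in general. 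So to present the classification as ``(fixed pair of local systems)~$\times$~(twisted homotopy class)'' rather than as a quotient by gauge transformations, one must restrict to relative automorphisms whose $H$ is central for the bracket dual to $\del_V^Q$, i.e.\ $[H,-]=0$. That centrality, together with the centrality of $\phi$ forced by flatness, is the concrete content of ``$H$ respects the dgLa structure''; once it is imposed the transformation of $\Phi$ collapses to the linear formula $\Phi\mapsto\Phi+\del_\Theta^L\delbar_{\bar\Theta}^L H$ and both directions of the classification go through exactly as in \Cref{hirsch-extension}.
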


If we further allow the differential $\del_V$ to have all higher order terms, then $V$ is a dual Lie-infinity algebra and the general structure of Hirsch extensions is better phrased in terms of deformation theory of Lie-infinity algebras on the dual side. 

The quadratic (especially semi-simple) part of the differentials in degrees above one, the curvature term, as well as the non-closedness of $\mathcal{M}_k$ are new features of the theory of minimal cbba's, when compared to the theory of minimal cdga's. We will pursue these elsewhere.

\subsection{Homotopy}
We define a notion of homotopy convenient for dealing with local coefficients. The difference between the notion herein and that in \cite{stelzig2025pluripotential} is the difference between left and right homotopy, which coincide in the situation we concern since all cbba's are fibrant while linear Hirsch extensions are cofibrations. We will need the following lemma in order to construct cylinder objects for defining of homotopy.
\begin{lemma}[contracting homotopy]
\begin{enumerate}[(i)]
	\item Let $s, \bar{s}:W\to W$ be a pair of \textbf{contracting homotopies}, i.e. $\del s+s\del=id$, $\del\bar s+\bar s\del=0$, $\delbar \bar s+\bar s\delbar=id$ and $\delbar s+s\delbar=0$. Then $W$ is contractible, i.e. $H_A W=0$ and $H_{BC}W=0$.
	\item Let $\mathcal{A}$ be a cbba and $\Theta,\bar\Theta$ a pair of commuting local systems on $\mathcal{A}$ with coefficients in $V$. Let $s,\bar s:\mathcal{A}\to \mathcal{A}$ be a pair of contracting homotopies, and let $\mathcal{O}_{s,\bar s}$ denote the subspace of $\mathcal{A}$ whose left-multiplication operations graded-commutes\footnote{note that $s,\bar s$ are of degree $-1$.} with both $s,\bar s$.
	
	If the images of $\Theta,\bar\Theta$ lie in $\mathcal{O}_{s,\bar s}\otimes V$, then $Hom(V,\mathcal{A})$ is contractible with respect to $\del_\Theta, \delbar_{\bar\Theta}$.
\end{enumerate}
\end{lemma}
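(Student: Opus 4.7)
The plan is to dispatch (i) by direct manipulation with $s,\bar s$, and then to reduce (ii) to (i) by transporting the contracting homotopies to $Hom(V,\mathcal{A})$ via post-composition; the only nontrivial input is the hypothesis on $\mathcal{O}_{s,\bar s}$, which is used precisely to commute $s,\bar s$ past the twisting terms $\Theta,\bar\Theta$.

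For (i), I would prove $H_{BC}W=0$ as follows: given $w$ with $\del w=\delbar w=0$, rewrite $w=\del(sw)+s\del w=\del(sw)$, then rewrite $sw=\delbar(\bar s sw)+\bar s\delbar(sw)$ and observe that $\bar s\delbar(sw)=-\bar s s\delbar w=0$ using $\delbar s+s\delbar=0$. This gives $w=\del\delbar(\bar s sw)$, so $[w]=0$ in $H_{BC}$. For $H_A$, given $w$ with $\del\delbar w=0$, write $w=\del(sw)+s\del w$; the element $s\del w$ is $\delbar$-closed because $\delbar s=-s\delbar$ and $\del\delbar w=0$, hence is $\delbar$-exact via $\bar s$, placing $w$ in $\im\del+\im\delbar$.

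For (ii), I would define $S,\bar S:Hom(V,\mathcal{A})\to Hom(V,\mathcal{A})$ by post-composition, $(S\psi)(x)=s(\psi(x))$ and $(\bar S\psi)(x)=\bar s(\psi(x))$; both are of degree $-1$. The main computation is $\del_\Theta S+S\del_\Theta=id$. Expanding
\[
\del_\Theta\psi=\del_\mathcal{A}\psi-(-1)^{|\psi|}\psi\del_V-(-1)^{|\psi|}(1\otimes\psi)\Theta
\]
and tracking the sign shift $|S\psi|=|\psi|-1$, the middle terms containing $\del_V$ cancel automatically, the outer $\del_\mathcal{A}$ terms combine to $(\del_\mathcal{A} s+s\del_\mathcal{A})\psi=\psi$, and the remaining twisting terms cancel provided $s\circ(1\otimes\psi)\Theta=(1\otimes s\psi)\Theta$. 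This last identity is the key place where the hypothesis enters: writing $\Theta x=\sum a_i\otimes v_i$ with $a_i\in\mathcal{O}_{s,\bar s}$, the Koszul convention and the relation $sL_{a_i}=(-1)^{|a_i|}L_{a_i}s$ produce matching signs on both sides.

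The remaining three identities are analogous: $\delbar_{\bar\Theta}\bar S+\bar S\delbar_{\bar\Theta}=id$ uses $\delbar\bar s+\bar s\delbar=id$ together with $\bar s$-commutation past $\bar\Theta$, while $\del_\Theta\bar S+\bar S\del_\Theta=0$ and $\delbar_{\bar\Theta}S+S\delbar_{\bar\Theta}=0$ use the cross relations on $\mathcal{A}$ together with the cross-commutations of $\bar s$ past $\Theta$ and $s$ past $\bar\Theta$; all four commutations are supplied by the $\mathcal{O}_{s,\bar s}$ hypothesis. Once these are in hand, part (i) applied to $W=Hom(V,\mathcal{A})$ yields the desired contractibility. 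The main obstacle is purely bookkeeping — there are no conceptual steps beyond commuting $s,\bar s$ past $\Theta,\bar\Theta$ — but one must be disciplined about Koszul signs when extending scalar post-composition to the twisting terms.
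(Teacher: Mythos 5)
Your proposal is correct and follows essentially the same route as the paper: for (i) you use the identities $\del s+s\del=\mathrm{id}$, $\delbar s+s\delbar=0$, $\delbar\bar s+\bar s\delbar=\mathrm{id}$ directly to produce a $\del\delbar$-primitive for any Bott-Chern cocycle and to place any Aeppli cocycle in $\im\del+\im\delbar$, and for (ii) you transport $s,\bar s$ to $Hom(V,\mathcal{A})$ by post-composition and check the contracting-homotopy relations for $\del_\Theta,\delbar_{\bar\Theta}$, with the $\mathcal{O}_{s,\bar s}$ hypothesis absorbing the twisting terms. The paper's proof is more terse but uses the identical mechanism, so there is nothing to add.
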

\begin{proof}
	For (i), to see that $H_{BC}=0$, let $x\in \ker\del\cap\ker\delbar$, then $\del sx=x$ and $\delbar sx=0$. Meanwhile $s x=(\delbar \bar s+\bar s\delbar) s x=\delbar \bar s  s x$ and thus $x=\del s x=\del\delbar (\bar s s x)$. The proof for $H_A=0$ is similar. Now (ii) follows from (i) by noting that the anti-commuting condition implies that $s,\bar s$ induce contracting homotopies on $Hom(V,\mathcal{A})$ with respect to $\del_\Theta, \delbar_{\bar\Theta}$.
\end{proof}

\begin{corollary}\label{contractibility}
	$\mathcal{A}\otimes \bigwedge \square\xrightarrow{\text{mod $\square$}} \mathcal{A}$ is a quasi-isomorphism for all local coefficients in $\mathcal{A}$ and $\text{ideal}(\square)$ is contractible for all coefficients in $\mathcal{A}$.
\end{corollary}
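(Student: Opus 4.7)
My plan is to invoke part (i) of the preceding contracting-homotopy lemma by showing the ideal $I:=\mathrm{ideal}(\square)\subset\mathcal{A}\otimes\bigwedge\square$ is contractible. The two assertions of the corollary are equivalent via the short exact sequence $0\to I\to \mathcal{A}\otimes\bigwedge\square\to\mathcal{A}\to 0$: by \Cref{special-triangle}(i) this is a distinguished triangle, so the projection is a quasi-isomorphism exactly when $I$ is contractible. Note that $I$ is closed under the twisted differentials because $\Theta,\bar\Theta$ take $\square$ into $\mathcal{A}^+\otimes\square\subset I$ (implicitly I take $\phi=\bar\phi=0$, so the extension is ``purely twisted'').

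Rather than constructing twisted contracting homotopies on $I$ directly, I would filter by the total $\mathcal{A}$-degree to push the twist into higher filtration levels. Define the descending filtration
\begin{equation*}
F^kI=\bigoplus_{m+n\ge k}\mathcal{A}^{m,n}\otimes\bigwedge^{\ge 1}\square.
\end{equation*}
All of $\partial_\mathcal{A},\bar\partial_\mathcal{A},\Theta,\bar\Theta$ strictly raise this total $\mathcal{A}$-degree---for $\Theta,\bar\Theta$ this is the decisive point and follows directly from their images lying in $\mathcal{A}^+\otimes\square$---while $\partial_\square,\bar\partial_\square$ preserve it. Hence $F^\bullet I$ is a descending filtration by sub-bicomplexes, finite on each bidegree ($F^kI^{(i,j)}=0$ for $k>i+j$). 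On the associated graded only $\partial_\square,\bar\partial_\square$ survive, giving
\begin{equation*}
F^kI/F^{k+1}I\;\cong\;\Bigl(\bigoplus_{m+n=k}\mathcal{A}^{m,n}\Bigr)\otimes\bigwedge^{\ge 1}\square
\end{equation*}
with zero differential on the $\mathcal{A}$-factor. Each $\bigwedge^n\square$ is a direct summand of $\square^{\otimes n}$, and tensoring with $\square$ iteratively preserves contractibility---the $V$-linear extension $v\otimes x\mapsto (-1)^{|v|}v\otimes s(x)$ of a contracting homotopy $s$ of $\square$ is a contracting homotopy of $V\otimes\square$---so $\bigwedge^{\ge 1}\square$, and thus each associated graded piece, is contractible.

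To finish, contractibility transfers up the filtration by downward induction on $k$: on any fixed bidegree the filtration is eventually zero, and the distinguished triangles $F^{k+1}I\to F^kI\to F^kI/F^{k+1}I$ together with the long exact sequences in $H_A$ and $H_{BC}$ propagate contractibility from $F^{k+1}I$ (inductive hypothesis) and from the associated graded piece (just established) to $F^kI$. At $k=0$ this yields that $I$ is contractible. The main substantive input is the strict raising of $\mathcal{A}$-degree by $\Theta,\bar\Theta$---everything else is the formal transfer of contractibility through a finite filtration, so the difficulty is essentially zero once the filtration is set up correctly.
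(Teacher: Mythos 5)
Your proof is correct in substance but takes a genuinely different route from the paper. The paper argues directly: pick contracting homotopies on the square $\square$, extend them to $\mathcal{A}$-derivations of $\mathcal{A}\otimes\bigwedge\square$ (so they graded-commute with left multiplication by $\mathcal{A}$), restrict to the ideal, and invoke part (ii) of the contracting-homotopy lemma to conclude twisted contractibility. This is short, produces explicit operators, and is precisely what powers the integration operators in the remark that follows. Your argument instead filters $\mathrm{ideal}(\square)$ by total $\mathcal{A}$-degree, observes that the twisting terms strictly raise this degree (because $\Theta,\bar\Theta$ land in $\mathcal{A}^+\otimes V$), so the twist vanishes on the associated graded, which is then seen to be contractible by the elementary tensor computation for $\bigwedge^{\ge 1}\square$; contractibility is then transferred up the filtration by the long exact sequences of the triangles $F^{k+1}\to F^k\to F^k/F^{k+1}$. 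This is a legitimate spectral-sequence-style argument, and buys you a proof that does not depend on constructing explicit contracting homotopies compatible with the twist. On the other hand, it loses the explicit integration operators and requires a convergence argument at the end: you correctly note that the filtration is finite in each bidegree, but the phrase ``downward induction on $k$'' glosses over the fact that the starting point $k_0$ for the induction depends on the bidegree $(p,q)$ under consideration (because the long exact sequence mixes nearby bidegrees and mixes $H_A$ with $H_{BC}$ at a shifted degree); with the finiteness-in-each-bidegree observation this does go through, but should be spelled out more carefully. Two further minor remarks: you say you plan to ``invoke part (i) of the preceding lemma,'' but your argument in fact bypasses the lemma entirely and establishes vanishing of $H_{BC}$ and $H_A$ directly from the filtration; and your framing treats $\mathcal{A}\otimes\bigwedge\square$ as a twisted Hirsch extension with $\phi=\bar\phi=0$, which is a slightly different (though compatible) reading of ``for all local coefficients in $\mathcal{A}$'' than the paper's, where the twist is carried by the $\mathrm{Hom}(V,-)$ complex; the filtration argument is insensitive to which reading one takes, since in either case the twisting operator strictly raises $\mathcal{A}$-degree.
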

\begin{proof}
	There are contracting homotopies on $\square$ which can be extended to $\mathcal{A}$-derivations on $\mathcal{A}\otimes \bigwedge \square$. So they define contracting homotopies on $\text{ideal}(\square)$ that graded-commute with left multiplications by $\mathcal{A}$.
\end{proof}

\begin{remark}[cf. \cite{stelzig2025pluripotential}]
	This can be used to construct \textit{integration operators} on $\mathcal{A}\otimes \bigwedge \square$ sensitive to local coefficients as homotopies between identity map and the composition $ev_0:\mathcal{A}\otimes \bigwedge \square\xrightarrow{\text{mod $\square$}} \mathcal{A}\hookrightarrow \mathcal{A}\otimes \bigwedge \square$. Inductively (degree-wise) one constructs operators $s,\bar s$ so that $\del_\Theta s+s\del_\Theta=id-ev_0$, $\delbar_{\bar\Theta} s+s\delbar_{\bar\Theta}=0$ and similar for $\bar s$.
\end{remark}

Now we are ready to construct cylinder objects and define homotopy. Let $\mathcal{A}$ be a \textit{successive} linear Hirsch extension over $\mathcal{B}$ with local coefficients all in $\mathcal{B}$. This means, if we denote by $\{x_\alpha\}$ a homogeneous basis of \textit{new} generators added into $\mathcal{B}$, then (recall \Cref{defn:hirsch-extension}) we can write differentials as
\begin{align*}
	\del x_\alpha&=\eta_{\alpha\beta} x_\alpha+\phi_\alpha+\theta_{\alpha\beta}x_\beta\\
	\delbar x_\alpha&=\bar\eta_{\alpha\beta} x_\alpha+\bar\phi_\alpha+\bar\theta_{\alpha\beta}x_\beta
\end{align*}
in which $\eta_{\alpha\beta}, \bar\eta_{\alpha\beta}\in \KK$, $\phi_\alpha,\bar\phi_\alpha$ are polynomials in \textit{previously added} generators with coefficients in $\mathcal{B}$ and twisting coefficients $\theta_{\alpha\beta},\bar\theta_{\alpha\beta}\in \mathcal{B}$.

Let us write $\mathcal{A}=\mathcal{B}(x_\alpha)$ and $\mathcal{A}\otimes_\mathcal{B}\mathcal{A}=\mathcal{B}(x_\alpha,y_\alpha)$, then we \textit{define}
\[
\mathcal{A}^\square:=\mathcal{B}(x_\alpha, y_\alpha;\varphi_\alpha,\del\varphi_\alpha,\delbar\varphi_\alpha)
\]
in which $\varphi_\alpha, \del\varphi_\alpha,\delbar\varphi_\alpha$ are new generators (with prescribed differentials indicated by their names) respectively in degree $(1,1), (1,0), (0,1)$ less than
\[
\delta_\alpha=\delta(x_\alpha,y_\alpha)=x_\alpha-y_\alpha.
\]

\begin{proposition}[cylinder object]\label{cylinder}
	There exist differentials on $\mathcal{A}^\square$ extending that on $\mathcal{B}(x_\alpha,y_\alpha)$ such that
	\begin{enumerate}[(i)]
		\item $\del_\Theta\delbar_{\bar\Theta} \varphi_\alpha=\delta_\alpha+\text{terms in previous generators}$, and
		\item ideal $(\varphi_\alpha,\del\varphi_\alpha,\delbar\varphi_\alpha,\del\delbar\varphi_\alpha)$ is contractible for all coefficients in $\mathcal{B}$.
	\end{enumerate}
\end{proposition}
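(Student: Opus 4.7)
The plan is to build $\mathcal{A}^\square$ inductively, one new generator $x_\alpha$ at a time, following the order in which they appear in the given successive Hirsch extension of $\mathcal{A}$ over $\mathcal{B}$. At stage $\alpha$, I assume (i) and (ii) hold for all previous stages, so that the previously-built ideal $I_{<\alpha}:=(\varphi_\beta,\del\varphi_\beta,\delbar\varphi_\beta,\del\delbar\varphi_\beta:\beta<\alpha)$ is contractible for all local coefficients in $\mathcal{B}$, and each $\delta_\beta=x_\beta-y_\beta$ with $\beta<\alpha$ lies in $I_{<\alpha}$. First I would extend the differentials to $\mathcal{B}(x_\alpha,y_\alpha)$ using the Hirsch differential from $\mathcal{A}$ on both copies (with $y$'s in place of $x$'s on the second). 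A direct calculation then yields $\del\delta_\alpha,\delbar\delta_\alpha\in I_{<\alpha}$: the scalar and twisting terms contribute $(\eta+\theta)_{\alpha\beta}\delta_\beta$, and the polynomial difference $\phi_\alpha(x)-\phi_\alpha(y)$ factors through $\delta_\beta$'s with $\beta<\alpha$.

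The key step is to produce an element $E_\alpha=\delta_\alpha+c_\alpha$ with $c_\alpha\in I_{<\alpha}$ satisfying $\del E_\alpha=\delbar E_\alpha=0$. Using $H_{BC}(I_{<\alpha})=0$ with respect to $\del_\Theta,\delbar_{\bar\Theta}$, pick $c^1_\alpha\in I_{<\alpha}$ with $\del c^1_\alpha=-\del\delta_\alpha$. Then $u:=\delbar(\delta_\alpha+c^1_\alpha)\in I_{<\alpha}$ is both $\del$- and $\delbar$-closed, so by contractibility $u=\del\delbar w$ for some $w\in I_{<\alpha}$. Setting $v:=-\del w$ gives $\del v=0$ and $\delbar v=u$, so $E_\alpha:=\delta_\alpha+c^1_\alpha-v$ is simultaneously $\del$- and $\delbar$-closed. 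I then declare $\del(\del\varphi_\alpha):=0$, $\delbar(\delbar\varphi_\alpha):=0$, and $\del(\delbar\varphi_\alpha):=E_\alpha=:-\delbar(\del\varphi_\alpha)$; all bicomplex axioms are automatic on the new generators, and (i) holds because $E_\alpha-\delta_\alpha=c^1_\alpha-v\in I_{<\alpha}$.

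For (ii), I would extend the contracting homotopies from $I_{<\alpha}$ to the enlarged ideal $I_\alpha$ by handling the new generators separately: on the square formed by $\varphi_\alpha,\del\varphi_\alpha,\delbar\varphi_\alpha,\del\delbar\varphi_\alpha$ use the tautological contracting homotopies sending each generator to the one ``below'' it, with signs chosen so that $\del s+s\del=\mathrm{id}$, $\delbar\bar s+\bar s\delbar=\mathrm{id}$, $\del\bar s+\bar s\del=0=\delbar s+s\delbar$; extend as $\mathcal{B}$-derivations on the ambient algebra so they graded-commute with left multiplication by $\mathcal{B}$; and patch with the inductively available homotopies on $I_{<\alpha}$. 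Since the twisting coefficients lie in $\mathcal{B}\subset\mathcal{O}_{s,\bar s}$, the contracting homotopy lemma yields contractibility of $I_\alpha$ for arbitrary $\mathcal{B}$-local systems.

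The main obstacle I anticipate is the simultaneous-closure step: constructing $E_\alpha$ requires genuine bicomplex (not merely separate $\del$- and $\delbar$-) contractibility of $I_{<\alpha}$, and it is precisely the vanishing of both Aeppli and Bott-Chern cohomology, guaranteed by the inductive statement (ii), that lets one upgrade a single-differential correction into a common $\del,\delbar$-closure. The secondary technical point is arranging the new homotopies on the square to commute with left $\mathcal{B}$-multiplication, which is handled by extending them as $\mathcal{B}$-derivations and invoking the contracting homotopy lemma.
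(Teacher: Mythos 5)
Your proposal is correct and follows essentially the same inductive scheme as the paper: add one $\varphi_\alpha$ at a time, verify that $\del\delta_\alpha,\delbar\delta_\alpha$ land in the previously constructed contractible ideal, use that contractibility to find a correction $c_\alpha$ making $\delta_\alpha+c_\alpha$ simultaneously $\del$- and $\delbar$-closed, declare this to be $\del\delbar\varphi_\alpha$, and then conclude (ii) via the isomorphism $\mathcal{A}^\square\cong\mathcal{A}\otimes\bigwedge\square$ and the corollary on $\mathcal{B}$-derivation contracting homotopies.

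The one place where you genuinely diverge is the construction of the closed correction. You proceed in two steps: first pick $c^1_\alpha$ with $\del c^1_\alpha=-\del\delta_\alpha$, then cancel the remaining $\delbar$-defect $u=\delbar(\delta_\alpha+c^1_\alpha)$ by finding a $\del\delbar$-primitive $w$ and subtracting $v=-\del w$. The paper instead solves a cascade of three Bott--Chern equations for $\epsilon,\zeta,\eta$ in the previous ideal and sets $\del_\Theta\delbar_{\bar\Theta}\varphi=\delta+\epsilon+\del_\Theta\zeta+\delbar_{\bar\Theta}\eta$. Both give a valid $\del$- and $\delbar$-closed element congruent to $\delta_\alpha$ modulo $I_{<\alpha}$, so the proposition itself holds either way. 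Two remarks. First, your step ``pick $c^1_\alpha$ with $\del c^1_\alpha=-\del\delta_\alpha$ using $H_{BC}(I_{<\alpha})=0$'' is slightly misattributed: producing $c^1_\alpha$ from a $\del$-closed element of $I_{<\alpha}$ uses the vanishing of $H_\del$ of the ideal, which does follow from contractibility (a contractible bicomplex is a sum of squares, hence has trivial Dolbeault cohomology) but is a different cohomology group from $H_{BC}$. The paper's version only ever invokes vanishing of Bott--Chern cohomology. Second, the paper deliberately records the correction in the split shape $\epsilon+\del_\Theta\zeta+\delbar_{\bar\Theta}\eta$, because these three pieces reappear verbatim as the components of the obstruction cocycle $C$ in the proof of Theorem~\ref{obstruction theory}; your $c^1_\alpha-v$ does not arrive in that shape, so if you later want that argument you would need to re-derive the decomposition.
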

\begin{proof}
	The proof goes by induction on new generators using contractibility of the ideal. For simplicity, we write $x,y$ for new generators, and $x',y'$ for previous generators. Then we can write
	\begin{align*}
		&\del_\Theta x=\phi(x'),\  \delbar_{\bar\Theta} x=\bar\phi(x'),\\
		&\del_\Theta y=\phi(y'),\  \delbar_{\bar\Theta} y=\bar\phi(y'),
	\end{align*}
	and thus
	\[
	\del_\Theta\delta=\phi(x')-\phi(y'), \ \delbar_{\bar\Theta} \delta=\bar\phi(x')-\bar\phi(y').
	\]
	Note that $\phi(x')-\phi(y')$ and $\bar\phi(x')-\bar\phi(y')$ belong to the previous ideal
	\[
	(\varphi',\del\varphi',\delbar\varphi',\del\delbar\varphi')=(\varphi',\del\varphi',\delbar\varphi',\delta')
	\]
	where the equality follows from (i) by induction hypothesis. Now we claim there are $\epsilon,\zeta,\eta$ from the previous ideal satisfying the equations
	\begin{align*}
		-\del_\Theta\delbar_{\bar\Theta}\epsilon &=\del_\Theta\delbar_{\bar\Theta}\delta,\\
		-\del_\Theta\delbar_{\bar\Theta}\eta &=\del_\Theta\delta+\del_\Theta\epsilon,\\
		\del_\Theta\delbar_{\bar\Theta}\zeta &=\delbar_{\bar\Theta}\delta+\delbar_{\bar\Theta}\epsilon.
	\end{align*}
	Indeed the first equation has a solution because the right hand side is contained in the previous ideal and also in $\ker\del_\Theta\cap\ker\delbar_{\bar\Theta}$. Since the previous ideal is $\del_\Theta, \delbar_{\bar\Theta}$-contractible, its Bott-Chern cohomology vanishes and therefore $\epsilon$ can be found. Similarly we can find $\zeta,\eta$. Then we define
	\[
	\del_\Theta\delbar_{\bar\Theta}\varphi:=\delta+\epsilon+\del_\Theta\zeta+\delbar_{\bar\Theta}\eta.
	\]
	The new ideal is contractible by \Cref{connectivity} using the isomorphism
	\[
	\mathcal{A}^\square=\mathcal{A}\otimes\text{algebra}(\varphi,\del\varphi,\delbar\varphi,\del\delbar\varphi).
	\]
\end{proof}
\begin{remark}\label{kinv-homotopy}
\begin{enumerate}[(i)]
	\item There are plenty of rooms for choosing $\varepsilon, \zeta,\eta$, which comes handy later in the proof of \Cref{obstruction theory}.
	\item Note that $\mathcal{A}^\square$ is a successive linear Hirsch extension over $\mathcal{A}\otimes_\mathcal{B}\mathcal{A}$. By construction, the image of its inductive k-invariant
\[
\begin{tikzcd}[row sep=tiny, column sep=tiny]
	{\delbar\varphi} \\
	{\varphi} & {\del\varphi}
	\arrow[from=2-1, to=1-1]
	\arrow[from=2-1, to=2-2]
\end{tikzcd}[-1]\to \mathcal{A}\otimes_\mathcal{B}\mathcal{A}(\varphi',\del\varphi',\delbar\varphi')
\]
lies in the ideal $(\varphi',\del\varphi',\delbar\varphi',\delta',\delta)$. Using the quasi-isomorphism $\bullet\to \revLpic\otimes\Lpic$, this k-invariant can be identified with a twisted Bott-Chern cohomology class, which is nothing but the twisted Bott-Chern class of $(\delta+\text{previous terms})$.
\end{enumerate}
\end{remark}

\begin{definition}[homotopy] Two cbba maps from $\mathcal{A}$ into $\mathcal{C}$ extending $\mathcal{B}\xrightarrow{\nu}\mathcal{C}$ are \textbf{homotopic} (rel $\nu$) if the combined map from $\mathcal{A}\otimes_{\mathcal{B}}\mathcal{A}$ into $\mathcal{C}$ extends to $\mathcal{A}^\square$. The equivalence classes of maps (rel $\nu$) up to homotopy will be denoted\footnote{to distinguish from bicomplex homotopy.} by $hHom(\mathcal{A},\mathcal{C};\nu)$.
\end{definition}
\begin{lemma}
	Homotopic cbba maps are homotopic as bicomplex maps.
\end{lemma}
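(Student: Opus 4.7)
The plan is to forget the multiplicative structure and extract an honest bicomplex homotopy from the cbba--level witness. Given a cbba map $F\colon \mathcal{A}^\square\to\mathcal{C}$ restricting to $f_0,f_1$ on the two copies of $\mathcal{A}$ inside $\mathcal{A}\otimes_\mathcal{B}\mathcal{A}$, I would work inductively over the successive linear Hirsch filtration of $\mathcal{A}$ over $\mathcal{B}$, building $\KK$-linear maps
\[
H,S,\bar S\colon \mathcal{A}\to\mathcal{C}
\]
of bidegrees $(-1,-1)$, $(0,-1)$, $(-1,0)$ respectively. On $\mathcal{B}$ where $f_0=f_1=\nu$, set $H=S=\bar S=0$. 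On each newly adjoined generator $x_\alpha$, define
\[
H(x_\alpha):=F(\varphi_\alpha),\quad S(x_\alpha):=F(\del\varphi_\alpha),\quad \bar S(x_\alpha):=F(\delbar\varphi_\alpha).
\]

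The centerpiece of the computation is the identity $\del\delbar\varphi_\alpha=\delta_\alpha+(\text{terms from previous generators})$ furnished by Proposition \ref{cylinder}(i), together with $\del(\varphi_\alpha)=\del\varphi_\alpha$ and $\delbar(\varphi_\alpha)=\delbar\varphi_\alpha$ by construction of $\mathcal{A}^\square$. Applying $F$ and using $F(\delta_\alpha)=F(x_\alpha)-F(y_\alpha)=f_0(x_\alpha)-f_1(x_\alpha)$, we obtain
\[
\del H(x_\alpha)=S(x_\alpha),\qquad \delbar H(x_\alpha)=\bar S(x_\alpha),\qquad \del\delbar H(x_\alpha)=f_0(x_\alpha)-f_1(x_\alpha)+R_\alpha,
\]
where $R_\alpha$ is $F$ applied to a member of the previous ideal $(\varphi',\del\varphi',\delbar\varphi',\delta')$.

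The main obstacle is controlling the remainder $R_\alpha$ so that the bicomplex homotopy identity closes up coherently across the induction. The plan is to absorb $R_\alpha$ into the already--constructed part of $H,S,\bar S$: since the previous ideal is $(\del_\Theta,\delbar_{\bar\Theta})$--contractible by Proposition \ref{cylinder}(ii) (cf.\ \Cref{contractibility}), its image under $F$ sits as an explicit $(\del,\delbar)$--boundary in $\mathcal{C}$ that is expressible in terms of the homotopy data already built on earlier generators --- this is exactly what the inductive hypothesis delivers. Assembling these pieces produces a bicomplex--level identity of the shape $f_0-f_1=\del\delbar H+\del(\cdots)+\delbar(\cdots)$ on all of $\mathcal{A}$, which is the desired bicomplex homotopy. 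Conceptually, one is observing that $\mathcal{A}^\square$, stripped of its algebra structure, still functions as a cylinder object in the bicomplex category, so any bicomplex map out of it automatically gives a bicomplex homotopy.
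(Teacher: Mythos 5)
Your closing observation is the heart of the matter, and it is essentially the paper's proof: $\mathcal{A}^\square$, viewed just as a bicomplex, is a cylinder object for $\mathcal{A}$. The projection $p\colon \mathcal{A}^\square \to \mathcal{A}$ that kills the ideal $(\varphi,\del\varphi,\delbar\varphi,\delta)$ is a quasi-isomorphism (the kernel is contractible by \Cref{cylinder}), and $p\circ x = p\circ y = \mathrm{id}_{\mathcal{A}}$ for the two inclusions $x,y\colon \mathcal{A}\to\mathcal{A}^\square$. Since quasi-isomorphisms of bicomplexes are homotopy equivalences, $x$ and $y$ are bicomplex-homotopic sections of $p$, and therefore so are $f_0=Fx$ and $f_1=Fy$. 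The paper stops at this abstract level and never produces explicit homotopy operators.

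Your attempt to unwind this into explicit operators has a genuine gap. Setting $H(x_\alpha):=F(\varphi_\alpha)$, $S(x_\alpha):=F(\del\varphi_\alpha)$, $\bar S(x_\alpha):=F(\delbar\varphi_\alpha)$ only defines these maps on the linear span of the generators, not on all of $\mathcal{A}$. A bicomplex homotopy must be a $\KK$-linear map defined on every element of $\mathcal{A}$, and the homotopy identity must hold on products of generators, where your formulas say nothing. Moreover, the "absorption of $R_\alpha$ into the already-constructed part of $H,S,\bar S$" is where the real work lies, and as stated it is not clear how the absorbed terms stay consistent across the induction while keeping the homotopy identity in the correct algebraic shape; the null-homotopy relation in $\mathrm{Hom}(\mathcal{A},\mathcal{C})$ has more terms than $\del\delbar H + \del(\cdots)+\delbar(\cdots)$.

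If you want the explicit version, here is the cleaner route. The bicomplex map $x-y\colon \mathcal{A}\to\mathcal{A}^\square$ lands entirely inside the contractible ideal $\ker p$. The contracting homotopies $s,\bar s$ on that ideal (constructed as in \Cref{contractibility}) give a globally defined map $\bar s\, s\,(x-y)\colon \mathcal{A}\to\ker p$, and the contracting-homotopy identities show that $x-y$ is null-homotopic as a bicomplex map. Post-composing with $F$ hands you the null-homotopy between $f_0$ and $f_1$ on all of $\mathcal{A}$ in one step, with no induction over generators and no remainder terms to chase.
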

\begin{proof}
	This follows from that the two natural inclusions $x,y:\mathcal{A}\to\mathcal{A}^\square$ become isomorphisms when post-composed with the quasi-isomorphism $\mathcal{A}^\square\to \mathcal{A}$ quotienting out the ideal $(\varphi,\del\varphi,\delbar\varphi,\delta)$.
\end{proof}
%The homotopy relation is independent of the choice of differentials on $\mathcal{A}^\square$.

\begin{proposition}
	Homotopy is an equivalence relation and homotopies can be added.
\end{proposition}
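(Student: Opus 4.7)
The plan is to verify reflexivity, symmetry, and transitivity in turn, and then deduce additivity as a structural consequence. Throughout I would lean on Proposition~\ref{extension-property} (the extension criterion via null-homotopy of the k-invariant), the inductive description of $\mathcal{A}^\square$ from Proposition~\ref{cylinder}, and the flexibility in the choice of correction terms noted in Remark~\ref{kinv-homotopy}(i).

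For \emph{reflexivity}, given a single cbba map $f\colon \mathcal{A}\to\mathcal{C}$, I would extend the diagonal $(f,f)\colon \mathcal{A}\otimes_\mathcal{B}\mathcal{A}\to\mathcal{C}$ to $\mathcal{A}^\square$ by sending every new generator $\varphi_\alpha,\del\varphi_\alpha,\delbar\varphi_\alpha$ to zero, inductively on the filtration of new generators. The obstruction at each step is the image of the defining expression $\del_\Theta\delbar_{\bar\Theta}\varphi_\alpha=\delta_\alpha+\epsilon+\del_\Theta\zeta+\delbar_{\bar\Theta}\eta$, and it vanishes because $(f,f)$ sends $\delta_\alpha=x_\alpha-y_\alpha$ to $f(x_\alpha)-f(x_\alpha)=0$ while $\epsilon,\zeta,\eta$ lie in the previous ideal, whose generators have been set to zero.

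For \emph{symmetry}, I would construct a cbba automorphism $\tau\colon\mathcal{A}^\square\to\mathcal{A}^\square$ fixing $\mathcal{B}$, swapping $x_\alpha\leftrightarrow y_\alpha$, and sending $\varphi_\alpha\mapsto -\varphi_\alpha$, $\del\varphi_\alpha\mapsto -\del\varphi_\alpha$, $\delbar\varphi_\alpha\mapsto -\delbar\varphi_\alpha$. The crucial point is that $\tau$ commutes with $\del,\delbar$, which amounts to choosing $\epsilon,\zeta,\eta$ in Proposition~\ref{cylinder} to be odd under the swap. Since $\delta_\alpha$ is odd, the right-hand sides of the three defining equations for $\epsilon,\zeta,\eta$ are automatically odd, and the odd part of any solution is again a solution (by the antisymmetrization $\tfrac{1}{2}(\epsilon-\tau\epsilon)$). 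This is exactly the flexibility flagged in Remark~\ref{kinv-homotopy}(i). Once $\tau$ is in place, $H\circ\tau$ is a homotopy from $g$ to $f$ whenever $H$ is a homotopy from $f$ to $g$.

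For \emph{transitivity and addition}, I would glue cylinders. Given $H\colon f\sim g$ and $K\colon g\sim h$, form the pushout $\mathcal{P}=\mathcal{A}^\square\cup_\mathcal{A}\mathcal{A}^\square$ identifying the $y$-end of the first cylinder with the $x$-end of the second; the pair $(H,K)$ descends to a map $\mathcal{P}\to\mathcal{C}$. I would then build a new cylinder $\widetilde{\mathcal{A}^\square}$ with endpoints $x^{(1)}$ and $y^{(2)}$ together with a cbba map $\widetilde{\mathcal{A}^\square}\to\mathcal{P}$ extending the natural inclusion on $\mathcal{A}\otimes_\mathcal{B}\mathcal{A}$. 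The k-invariant of $\widetilde{\mathcal{A}^\square}$ in $\mathcal{P}$ is the twisted Bott--Chern class of $x^{(1)}_\alpha-y^{(2)}_\alpha=(x^{(1)}_\alpha-y^{(1)}_\alpha)+(x^{(2)}_\alpha-y^{(2)}_\alpha)$ plus previous terms, so inductively it splits as a sum of the two inner k-invariants, both of which are null-homotopic in $\mathcal{C}$ by hypothesis. Composing with $\mathcal{P}\to\mathcal{C}$ gives the required homotopy $f\sim h$. The same construction turns two homotopies $H_1,H_2\colon f\sim g$ into a sum $H_1+H_2\colon f\sim g$ (by first reversing $H_2$ via symmetry to get $g\sim f$, concatenating to produce $f\sim f$, and then concatenating with any reference homotopy); the resulting operation is abelian because it corresponds, via the lemma preceding this proposition, to the addition of twisted Bott--Chern homotopy classes in $Hom(V[-1],\mathcal{C})$.

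The hardest step is \emph{symmetry}: one must revisit the inductive construction of $\mathcal{A}^\square$ and simultaneously define $\tau$ stage by stage, at each stage choosing $\epsilon,\zeta,\eta$ anti-symmetric with respect to the partially-defined $\tau$. Everything else is a formal consequence of Proposition~\ref{extension-property} and the contractibility statement in Proposition~\ref{cylinder}(ii).
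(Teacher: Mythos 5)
Your proposal is essentially the paper's proof: reflexivity from the obstruction vanishing, symmetry via an involution swapping $x$ and $y$, and transitivity/addition via gluing two cylinders (your pushout $\mathcal{P}=\mathcal{A}^\square\cup_\mathcal{A}\mathcal{A}^\square$ is the paper's double cylinder $\mathcal{B}(x,y,z;\varphi_{xy},\del\varphi_{xy},\delbar\varphi_{xy};\varphi_{yz},\del\varphi_{yz},\delbar\varphi_{yz})$) and extending the new cylinder $\mathcal{B}(x,z;\varphi_{xz},\del\varphi_{xz},\delbar\varphi_{xz})$ into it by obstruction theory. Two points of comparison are worth noting. First, your antisymmetrization of $\epsilon,\zeta,\eta$ (using $\tfrac{1}{2}(\epsilon-\tau\epsilon)$, valid since the characteristic is zero and the previous ideal is $\tau$-stable by induction) is a genuine elaboration of the paper's terse ``the involution swapping the roles of $x$ and $y$'' --- it makes the involution literally $x\leftrightarrow y$, $\varphi\mapsto -\varphi$ rather than leaving open whether lower-order corrections are needed; this is a useful clarification. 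Second, in the transitivity step, your claim that the k-invariant ``splits as a sum of the two inner k-invariants'' is not quite literal: the correction terms $\epsilon_{xz},\zeta_{xz},\eta_{xz}$ of the new cylinder are built inside $\widetilde{\mathcal{A}^\square}$ and do not decompose as sums of the correction terms from the two inner cylinders, which live in $\mathcal{P}$. The paper's formulation is both cleaner and stronger: the pushforward of the k-invariant lies in the ideal $(\varphi_{xy},\del\varphi_{xy},\delbar\varphi_{xy},\delta_{xy};\varphi_{yz},\del\varphi_{yz},\delbar\varphi_{yz},\delta_{yz})$ of $\mathcal{P}$, which is contractible by \Cref{contractibility}, so the obstruction vanishes universally in $\mathcal{P}$ --- one obtains a map $\widetilde{\mathcal{A}^\square}\to\mathcal{P}$ not depending on $\mathcal{C}$, and only afterwards composes with $(H,K)\colon\mathcal{P}\to\mathcal{C}$. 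Your version, which pushes to $\mathcal{C}$ first and invokes the hypothesis that $H,K$ exist, reaches the same conclusion but is slightly less robust; you should either adopt the contractible-ideal argument or, if you want the splitting, explicitly choose $\epsilon_{xz},\zeta_{xz},\eta_{xz}$ using the freedom of Remark~\ref{kinv-homotopy}(i) so that the pushforward to $\mathcal{P}$ agrees with $\del\delbar(\varphi_{xy}+\varphi_{yz})$.
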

\begin{proof}
	The proof goes by induction using \Cref{extension-property}. We inductively extend the inclusion
	\[
	\mathcal{B}(x,z)\to \mathcal{B}(x,y,z;\varphi_{xy},\del\varphi_{xy},\delbar\varphi_{xy};\varphi_{yz},\del\varphi_{yz},\delbar\varphi_{yz})
	\]
	to $\mathcal{B}(x,z;\varphi_{xz},\del\varphi_{xz},\delbar\varphi_{xz})$. The inductive obstruction is the push-forward of the k-invariant (\Cref{kinv-homotopy}), which has image in ideal $(\varphi_{xy},\del\varphi_{xy},\delbar\varphi_{xy},\delta_{xy};\varphi_{yz},\del\varphi_{yz},\delbar\varphi_{yz},\delta_{yz})$. Since this ideal is contractible we can proceed without obstruction. This proves homotopies can be added and transitivity. Symmetry follows from the involution swapping the roles of $x$ and $y$ on $\mathcal{A}^\square$.
\end{proof}

\subsection{Obstruction theory}
\begin{theorem}
	Let $\mathcal{A}$ be a successive linear Hirsch extension of $\mathcal{B}$. The bicomplex of new generators will be called the \textbf{dual homotopy} of $(\mathcal{A},\mathcal{B})$ and denoted by $\pi(\mathcal{A},\mathcal{B})$. Then
	\begin{enumerate}[(i)]
		\item (Existence) the inductive construction of extending a map $\nu: \mathcal{B}\to \mathcal{C}$ to $\mathcal{A}$ is obstructed by a sequence of twisted homotopy class in $[\pi(\mathcal{A},\mathcal{B})[-1],\mathcal{C}]$; and
		\item (Uniqueness) the inductive construction of extending a homotopy between maps $\mathcal{A}\to\mathcal{C}$ rel $\nu$is obstructed by a sequence of twisted homotopy class in $[\pi(\mathcal{A},\mathcal{B}),\mathcal{C}]$.
%		\item Moreover if the dual homotopy is concentrated in degree $(p,q)$ then the obstructions to existence and uniqueness lie in $H^{p+q}_{S_{p,q}}$ and $H^{p+q-1}_{S_{p,q}}$ respectively; the latter coincides with $H^{p,q}_{BC}$.
	\end{enumerate}
\end{theorem}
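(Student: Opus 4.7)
The plan is to treat both parts as iterated applications of \Cref{extension-property} along the given successive linear Hirsch filtration. Write $\mathcal{A}$ as $\mathcal{B}=\mathcal{A}_{-1}\subset\mathcal{A}_0\subset\mathcal{A}_1\subset\cdots$, each inclusion a linear Hirsch extension by a bicomplex $V_k$ of new generators with k-invariant $\Phi_k\in[V_k[-1],\mathcal{A}_{k-1}]_{\Theta_k,\bar\Theta_k}$, so that $\pi(\mathcal{A},\mathcal{B})=\bigoplus_k V_k$ as bicomplexes.

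For part (i), suppose $\nu$ has been extended to $\nu_{k-1}:\mathcal{A}_{k-1}\to\mathcal{C}$. By \Cref{extension-property}, $\nu_{k-1}$ extends to $\mathcal{A}_k$ if and only if the push-forward $\nu_{k-1}\Phi_k$ is null-homotopic with respect to the local systems $\nu_{k-1}\Theta_k,\nu_{k-1}\bar\Theta_k$. The step-$k$ obstruction is thus the twisted homotopy class $[\nu_{k-1}\Phi_k]\in[V_k[-1],\mathcal{C}]_{\nu_{k-1}\Theta_k,\nu_{k-1}\bar\Theta_k}\subset[\pi(\mathcal{A},\mathcal{B})[-1],\mathcal{C}]$; when it vanishes, \Cref{extension-property} also furnishes a next-step extension from a chosen null-homotopy, and iterating produces the required sequence.

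For part (ii), a homotopy rel $\nu$ between two extensions $f,g:\mathcal{A}\to\mathcal{C}$ is by definition an extension of the combined map $(f,g):\mathcal{A}\otimes_\mathcal{B}\mathcal{A}\to\mathcal{C}$ to the cylinder $\mathcal{A}^\square$. By \Cref{cylinder}, $\mathcal{A}^\square$ is itself a successive linear Hirsch extension of $\mathcal{A}\otimes_\mathcal{B}\mathcal{A}$: for each generator $x_\alpha\in V_k$ of bidegree $(p,q)$ it adjoins $\varphi_\alpha,\del\varphi_\alpha,\delbar\varphi_\alpha$ in bidegrees $(p-1,q-1),(p,q-1),(p-1,q)$, and as a bicomplex these assemble into $\Lpic\otimes V_k=V_k[1]$. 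Applying (i) to this filtration places the step-$k$ obstructions in $[V_k[1][-1],\mathcal{C}]$, which via the quasi-isomorphism $\bullet\to\Lpic\otimes\revLpic$ is identified with $[V_k,\mathcal{C}]\subset[\pi(\mathcal{A},\mathcal{B}),\mathcal{C}]$; under the identification of \Cref{kinv-homotopy}(ii), this obstruction is precisely the push-forward to $\mathcal{C}$ of the twisted Bott-Chern class of $\delta_\alpha$ plus previous terms.

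The main care required, beyond bookkeeping twisted coefficients under each successive push-forward, is to verify the degree-shift compatibility: the $[-1]$ of part (i) combines with the $[+1]$ inherent in the cylinder's new-generator bicomplex to land the step-$k$ obstruction in $[V_k,\mathcal{C}]$ rather than a doubly shifted cohomology, and to confirm that this class matches the Bott-Chern class from \Cref{kinv-homotopy}(ii). Once these identifications are pinned down, the argument is a mechanical iteration of \Cref{extension-property} through the cylinder construction of \Cref{cylinder}; the genuine obstruction-theoretic content has already been isolated in those earlier results.
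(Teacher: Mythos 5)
Your proof is correct and follows the same route as the paper's (which is stated in two lines): existence by iterating \Cref{extension-property} along the filtration, and uniqueness by applying the same to the cylinder of \Cref{cylinder} and identifying the resulting k-invariant via \Cref{kinv-homotopy}(ii). The degree-shift bookkeeping you verify — that $V_k[1][-1]\simeq V_k$ cancels the shift and lands the uniqueness obstruction in $[V_k,\mathcal{C}]$ — is exactly the content the paper compresses into "follows from our definition of homotopy by \Cref{kinv-homotopy}."
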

\begin{proof}
	The existence part is \Cref{extension-property}. The uniqueness part follows from our definition of homotopy by \Cref{kinv-homotopy}.
\end{proof}
In short, we can say extending a map amounts to solving the structure equations
	\[
	\del_\Theta x =\phi, \delbar_{\bar\Theta}x=\bar\phi;
	\]
and extending a homotopy amounts to solving the structure equation
	\[
	\del_\Theta\delbar_{\bar\Theta} \varphi=\delta+\epsilon+\del_\Theta\zeta+\delbar_{\bar\Theta}\eta.
	\]

This type of equation-solving argument further proves that:
\begin{theorem}\label{obstruction theory} Let $\mathcal{A}$ be a successive linear Hirsch extension over $\mathcal{B}$.
Given $\nu:\mathcal{B}\to\mathcal{C}$, $p:\mathcal{C}'\to \mathcal{C}$ and a lifting $\nu':\mathcal{B}\to \mathcal{C}'$ such that $p\nu'=\nu$.
\[
	\begin{tikzcd}
		 \mathcal{B}\ar[r,"\nu'"]\ar[d,"i"']&\mathcal{C}'\ar[d,"p"]\\
		 \mathcal{A}\ar[r,"f"']\ar[ur,dashed,"f'"]& \mathcal{C}
	\end{tikzcd}
	\]
\begin{enumerate}[(i)]
	\item The inductive construction of lifting $\mathcal{A}\xrightarrow{f}\mathcal{C}$ to $\mathcal{A}\xrightarrow{f'}\mathcal{C}'$ \underline{up to homotopy}, meaning that $pf'$ is homotopic to $f$, is obstructed by a sequence of twisted homotopy classes in $[\pi(\mathcal{A},\mathcal{B}),\cone(p)]$.
	\item Moreover, if $p$ is a surjective quasi-isomorphism for coefficients in $\pi(\mathcal{A},\mathcal{B})$, then we can find a lifting $f'$ such that $pf'=f$.
\end{enumerate}
\end{theorem}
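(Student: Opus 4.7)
The plan is to imitate the inductive equation-solving argument from Proposition \ref{extension-property} and the cylinder construction in Proposition \ref{cylinder}, but at each step to simultaneously extend both the lift $f'$ into $\mathcal{C}'$ and a homotopy witnessing $pf'\simeq f$ in $\mathcal{C}$. The two obstructions will package together into a single class valued in the mapping cone of $p$.

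The inductive setup goes as follows. Let $\mathcal{A}_k$ denote the intermediate cbba after $k$ linear Hirsch extensions, with $\mathcal{A}_0=\mathcal{B}$ and $f'_0=\nu'$. Assume by induction a cbba map $f'_k:\mathcal{A}_k\to\mathcal{C}'$ extending $\nu'$, together with a homotopy $h_k\in hHom(\mathcal{A}_k,\mathcal{C};\nu)$ from $pf'_k$ to $f|_{\mathcal{A}_k}$, i.e.\ a cbba map $h_k:\mathcal{A}_k^\square\to\mathcal{C}$ restricting to $pf'_k$ and $f|_{\mathcal{A}_k}$ on the two ends. The inductive step adds a bicomplex of new generators $V=V_{k+1}$ with k-invariant $\Phi:V[-1]\to\mathcal{A}_k$ and commuting local systems $\Theta,\bar\Theta$. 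To extend $f'_k$ we need a null-homotopy $H':V\to\mathcal{C}'$ of $f'_k\Phi$ with respect to $f'_k\Theta,f'_k\bar\Theta$, by Proposition \ref{extension-property}; to extend $h_k$ to $\mathcal{A}_{k+1}^\square$ we need to solve the structure equation from Proposition \ref{cylinder} on the new cylinder generators $\varphi,\del\varphi,\delbar\varphi$, with right-hand side built from $H'$, $f|_V$ and the restriction $h_k|_V$.

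The key step is to package these two requirements into a single twisted Bott-Chern problem. Using the distinguished triangle $\mathcal{C}'\xrightarrow{p}\mathcal{C}\to\cone(p)$, the long exact sequence of \cite[Corollary 11.2]{stelzig2025pluripotential} applied to $Hom(V[-1],-)$ with compatible local systems (compatible because $pf'_k$ and $f|_{\mathcal{A}_k}$ are identified up to homotopy by $h_k$), combined with the explicit reduced-cone description of \Cref{special-triangle}(iii), the triple $(f'_k\Phi,\,f|_V,\,h_k|_V)$ assembles into a single twisted Bott-Chern class $\omega_{k+1}\in[V[-1],\cone(p)[-1]]$, and shift-invariance via $\bullet\to\revLpic\otimes\Lpic$ identifies this with a class in $[V,\cone(p)]=[\pi(\mathcal{A},\mathcal{B}),\cone(p)]$. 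I would then verify that the inductive step goes through precisely when $\omega_{k+1}=0$, giving the obstruction sequence claimed by the theorem.

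For part (ii), if $p$ is a surjective quasi-isomorphism with the local coefficients relevant to $\pi$, then $\cone(p)$ is contractible with the twisted differentials, so each $\omega_k$ vanishes automatically and homotopy liftings exist at every stage. To upgrade from homotopy lifting to a strict lift $pf'=f$, I would argue inductively: once $pf'_k=f|_{\mathcal{A}_k}$ strictly, surjectivity of $p$ lets me pick any set-theoretic lift $H_0:V\to\mathcal{C}'$ of $f|_V$, and solving the structure equations for $H'=H_0+(\text{correction})$ reduces to a twisted Bott-Chern problem for the correction inside $\ker(p)\otimes V$, which is solvable because $\ker(p)$ is contractible with the given twisted coefficients (being the homotopy fiber of a quasi-isomorphism). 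The main obstacle will be rigorously verifying that $\omega_{k+1}$ is well-defined on twisted Bott-Chern cohomology and genuinely records both obstructions at once; this needs careful bookkeeping of how $h_k$ transports the local systems between $\mathcal{C}'$ and $\mathcal{C}$ and of how the differential formulas of \Cref{special-triangle}(iii) interact with the cylinder structure equations from Proposition \ref{cylinder}.
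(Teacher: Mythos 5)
Your proposal follows essentially the same route as the paper: package the two simultaneous requirements---extending the lift into $\mathcal{C}'$ and extending the homotopy into $\mathcal{C}$---as a single obstruction cocycle valued in $\cone(p)$ via the reduced-cone description of \Cref{special-triangle}(iii), then show vanishing of its twisted Bott--Chern class is equivalent to solvability; for part (ii) use contractibility of $\ker p$ to correct an arbitrary set-theoretic lift. Two small comments. First, what you call ``$h_k|_V$'' is a bit imprecise: the ingredient actually needed is the cylinder correction data $\epsilon,\zeta,\eta$ produced as in \Cref{cylinder}, not a restriction of the homotopy itself; the paper writes down the explicit cocycle $C$ whose first coordinate is $x+\epsilon+\del\zeta+\delbar\eta$ and whose second coordinate records the push-forward structure data $(\phi',\bar\phi')$, and then verifies directly by computation---using the freedom to modify $\epsilon,\zeta,\eta$ noted in \Cref{kinv-homotopy}(i)---that $C$ is $\del_\Theta\delbar_{\bar\Theta}$-exact iff the needed $y'$ and $\varphi$ exist, bypassing the long exact sequence argument you sketch. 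Second, in part (ii) your two-stage approach (first homotopy-lift, then upgrade to a strict lift) is redundant: once $p$ is surjective one lifts $f|_V$ set-theoretically and corrects using $Hom(\pi,\ker p[1])\simeq Hom(\pi,\cone(p))$ being contractible, exactly as in your second stage, with no need to invoke the homotopy-lifting conclusion first.
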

\begin{proof} We are given $x\to \mathcal{C}$, and need to find $y'\to \mathcal{C}'$ and $\varphi\to \mathcal{C}$ solving the equations (suppressing local coefficients for simplicity)
\begin{enumerate}[(i)]
	\item in $\mathcal{C}'$, $\del y'=\phi'$, $\del\bar y'=\bar\phi'$;
	\item in $\mathcal{C}$, $\del\delbar \varphi=\delta+\epsilon+\del\zeta+\delbar\eta$.
\end{enumerate}
Now define an element $C$ of $\cone(p)$ by
	$C:=\left(x+\epsilon+\del\zeta+\delbar\eta,
		\begin{tikzcd}[row sep=tiny, column sep=tiny]
	-\bar\phi' \\
	\delbar\phi' & \phi'
	\arrow[no head, from=2-1, to=1-1]
	\arrow[no head, from=2-1, to=2-2]
\end{tikzcd}\right)$.
	Note that $\varepsilon,\zeta,\eta$ are constructed in $\mathcal{C}$ by previous steps as in \Cref{cylinder}, such that
	\[
	\del C=\delbar C=0.
	\]
	Therefore $C$ defines a class in $[\pi,\cone(p)]$. Recall that $C$ is null-homotopic iff $C=\del\delbar B$ for some $B$.
	
	If a lifting $y'$ and a homotopy $\varphi$ can be found then we see that 
	$B:=\left(\varphi,\begin{tikzcd}[row sep=tiny, column sep=tiny]
	0 \\
	-y' & 0
	\arrow[no head, from=2-1, to=1-1]
	\arrow[no head, from=2-1, to=2-2]
\end{tikzcd}\right)$
	satisfies
	\[
	\del\delbar B=C
	\]
	by the equations (i) and (ii) above. Conversely if $C=\del\delbar B$ for some $B=\left(\varphi,\begin{tikzcd}[row sep=tiny, column sep=tiny]
	c' \\
	-y' & b'
	\arrow[no head, from=2-1, to=1-1]
	\arrow[no head, from=2-1, to=2-2]
\end{tikzcd}\right)$, then we can modify $\varepsilon,\zeta,\eta$ to $\varepsilon+y, \zeta-b,\eta-c$ such that $C=\del\delbar\left(\varphi,\begin{tikzcd}[row sep=tiny, column sep=tiny]
	0 \\
	-y' & 0
	\arrow[no head, from=2-1, to=1-1]
	\arrow[no head, from=2-1, to=2-2]
\end{tikzcd}\right)$, thus giving a lifting $y'$ and a homotopy $\varphi$. This completes the proof of the first assertion.

Now if $p$ is surjective, we can find a candidate $x'\to \mathcal{C}'$ by directly lifting $x\to\mathcal{C}$. Then
\[
\left(\begin{tikzcd}[row sep=tiny, column sep=tiny]
	(\delbar x'-\bar\phi') \\
	(\del\delbar x'-\delbar\bar\phi') & (\phi'-\del x')
	\arrow[no head, from=2-1, to=1-1]
	\arrow[no head, from=2-1, to=2-2]
\end{tikzcd}\right)
\]
defines a cocycle of $Hom(\pi,\ker\pi[1])\simeq Hom(\pi,\cone(p))$ which is null-homologous. Then we use a homotopy to modify $x'$ so that $\del x'=\phi'$, $\delbar x'=\bar\phi'$.
\end{proof}
\begin{corollary}
	If $p$ is a quasi-isomorphism with respect to local coefficients in $\pi(\mathcal{A},\mathcal{B})$, then it induces a bijection
	\[
	hHom(\mathcal{A},\mathcal{C}';\nu')\to hHom(\mathcal{A},\mathcal{C};\nu).
	\]
\end{corollary}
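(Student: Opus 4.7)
The plan is to derive both surjectivity and injectivity of $p_*$ from two applications of \Cref{obstruction theory}(i); the hypothesis on $p$ forces the relevant obstruction groups to vanish. The unifying observation is that ``$p$ is a quasi-isomorphism for local coefficients in $\pi(\mathcal{A},\mathcal{B})$'' translates, via the long exact sequence of the distinguished triangle $\mathcal{C}'\to\mathcal{C}\to\cone(p)$, into the statement that $\cone(p)$ has vanishing twisted Bott-Chern and Aeppli cohomology for those coefficients, i.e.\ admits contracting homotopies in the sense of the contracting homotopy lemma; consequently $[V,\cone(p)]=0$ for any bicomplex $V$ on which these twistings act through $\pi(\mathcal{A},\mathcal{B})$.

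For surjectivity, given $[f]\in hHom(\mathcal{A},\mathcal{C};\nu)$, I apply \Cref{obstruction theory}(i) to the lifting problem determined by $f,\nu,\nu',p$. The inductive obstructions live in $[\pi(\mathcal{A},\mathcal{B}),\cone(p)]$, which vanishes by the preceding observation; hence there exists $f':\mathcal{A}\to\mathcal{C}'$ extending $\nu'$ with $pf'\simeq f$ rel $\nu$, so $p_*[f']=[f]$. For injectivity, take two extensions $f_0',f_1'$ of $\nu'$ with $pf_0'\simeq pf_1'$ rel $\nu$ represented by a homotopy $h:\mathcal{A}^\square\to\mathcal{C}$, and seek $h':\mathcal{A}^\square\to\mathcal{C}'$ extending $(f_0',f_1')$. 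By \Cref{kinv-homotopy}(ii), $\mathcal{A}^\square$ is itself a successive linear Hirsch extension of $\mathcal{A}\otimes_{\mathcal{B}}\mathcal{A}$, so \Cref{obstruction theory}(i) applies to the square
\[
\begin{tikzcd}
\mathcal{A}\otimes_{\mathcal{B}}\mathcal{A}\ar[r,"{(f_0',f_1')}"]\ar[d] & \mathcal{C}'\ar[d,"p"]\\
\mathcal{A}^\square\ar[r,"h"]\ar[ur,dashed,"h'"] & \mathcal{C}
\end{tikzcd}
\]
and produces $h'$ subject to vanishing of obstructions in $[\pi(\mathcal{A}^\square,\mathcal{A}\otimes_{\mathcal{B}}\mathcal{A}),\cone(p)]$.

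The main remaining step, which I expect to be the chief obstacle, is to show these second obstructions vanish. From \Cref{kinv-homotopy}(ii) and the construction in \Cref{cylinder}, $\pi(\mathcal{A}^\square,\mathcal{A}\otimes_{\mathcal{B}}\mathcal{A})$ contributes a single $\Lpic$-shaped triple $(\varphi_\alpha,\del\varphi_\alpha,\delbar\varphi_\alpha)$ for each generator $x_\alpha$ of $\pi(\mathcal{A},\mathcal{B})$, and the twisting coefficients inherited from the cylinder construction are the same $\pi(\mathcal{A},\mathcal{B})$-type twistings as those on $\mathcal{A}$ over $\mathcal{B}$. Since $\cone(p)$ admits contracting homotopies for these twistings, the contracting homotopy lemma yields contracting homotopies on $Hom(\pi(\mathcal{A}^\square,\mathcal{A}\otimes_{\mathcal{B}}\mathcal{A}),\cone(p))$, killing the obstruction. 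The resulting lift $h'$ then supplies the desired homotopy $f_0'\simeq f_1'$ rel $\nu'$, completing the injectivity argument.
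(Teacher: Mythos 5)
Your argument is correct and, as far as the paper's one-line proof can be compared against anything, it is the expansion the paper evidently intends: both halves of the bijection reduce to vanishing of lifting obstructions of the form $[\pi,\cone(p)]$. Surjectivity is an immediate application of \Cref{obstruction theory}(i). For injectivity you correctly identify that one must lift the homotopy $h:\mathcal{A}^\square\to\mathcal{C}$ to $\mathcal{C}'$ by viewing $\mathcal{A}^\square$ as a successive linear Hirsch extension of $\mathcal{A}\otimes_{\mathcal{B}}\mathcal{A}$ (via \Cref{kinv-homotopy}(ii)) and invoking \Cref{obstruction theory}(i) a second time; the hypothesis that $p$ is a quasi-isomorphism for coefficients in $\pi(\mathcal{A},\mathcal{B})$ kills the relevant obstruction group because the cylinder generators inherit the same twisting coefficients (pushed forward through $\nu'$). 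One small stylistic point: you phrase the vanishing of $[V,\cone(p)]$ as passing through the existence of contracting homotopies on $\cone(p)$; this is true (contractibility of a bicomplex is equivalent to being a direct sum of squares, which do carry contracting homotopies), but it is not necessary — the hypothesis on $p$ directly says the twisted Bott--Chern and Aeppli cohomology of $\cone(p)$ vanish for these coefficients, and the obstruction classes live precisely in those groups. This does not affect the correctness of the argument.
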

\begin{proof}
	The obstructions to existence and uniqueness all vanish.
\end{proof}

\subsection{Algebraic fibration}
In this subsection, we treat the inductive step in proving the uniqueness of minimal models. The situation we encounter is a pair $\mathcal{B}\subset \mathcal{E}$ of connected cbba's in which $\mathcal{E}$ is a free module over $\mathcal{B}$ so that $\mathcal{E}=\mathcal{B}\otimes \mathcal{F}$. Through the isomorphism $\mathcal{F}=\mathcal{E}/\text{ideal }\mathcal{B}^+$, $\mathcal{F}$ inherits the structure of a cbba. The pair $(\mathcal{E},\mathcal{B})$ will be called an \textit{algebraic fibration} and $\mathcal{E}$, $\mathcal{B}$, $\mathcal{F}$ are called the \textit{total space}, the \textit{base} and the \textit{fiber} respectively.
The prototype of an algebraic fibration is a linear Hirsch extension.
%Any map between algebraic fibrations can be factored into one with equal base followed by one with equal fiber.
%
%We first analyze the "equal base" case. Let $\mathcal{E},\mathcal{E}'$ be fibered over the same base $\mathcal{B}$  with fibers $\mathcal{F}$, $\mathcal{F}'$ respectively, and let $f:\mathcal{E}\to \mathcal{E}'$ a cbba map over $\mathcal{B}$.
\begin{proposition}
%[equal base, fiber to total space]
If $\mathcal{F}\xrightarrow{f}\mathcal{F}'$ is a quasi-isomorphism (ordinary coefficients), then $\mathcal{E}\xrightarrow{f}\mathcal{E}'$ is a quasi-isomorphism for all coefficients in $\mathcal{B}$.
\end{proposition}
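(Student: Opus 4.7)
The plan is to filter $\mathcal{E}$ and $\mathcal{E}'$ by powers of the base $\mathcal{B}$ and reduce the claim to the corresponding statement on the associated graded, where the fiber's differential decouples from the base and the hypothesis applies directly.

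First I would introduce the descending \emph{base filtration} $F^p\mathcal{E}:=\mathcal{B}^{\ge p}\cdot \mathcal{E}$, where $\mathcal{B}^{\ge p}$ denotes the part of $\mathcal{B}$ in total degree $\ge p$, and $F^p\mathcal{E}'$ analogously. Since the algebraic fiberation structure gives $\del x,\delbar x\in \mathcal{B}\otimes\mathcal{F}$ for $x\in\mathcal{F}$, both differentials on $\mathcal{E}$ preserve this filtration, as does the $\mathcal{B}$-linear map $f$. The filtration is degreewise bounded ($F^p\mathcal{E}\cap\mathcal{E}^n=0$ for $p>n$), and the associated graded $\mathrm{gr}^p\mathcal{E}\cong \mathcal{B}^p\otimes\mathcal{F}$ carries the \emph{pure tensor-product} bicomplex differential: any contribution to $\del x$ (or $\delbar x$) valued in $\mathcal{B}^+\otimes\mathcal{F}$ raises the filtration after multiplication by $\mathcal{B}^p$ and therefore vanishes in $\mathrm{gr}^p$.

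For a commuting pair of twisting coefficients $(V,\Theta,\bar\Theta)$ with $\Theta,\bar\Theta\colon V\to \mathcal{B}^+\otimes V$, I would extend the filtration by $F^p\bigl(Hom(V,\mathcal{E})\bigr):= Hom(V,F^p\mathcal{E})$. The twisting term $(1\otimes\psi)\Theta$ in $\del_\Theta\psi$ sends $V$ into $\mathcal{B}^+\cdot F^p\mathcal{E}\subset F^{p+1}\mathcal{E}$, so both $\del_\Theta$ and $\delbar_{\bar\Theta}$ preserve this filtration, and the induced differential on $Hom(V, \mathcal{B}^p\otimes\mathcal{F})$ is the \emph{untwisted} one. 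The map induced by $f$ on the $p$-th graded piece is $1\otimes f_\mathcal{F}$, which is a quasi-isomorphism because tensoring a bicomplex quasi-isomorphism with any bicomplex preserves quasi-isomorphism (a standard consequence of the square/zigzag decomposition already exploited in \Cref{connectivity}), and applying $Hom(V,-)$ likewise does.

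To conclude, I would induct downward on $p$, applying the 5-lemma to the long exact sequences in $H_{BC}$ and $H_A$ arising (via \Cref{special-triangle}(i)) from the short exact sequences of twisted $Hom$-bicomplexes
\[
0\to Hom(V,F^{p+1}\mathcal{E})\to Hom(V,F^p\mathcal{E})\to Hom(V,\mathrm{gr}^p\mathcal{E})\to 0,
\]
and likewise for $\mathcal{E}'$. The main obstacle I anticipate is the bookkeeping needed to control $H_{BC}$ and $H_A$ simultaneously in the presence of twisting; fortunately the degreewise boundedness of the filtration reduces everything to a finite induction in each total degree, and the twisting washes out on the associated graded precisely because the local systems are valued in $\mathcal{B}^+$.
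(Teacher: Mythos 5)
Your proof is correct and follows essentially the same approach as the paper: the paper's one-line argument ("Notice that $\mathcal{B}^{\ge k}\otimes\mathcal{F}$ are sub-bicomplexes for all twisting coefficients in $\mathcal{B}$, and the induced differentials on the relative quotients $\mathcal{B}^k\otimes\mathcal{F}$ are trivial on the $\mathcal{B}$-factor") is precisely your base filtration $F^p\mathcal{E}=\mathcal{B}^{\ge p}\otimes\mathcal{F}$ with the same observation that both the twisting term $(1\otimes\psi)\Theta$ and the $\mathcal{B}$-part of the differential raise filtration. You have simply spelled out the remaining bookkeeping (degreewise boundedness, the $5$-lemma on the long exact sequences of $H_A$ and $H_{BC}$) that the paper leaves implicit.
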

\begin{proof}
	Notice that $\mathcal{B}^{\ge k}\otimes \mathcal{F}$ are sub-bicomplexes for all twisting coefficients in $\mathcal{B}$, and the induced differentials on the relative quotients $\mathcal{B}^k\otimes \mathcal{F}$ are trivial on the $\mathcal{B}$-factor.
\end{proof}
\begin{proposition}
%[equal base, total space to fiber]
	 Suppose $\mathcal{F}$, $\mathcal{F}'$ are generalized solvable (recall \Cref{defn:solvable-nilpotent}), and $ \mathcal{E}\xrightarrow{f}\mathcal{E}'$ is a quasi-isomorphism for all twisting coefficients\footnote{not all coefficients are needed, see proof and remark below.}. Then $f$ is an actual cbba isomorphism on fibers $\mathcal{F}$, $\mathcal{F}'$. Therefore $\mathcal{E}$, $\mathcal{E}'$ are isomorphic as cbba's.
\end{proposition}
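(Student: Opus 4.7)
The plan is to reduce the statement to showing that $f|_\mathcal{F}\colon\mathcal{F}\to\mathcal{F}'$ is a quasi-isomorphism, and then to upgrade that qiso to a genuine algebra isomorphism by inducting along the solvable series, using \Cref{cocycle-exists} and the obstruction theory of \Cref{obstruction theory} as the rigidity tools.

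First I would pass to fibers via the $\mathcal{B}$-filtration $\mathcal{B}^{\ge p}\otimes\mathcal{F}$ on $\mathcal{E}=\mathcal{B}\otimes\mathcal{F}$, which is a sub-bicomplex for any twisting coefficient in $\mathcal{B}$ by the preceding proposition. Because a twisting $\Theta\colon V\to\mathcal{B}^+\otimes V$ strictly raises the $\mathcal{B}$-filtration, it vanishes on the associated graded, so each gr-piece $\mathcal{B}^p\otimes\mathcal{F}\otimes V$ carries only the untwisted differential on $V$ and computes $\mathcal{B}^p\otimes H(\mathcal{F}\otimes V)$. A bounded-below spectral sequence comparison (or a direct degree-by-degree argument) then extracts from the twisted qiso hypothesis on total spaces the statement that $f|_\mathcal{F}\otimes\mathrm{id}_V\colon\mathcal{F}\otimes V\to\mathcal{F}'\otimes V$ is a qiso. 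Only those $V$ actually needed in the subsequent induction enter here, which matches the footnote's caveat.

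Next I would induct along the solvable series of $\mathcal{F}$ to upgrade this qiso to an iso. For the base case, let $i$ be the smallest positive degree at which $\mathcal{F}^i\ne 0$; whenever decomposables vanish in degree $i+2$ for dimension reasons (as in the proof of \Cref{cocycle-exists}), $\del\delbar$ is identically zero on $\mathcal{F}^i$ and one has $\mathcal{F}^i\hookrightarrow H^i_A\mathcal{F}$. The same analysis holds for $\mathcal{F}'$, and the qiso forces $f$ to identify $\mathcal{F}^i$ with $(\mathcal{F}')^i$. For the inductive step, regard $F_k\mathcal{F}\subset F_{k+1}\mathcal{F}$ as a linear Hirsch extension with k-invariant $\Phi_{k+1}$; its image under $f$ gives a matching k-invariant on the $\mathcal{F}'$ side. \Cref{obstruction theory}, applied with twisting coefficients given by the local systems of this particular extension, extends the iso to the next stage: since $f$ is a qiso for precisely these coefficients, the obstructions living in the cone vanish. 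Iterating yields that $f|_\mathcal{F}\colon\mathcal{F}\to\mathcal{F}'$ is an iso, and hence $\mathcal{E}\cong\mathcal{E}'$ as cbba's over $\mathcal{B}$.

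The main obstacle will be the inductive step, since $f$ is not a priori compatible with any preferred solvable series on $\mathcal{F}'$; one must choose a solvable series adapted to $f$ at each stage, and verify that the k-invariants really do correspond across $f$ in the twisted-homotopy sense. Handling small base-case degrees, where decomposables in degree $i+2$ may be nonzero, will also require extra care—most cleanly by treating the first few filtration steps as a separate, manual matching before the induction proper begins.
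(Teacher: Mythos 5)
Your step 1 is the gap. You propose to extract that $\bar f\colon\mathcal{F}\to\mathcal{F}'$ (and more strongly $\bar f\otimes\mathrm{id}_V$) is a quasi-isomorphism from the twisted quasi-isomorphism on total spaces, via the filtration $\mathcal{B}^{\ge p}\otimes\mathcal{F}$. But the spectral sequence comparison runs in the wrong direction: a quasi-isomorphism of filtered objects gives an isomorphism on $E_\infty$, not on $E_0$ or $E_1$, so it does not pass to the associated graded. (Contrast the preceding proposition in the paper, which correctly goes from a fiber qiso to a total-space qiso; the converse you need is false without further input.) There is also a secondary issue in step 2: even granting a fiber quasi-isomorphism, you would need it for all \emph{twisted} action coefficients on $\mathcal{F}$ (not just ordinary ones) to invoke anything like \Cref{uniqueness-solvable}, because the fibers are only generalized solvable, not nilpotent; your filtration argument would at best produce untwisted information. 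And you are right to worry about step 2's dependence on matching solvable series across $f$ -- the proposal does not resolve that, and it is precisely what a correct argument must handle.

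The paper's proof never passes to a fiber quasi-isomorphism at all, which is how it avoids both issues. It works on the total spaces degree by degree: at the lowest degree $i$ with $\mathcal{F}^i\neq 0$, it isolates a \emph{canonical} (series-independent) bicomplex $V_i\subset\mathcal{F}^i\oplus\mathcal{F}^{i+1}$ -- concretely the $\tau_{\le i}$-truncation of the lowest-degree tautological local system on $\mathcal{B}$ -- and verifies by type-checking that $\mathcal{B}\otimes\bigwedge V_i\hookrightarrow\mathcal{E}$ is a linear Hirsch extension. Then the lifting and uniqueness statements of \Cref{obstruction theory}, applied along $f\colon\mathcal{E}\to\mathcal{E}'$ with exactly those action coefficients, show that the induced map $V_i\to V_i'$ is injective; a symmetric lifting argument gives surjectivity; and \Cref{cocycle-exists} is used only to guarantee $V_i$ and $V_j'$ are nonzero so the two ``lowest degrees'' agree. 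The argument iterates by replacing $\mathcal{B}$ with $\mathcal{B}\otimes\bigwedge V_i$. Because $V_i$ is defined intrinsically from the differentials, the ``matching k-invariant'' and ``preferred series'' concerns you raise never arise. Your instinct to use \Cref{cocycle-exists} and \Cref{obstruction theory} is right, and the degree-by-degree idea is the correct one, but those tools must be applied on the total spaces to a canonically defined piece of the fiber, rather than after an (unavailable) reduction to a fiber quasi-isomorphism.
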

\begin{proof}
The proof is based on the observation that new generators in the fibers are reflected in relative cohomology groups, and we use obstruction theory to identify new generators in the fibers degree-by-degree.

	Let $i$ be the first positive degree for which $\mathcal{F}^i\neq 0$, the $\mathcal{E}$-differentials take $\mathcal{F}^i$ into $\mathcal{F}^{i+1}+\mathcal{B}^{i+1}+\mathcal{B}^1\otimes\mathcal{F}^{i}$. Therefore $U_i:=\mathcal{F}^i+(\im\del_\mathcal{F}+\im\delbar_\mathcal{F})^{i+1}+(\im\del_\mathcal{F}\delbar_\mathcal{F})^{i+2}$ defines (tautological) local coefficients on $\mathcal{B}$. We claim that
	\[
	V_i:=\left(\ker\del_{\mathcal{F}}\delbar_{\mathcal{F}}\right)^i+\left(\ker\del_{\mathcal{F}}\cap\ker\delbar_{\mathcal{F}}\cap(\im\del_{\mathcal{F}}+\im\delbar_{\mathcal{F}})\right)^{i+1}
	\]
	defines sub-coefficients and consequently a linear Hirsch extension $\mathcal{B}\otimes\bigwedge V_i$ over $\mathcal{B}$. Indeed for dimension reasons $H^i U_i=V_i$ and thus we can write $U_i$ as a direct sum of $V_i$ and
	\[
\begin{tikzcd}[row sep=small, column sep=small]
 	\del y\ar[r] & \del\delbar y\\
 	y \ar[u]\ar[r]& \delbar y\ar[u]
\end{tikzcd}
	\]
For $x$ standing for a basis of $(V_i)^i$, we can write $\del x=\del_V x+\phi x+\Theta x+\Lambda y$. Then by analyzing types of the terms from the equation $\del^2=0$, we see that $\Lambda.\del y=0$ which in turn implies $\Lambda=0$; similarly we have $\bar\Lambda=0$. This proves the claim.
 
	Now consider $V_i'\subset\mathcal{F}'$ similarly defined and the map $V_i\xrightarrow{f} V_i'$, we argue that this map is injective. First notice that since $V_i$ defines local coefficients, so does $W_i:=\im V_i\subset V_i'$. Now consider the diagram:
	\[
	\begin{tikzcd}
		\mathcal{B}\arrow[r, hook]\arrow[d,hook] & \mathcal{E}\ar[d,"f"]\\
		\mathcal{B}\otimes\bigwedge W_i \arrow[r, hook]\ar[ur,dashed]&\mathcal{E}'
	\end{tikzcd}
	\]
By obstruction theory (\Cref{obstruction theory}) the canonical embedding $\mathcal{B}\otimes\bigwedge W_i\hookrightarrow\mathcal{E}'$ can be lifted to a map into $\mathcal{E}$ up to homotopy (rel $\mathcal{B}$). Then consider the (commutative) diagram
\[
	\begin{tikzcd}
		\mathcal{B}\arrow[r, hook]\arrow[d,hook] & \mathcal{E}\ar[d,"f"]\\
		\mathcal{B}\otimes\bigwedge V_i \arrow[r]\arrow[ur,hook]&\mathcal{E}'
	\end{tikzcd}
	\]
By obstruction theory (\Cref{obstruction theory}) the map $\mathcal{B}\otimes\bigwedge V_i\hookrightarrow\mathcal{E}$ is unique up to homotopy. Therefore it must be homotopic (rel $\mathcal{B}$) to the composition $\mathcal{B}\otimes\bigwedge V_i\xrightarrow{f} \mathcal{B}\otimes\bigwedge W_i$ followed by the map $\mathcal{B}\otimes\bigwedge W_i\to \mathcal{E}$ previously obtained. Thus reducing modulo ideal $\mathcal{B}^+$, we have a diagram
\[
\begin{tikzcd}
	\bigwedge V_i\arrow[r,hook]\ar[d,"f"] &\mathcal{F}\ar[d,"f"]\\
	\bigwedge W_i \ar[ur]\arrow[r, hook]&\mathcal{F}'
\end{tikzcd}
\]
that commutes up to homotopy. Applying the cohomology functor we obtain an actual commutative diagram
\[
\begin{tikzcd}
	V_i \ar[d]\arrow[r,equal]& H^i V_i\ar[r,"\simeq"]\ar[d,"f"] &H^i\mathcal{F}\ar[d,"f"]\\
	W_i \arrow[r,equal]& H^i W_i \ar[ur]\ar[r]&H^i\mathcal{F}'
\end{tikzcd}
\]
This forces $ V_i\cong W_i$, i.e. $V_i\xrightarrow{f} V_i'$ is injective.

On the other hand, let $j$ be the first positive degree for which $(\mathcal{F}')^j\neq 0$, we can show that $V_j\xrightarrow{f} V_j'$ is surjective by considering the lifting problem
\[
\begin{tikzcd}
	\mathcal{B}\arrow[r, hook]\arrow[d,hook] & \mathcal{E}\ar[d,"f"]\\
	\mathcal{B}\otimes\bigwedge V_j' \arrow[r, hook]\ar[ur,dashed]&\mathcal{E}'
\end{tikzcd}
\]
Since $\mathcal{F},\mathcal{F}'$ are assumed to be generalized solvable, by \Cref{cocycle-exists} $V_i, V_j'$ are non-zero, which in turn implies that $i=j$ and $V_i=V_i'$. Now iteratively replace $\mathcal{B}$ by $\mathcal{B}\otimes \bigwedge V_i$ to conclude that $\mathcal{F}\simeq \mathcal{F}'$.
\end{proof}
\begin{remark}
	It suffices to require $\mathcal{E}$, $\mathcal{E}'$ to be quasi-isomorphic for all twisting coefficients appeared in the proof, we call these \textbf{action coefficients}.
\end{remark}

\begin{corollary}\label{uniqueness-solvable} A map between two \underline{generalized solvable} algebras that induces a quasi-isomorphism for all action coefficients is an actual cbba isomorphism.
\end{corollary}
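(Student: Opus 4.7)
The plan is to deduce this corollary directly from the preceding proposition by specializing to the case where the base of the algebraic fiberation is trivial. Given a map $f\colon \mathcal{M}\to\mathcal{M}'$ between two generalized solvable cbba's, take $\mathcal{B}=\KK$ so that the algebraic fiberation $\mathcal{E}=\mathcal{B}\otimes\mathcal{F}$ degenerates to $\mathcal{M}=\KK\otimes\mathcal{M}$, i.e.\ the fiber $\mathcal{F}=\mathcal{M}$ coincides with the total space (and likewise $\mathcal{F}'=\mathcal{M}'$). Then $f$ is trivially a map over $\mathcal{B}=\KK$, and the proposition's conclusion that $f$ is a cbba isomorphism on fibers becomes exactly the desired statement that $f\colon\mathcal{M}\to\mathcal{M}'$ is a cbba isomorphism.

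It remains to check that the hypotheses of the proposition are supplied by those of the corollary. The generalized solvability hypothesis on the fibers is literally the assumption on $\mathcal{M}$ and $\mathcal{M}'$. For the quasi-isomorphism hypothesis, the proposition (as strengthened by the preceding remark) only requires that $f$ be a quasi-isomorphism for the \emph{action coefficients}, i.e.\ the twisting coefficients that actually appear during the inductive proof. In the trivial-base case these are, tautologically, the local systems that arise as the proof successively enlarges $\mathcal{B}=\KK$ by Hirsch extensions through the subspaces $V_i\subset\mathcal{F}$ constructed from the generalized solvable structure of $\mathcal{M}$. This is precisely what the corollary posits.

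There is no genuine obstacle here; all the content has been packaged into the previous proposition. The only thing a careful reader might want to verify is that the $V_i$'s and $V_i'$'s encountered in the iteration, together with the intermediate linear Hirsch extensions built from them, produce an identifiable collection of action coefficients, so that the hypothesis of the corollary is a meaningful specialization rather than an empty one. This verification is immediate from an inspection of the proof of the proposition, where the inductive replacement $\mathcal{B}\leadsto\mathcal{B}\otimes\bigwedge V_i$ accumulates all and only the local systems needed to assemble $\mathcal{M}$ through its solvable filtration.
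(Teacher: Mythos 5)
Your proposal is correct and is essentially the argument the paper intends: the corollary is the $\mathcal{B}=\KK$ specialization of the preceding proposition, with the remark about action coefficients supplying the (a priori weaker) quasi-isomorphism hypothesis. Taking a trivial base makes the fiber equal to the total space, so ``cbba isomorphism on fibers'' becomes the desired global statement; nothing more is needed.
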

\begin{corollary}\label{uniqueness-nilpotent} A map between two \underline{generalized nilpotent} algebras that induces a quasi-isomorphism (ordinary coefficients) is an actual cbba isomorphism.
\end{corollary}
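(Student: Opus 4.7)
The plan is to derive this corollary from Corollary~\ref{uniqueness-solvable} by showing that, under the generalized nilpotent hypothesis, an ordinary quasi-isomorphism automatically upgrades to a quasi-isomorphism with all action coefficients. Apply the framework of the preceding proposition with $\mathcal{B}=\KK$, so that $\mathcal{E}=\mathcal{F}=\mathcal{A}$ and $\mathcal{E}'=\mathcal{F}'=\mathcal{A}'$, both generalized nilpotent, hence generalized solvable.

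First I would trace through the action coefficients that arise in that proof. Initially $\mathcal{B}=\KK$, so every local system on $\mathcal{B}$ has trivial twisting, and quasi-isomorphism against such a coefficient $V$ is just an ordinary bicomplex quasi-isomorphism $f\otimes V$. After the first inductive step the base is enlarged to $\mathcal{B}\otimes\bigwedge V_i$, and the next $V_{i'}$ becomes a local system on this enlargement, whose twisting is read off from the $(\mathcal{B}\otimes\bigwedge V_i)^+\otimes V_{i'}$-components of the $\mathcal{A}$-differentials on $V_{i'}$. Invoking the generalized nilpotent hypothesis --- namely a filtration $F_\bullet\mathcal{A}$ whose successive linear Hirsch extensions are structured by \emph{untwisted} homotopy classes --- I would argue that one can arrange the cohomologically defined $V_i$'s to correspond, up to contractible summands, to successive pieces of such a filtration. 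Since untwisted extensions have vanishing $\Theta,\bar\Theta$ on new generators, each successive action coefficient is then a trivial direct sum of copies of the structure sheaf on the current base.

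Next I would invoke the fact that tensoring preserves bicomplex quasi-isomorphisms: a square tensored with any bicomplex decomposes as a direct sum of squares, so $\cone(f\otimes V)\cong\cone(f)\otimes V$ is contractible whenever $\cone(f)$ is. Combined with the previous step, $f$ is a quasi-isomorphism against every action coefficient, so the hypothesis of Corollary~\ref{uniqueness-solvable} is met and $f$ is an actual cbba isomorphism.

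The main obstacle will be the alignment step: checking that the abstractly cohomological $V_i$'s constructed in the preceding proposition really inherit trivial twisting under nilpotence. I expect this to follow from a dimension count, using uniqueness of the bicomplex minimal model together with the observation that an untwisted Hirsch extension produces no $\mathcal{B}^+\otimes V$ contribution in the differentials of its new generators. This is precisely the point at which the nilpotent --- rather than merely solvable --- hypothesis is essential; once it is established, the remaining tensor-product argument is routine.
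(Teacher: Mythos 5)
The high-level plan (reduce to Corollary~\ref{uniqueness-solvable} by arguing that the action coefficients become ordinary under nilpotence) is the natural route, and your tensor-product step is fine: squares tensor to squares, so an ordinary quasi-isomorphism $f$ has $\cone(f)\otimes V$ contractible for any bounded $V$. But the alignment step, which you correctly flag as the crux, does not work as you envision, and the proposed dimension count will not rescue it.

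The problem is that the $V_i$'s in the preceding Proposition are forced to be the cohomologically defined pieces $(\ker\del\delbar)^i + (\ker\del\cap\ker\delbar\cap(\im\del+\im\delbar))^{i+1}$ of the current fiber, built up degree by degree. A nilpotent filtration $F_\bullet\mathcal{A}$ may add generators of the same total degree at different stages, so a single $V_i$ can mix generators from several successive $F_k/F_{k-1}$. When that happens a decomposable term $\del x=\alpha\cdot y$ with $\alpha\in\mathcal{B}_k^+$ and $y$ a generator from an earlier $F$-stage lying inside $V_i$ contributes a nonzero twist $\Theta$ on $V_i$ over the current base, even though each layer $F_k/F_{k-1}$ is untwisted. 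Concretely: take $\mathcal{A}=\bigwedge(a,b,c)$ with $|a|=(1,0)$, $|b|=|c|=(1,1)$, $\delbar=0$ on generators, $\del a=\del b=0$, $\del c=ab$. This is generalized nilpotent via $F_1=\bigwedge(a)\subset F_2=\bigwedge(a,b)\subset F_3=\mathcal{A}$, yet the Proposition's construction forces $\mathcal{B}_1=\bigwedge(a)$ and $V_2=\langle b,c\rangle$, which carries $\Theta(c)=a\otimes b\neq 0$. So the claim that each action coefficient is ``a trivial direct sum of copies of the structure sheaf'' is simply false, and since the $V_i$'s genuinely straddle several $F$-layers (not just up to contractible summands), uniqueness of the additive minimal model will not align them with the $F_k/F_{k-1}$'s.

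What the nilpotent hypothesis actually gives is that each action-coefficient twist is \emph{nilpotent} as a $\mathcal{B}$-matrix: the $F$-filtration restricted to $V_i$ is strictly decreased by $\Theta,\bar\Theta$, precisely because an untwisted extension pushes decomposable parts of $\del x$ into the subalgebra of earlier generators. The missing ingredient is therefore a devissage lemma: for a local system with nilpotent $\Theta,\bar\Theta$, the finite filtration of $V$ induces a finite filtration on $Hom(V,\cone(f))$ whose associated graded is the untwisted complex, so if $f$ is an ordinary quasi-isomorphism the graded pieces are contractible and hence so is $Hom(V,\cone(f))$. Replacing the alignment step with this lemma makes the argument go through. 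As written, there is a genuine gap at exactly the point you flagged as the main obstacle.
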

\begin{corollary}
	A map between two autonomous minimal algebras which induces an isomorphism up to degree $2$ and a quasi-isomorphism for action coefficients in $\mathcal{M}_2$ on $H^{\ge 3}\pi$, is a cbba isomorphism.
\end{corollary}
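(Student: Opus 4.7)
The plan is to apply the algebraic fiberation proposition (and its action-coefficient refinement) to the pair $(\mathcal{M},\mathcal{M}_2)$. First I would verify this is indeed an algebraic fiberation: the autonomous hypothesis ensures $\mathcal{M}_2$ is a sub-cbba of $\mathcal{M}$, and since $\mathcal{M}$ is free as an algebra on a homogeneous basis of $\pi$ that splits into the generators of $\mathcal{M}_2$ together with the remaining ones, $\mathcal{M}$ is free as an $\mathcal{M}_2$-module with fiber $\mathcal{F}=\mathcal{M}/\mathrm{ideal}\,\mathcal{M}_2^+$. The autonomous filtration $\mathcal{M}_2\subset \mathcal{M}_3\subset\cdots$ descends to a filtration of $\mathcal{F}$ by linear Hirsch extensions with successive new generators $H^k\pi$ for $k\ge 3$, exhibiting $\mathcal{F}$ as generalized solvable; the same construction applies to $(\mathcal{M}',\mathcal{M}'_2)$ with fiber $\mathcal{F}'$.

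Next, using the given base isomorphism $f|_{\mathcal{M}_2}$ to identify $\mathcal{M}_2=\mathcal{M}'_2$, I would view $f$ as a map of algebraic fiberations over a common base. The algebraic fiberation proposition, together with the remark that permits restriction to \emph{action} coefficients, then reduces the problem to checking that $f$ is a quasi-isomorphism for every coefficient system that appears in the inductive identification of $V_i\subset H^i\pi$ carried out in that proposition's proof. These are precisely the local systems $\Theta^i,\bar\Theta^i: V_i\to \mathcal{M}_{i-1}\otimes V_i$ inherited from the ambient differential of $\mathcal{M}$.

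The crux is a degree count. Since $\Theta^i,\bar\Theta^i$ have total degree $+1$ and $V_i\subset H^i\pi$ is concentrated in degrees $i$ and $i+1$ with $i\ge 3$, the $\mathcal{M}_{i-1}$-component of $\Theta^i v$ or $\bar\Theta^i v$ must lie in degrees $1$ or $2$. But $\mathcal{M}^{\le 2}\subset \mathcal{M}_2$, because every generator of $\mathcal{M}$ of degree at most $2$ is included in $\tau_{\le 2}\pi$. Hence every action coefficient arising in the inductive argument is an $\mathcal{M}_2$-valued local system acting on a subcomplex of $H^{\ge 3}\pi$, exactly the regime covered by the hypothesis, so the fiberwise version of \Cref{uniqueness-solvable} yields $\mathcal{F}\cong \mathcal{F}'$ and hence $\mathcal{M}\cong \mathcal{M}'$. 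The main obstacle I anticipate is the careful bookkeeping needed to confirm that every local system encountered at every inductive step genuinely falls under ``action coefficients in $\mathcal{M}_2$ on $H^{\ge 3}\pi$''; once the degree argument above is made precise, the remainder is a clean application of the machinery already developed.
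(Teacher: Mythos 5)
Your proof is correct and follows the route the paper intends (the paper states this corollary without proof, as a direct application of the algebraic fiberation proposition and the action-coefficient remark). Treating $(\mathcal{M},\mathcal{M}_2)$ as an algebraic fiberation, using the given iso on $\mathcal{M}_2$ to identify bases, and noting that the structure theorem exhibits the fiber as generalized solvable with successive generators $H^k\pi$ for $k\ge 3$, is exactly the setup; the degree count showing that all twisting data land in $\mathcal{M}^{\le 2}\subset\mathcal{M}_2$ (and remain there even after the iterative base enlargements, since the new generators are all in degree $\ge 3$) is the key observation that makes the hypothesis ``action coefficients in $\mathcal{M}_2$ on $H^{\ge 3}\pi$'' sufficient. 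One small point worth tightening in a final write-up: the coefficient bicomplexes $V_i$ appearing in the fiberation proposition's proof are a priori subspaces of $\mathcal{F}$, not of $\pi$; you should say a word about why, at each inductive step, $V_i$ lies in (the image of) $H^i\pi$, which holds because at stage $i$ there are no decomposables of degree $i$ or $i+1$ in the current fiber.
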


%Next we analyze the "equal fiber" case. Let $\mathcal{E},\mathcal{E}'$ have the same fiber $\mathcal{F}$ but over different base $\mathcal{B},\mathcal{B}'$ respectively, and let $f:\mathcal{E}\to \mathcal{E}'$ be a map of fibrations. Let $\Sigma$ be a collection of coefficients closed which is under tensor products and contains the appropriate actions coefficients for both fibrations.
%\begin{proposition}[equal fiber, base to total space] If $\mathcal{B}\xrightarrow{f}\mathcal{B}'$ is a quasi-isomorphism for all coefficients in $\Sigma$, and $\del,\delbar=0$ in the common fiber, then $\mathcal{E}\xrightarrow{f}\mathcal{E}'$ is a quasi-isomorphism for all coefficients in $\Sigma$.
%\end{proposition}
%\begin{proof}
%	
%\end{proof}
%\begin{proposition}[equal fiber, total space to base]
%	Suppose $\mathcal{F}$ is solvable. Then 
%\end{proposition}

%\begin{corollary}[factorization of free cbba]
%Let $\mathcal{A}$ be a generalized solvable algebra. Then $\mathcal{A}$ can be factored as $\mathcal{A}\cong\mathcal{M}\otimes \mathcal{C}$ in which $\mathcal{M}$ is minimal and $\mathcal{C}$ contractible. This factorization is unique up to isomorphism.
%\end{corollary}
\subsection{Minimal model}
Let $\mathcal{A}$ be an Aeppli connected ($H_A^0=\KK$) and Aeppli simply-connected ($H_A^1=0$) cbba, and we now try to construct a minimal model for it. We would like to inductively construct $\varphi_k:\mathcal{M}_{k}\to \mathcal{A}$ so that $\varphi_k$ is $(k+1)$-connected, that is $H_A^{\le k}\varphi_k$ is surjective and $H_{BC}^{\le k+2}\varphi_k$ is injective. 

To start with, set $\mathcal{M}_0=\mathcal{M}_1=\KK$ and $\varphi_0=\varphi_1$ the inclusion of ground field which is clearly $2$-connected.

Now for $k\ge 2$, suppose we have constructed $\varphi_{k-1}:\mathcal{M}_{k-1}\to \mathcal{A}$, we need to construct $\mathcal{M}_k$ and $\varphi_k$. Consider $\cone(\varphi_{k-1})$, which is $(k-1)$-connected by assumption, and set $V=H^k\cone(\varphi_{k-1})$ and \underline{choose} a quasi-isomorphism $V\xrightarrow{\simeq}\tau_{\le k}\cone(\varphi_{k-1})$.

Define a Hirsch extension $\mathcal{M}_{k}:=\mathcal{M}_{k-1}\otimes\bigwedge V$ structured by the composition
\[
\Phi: V[-1]\xrightarrow{\simeq}\tau_{\le k}\cone(\varphi_{k-1})[-1]\to\cone(\varphi_{k-1})[-1]\to \mathcal{M}_{k-1}.
\]
Then $\varphi_{k-1}$ extends to a map $\varphi_k:\mathcal{M}_k\to \mathcal{A}$ because the obstruction to extension vanishes by design; indeed (recall \Cref{example:tautological extension} ) the composition
\[
\cone(\varphi_{k-1})[-1]\to \mathcal{M}_{k-1}\xrightarrow{\varphi_{k-1}}\mathcal{A}
\]
is null-homotopic. Let us analyze the connectivity of $\varphi_k$.

%Note that $V=\tau_{\le k}\cone(\varphi_{k-1})$ might not be minimal, in that case we replace $V$ by its additive minimal model $H^k V$ (choose a quasi-isomorphism $H^k V\to V$) and the construction goes through.

\begin{lemma}\label{inductive step A}
Assume that $\mathcal{M}_{k-1}^{+}$ is concentrated in degrees $\ge 2$, i.e. $\mathcal{M}_{k-1}^1=0$. Then $\varphi_{k}$ is $k$-connected and $H_{BC}^k\left(\cone(\varphi_k)\right)=0$.
\end{lemma}
\begin{proof}
	Let $i:\mathcal{M}_{k-1}\hookrightarrow\mathcal{M}_k$ be the inclusion. Then recall from \Cref{example:tautological extension} we have an induced commutative diagram (induced by the canonical null-homotopies):
	\[
	\begin{tikzcd}
		V\ar[r,"\alpha"]\ar[dr,"\beta"'] & \cone(i)\ar[d,"\gamma"]\ar[r,"\simeq","\delta"'] & \coker(i)\\
		 & \cone(\varphi_{k-1}) & 
	\end{tikzcd}
	\]
	where $\beta$ is the composition $V\xrightarrow{\simeq}\tau_{\le k}\cone(\varphi_{k-1})\hookrightarrow\cone(\varphi_{k-1})$, $\gamma$ is induced by $\varphi_k$ and $\delta$ is the quasi-isomorphism in \Cref{special-triangle}. That $\beta=\gamma\circ\alpha$ yields a distinguished triangle
	\[
	\cone(\alpha)\to \cone(\beta)\to\cone(\gamma).
	\]

	\begin{enumerate}[(1)]
		\item Since $\delta$ is a quasi-isomorphism, $\cone(\alpha)$ is quasi-isomorphic to $\cone(j)\simeq \coker(j)$ where $j=\delta\circ\alpha$ is the canonical inclusion
		\[
		j: V\hookrightarrow V+\mathcal{M}_{k-1}^+\otimes V+\mathcal{M}_{k-1}\otimes\bigwedge^2 V+\dots
		\]
		\item From \Cref{special-triangle}, $\cone(\beta)$ is quasi-isomorphic to $\tau_{\ge k+1}\cone(\varphi_{k-1})$.
		\item Since $i\circ\varphi_k=\varphi_{k-1}$, we have a distinguished triangle
	\[
	\cone(i)\xrightarrow{\gamma}\cone(\varphi_{k-1})\to\cone(\varphi_k).
	\]
	So $\cone(\gamma)$ is quasi-isomorphic to $\cone(\varphi_k)$.
	\end{enumerate}
Applying the long exact sequence \cite[Corollary 11.2]{stelzig2025pluripotential} to $\cone(\beta)\to\cone(\gamma)\to\cone(\alpha)[1]$ and using the quasi-isomorphisms above, we get exact sequences
\begin{enumerate}[(1)]
	\item $H_{BC}^i\left(\tau_{\ge k+1}\cone(\varphi_{k-1})\right)\to H^i_{BC}\cone(\varphi_k)\to H^i_{BC}\coker(j)[1]$
	\item $H_{A}^i\left(\tau_{\ge k+1}\cone(\varphi_{k-1})\right)\to H^i_{A}\cone(\varphi_k)\to H^{i}_{A}\coker(j)[1]$
\end{enumerate}

Since $\tau_{\ge k+1}\cone(\varphi_{k-1})$ is $k$-connected, its Aeppli and Bott-Chern cohomologies vanish up to degree $k$. It remains to analyze the cohomology of $\coker(j)[1]$.
\begin{enumerate}[(1)]
	\item (Bott-Chern cohomology) From \cite[Theorem 1.1]{stelzig2025pluripotential}
	\[
	H^{p,q}_{BC}\coker j[1]=\left[\bullet[p,q],\coker j [1]\right]=\left[\revLpic[p+1,q+1],\coker j\right]
	\]
	where $\bullet[p,q]$ is the bicomplex $\bullet$ located in bidegree $(p,q)$, the second equality follows from the hom-tensor adjunction and that $Hom(\Lpic,\KK)=\revLpic$, and the notation $\revLpic[p+1,q+1]$ stands for the bicomplex $\revLpic$ in which the conner is in degree $(p+1,q+1)$.
	
	Now observe that $\coker j=\mathcal{M}_{k-1}^+\otimes V+\mathcal{M}_{k-1}\otimes\bigwedge^2 V+\mathcal{M}_{k-1}\otimes\bigwedge^3 V+\cdots$ is concentrated in degrees $\ge k+2$. But $\revLpic[p+1,q+1]$ is generated in (total) degree $p+q+1$. Therefore $H^{p,q}_{BC}\coker j[1]=0$ as long as $p+q<k+1$, i.e. $H^i_{BC}\coker j[1]=0$ for $i\le k$. It follows that $H^i_{BC}\cone(\varphi_k)=0$ for $i\le k$.
	\item (Aeppli cohomology) Similarly (recall \Cref{Aeppli of shift}) from \cite[Theorem 1.1]{stelzig2025pluripotential}
	\[
	H_A^{p,q}\coker j[1]=H_{BC}^{p+1,q+1}\coker j.
	\]
	So for dimension reasons $H_A^i\coker j[1]=0$ for $i<k$ and thus $H_A^i\cone(\varphi_{k})=0$ for $i<k$. This implies that $\varphi_k$ is $k$-connected, thus completing the proof.
\end{enumerate}

%More precisely, for (i) we need $H^i_A\coker(j)[1]=0$ for $i<k$ and $H^{k}_{BC}\coker(j)[1]=0$; for (ii) we need $H^k_A\coker(j)[1]=0$.

%We apply a "spectral sequence" argument to prove $H_{BC}^{k+2}=0$.
%Therefore we obtain a distinguished triangle
%	 \[
%	 \coker(j)\to \tau_{\ge k+1}\cone(\varphi_{k-1})\to\cone(\varphi_k).
%	 \]
%	 
%Now let us analyze $\coker(j)=\mathcal{M}_{k-1}^+\otimes V+\mathcal{M}_{k-1}\otimes\bigwedge^2 V+\dots$.
\end{proof}

\begin{remark}\label{inductive remark A}
	We point out that condition $\mathcal{M}_{k-1}^1=0$ can be relaxed to that $\mathcal{M}_{k-1}^+$ is 1-connected. Indeed, $\mathcal{M}_{k-1}^+\otimes V$ is a sub-bicomplex of $\coker j$ whose relative quotient is concentrated in degrees $\ge k+2$. Moreover the induced differentials from $\mathcal{M}_k$ on $\mathcal{M}_{k-1}^+\otimes V$ are exactly the tensor product differentials.
	
	Since $\mathcal{M}_{k-1}^+$ is 1-connected and $V$ is $(k-1)$-connected, we see that $\mathcal{M}_{k-1}^+\otimes V$ is $(k+1)$-connected. This proves that $H^{i}_{BC}(\mathcal{M}_{k-1}^+\otimes V)=0$ for $i<k+2$ and consequently from the short exact sequence
	\[
	0\to \mathcal{M}_{k-1}^+\otimes V\to \coker j \to \mathcal{M}_{k-1}\otimes\bigwedge^{\ge 2} V\to 0
	\]
	we conclude $H^i_{BC}\coker j=0$ for $i<k+2$.
	
	Similarly the above short exact sequence induces an exact sequence by applying the functor $\left[\revLpic[p+1,q+1],-\right]$. Since $\mathcal{M}_{k-1}^+\otimes V$ is quasi-isomorphic to a bicomplex concentrated in degrees $\ge k+2$ and $\mathcal{M}_{k-1}\otimes\bigwedge^{\ge 2} V$ is concentrated in degrees $\ge k+2$, we conclude $\left[\revLpic[p+1,q+1],\coker j\right]=0$ for $p+q<k+1$ as needed.
 
\end{remark}

\begin{lemma}\label{inductive step B}
	Under the same assumptions as the previous lemma, if moreover $$H_{BC}^k\cone(\varphi_{k-1})=0,$$ then $\varphi_k$ is $(k+1)$-connected.
\end{lemma}
\begin{proof}
	Following the proof of the previous lemma, it suffices to prove $H^{k+2}_{BC}\coker j=0$. Now the differentials on
	\[
	\coker j=\mathcal{M}_{k-1}^+\otimes V+\mathcal{M}_{k-1}\otimes\bigwedge^2 V+\mathcal{M}_{k-1}\otimes\bigwedge^3 V+\cdots
	\]
	are inherited from $\mathcal{M}_{k}=\mathcal{M}_{k-1}\otimes \bigwedge V$ whose differentials preserve $\mathcal{M}_{k-1}^{\ge p}\otimes\bigwedge^{\le q} V$.
	
	So $K=\mathcal{M}_{k-1}^+\otimes V+\mathcal{M}_{k-1}\otimes \bigwedge^2 V$ is a sub-bicomplex of $\coker j$ and we have a short exact sequence
	\[
	0\to K\to \coker j\to \mathcal{M}_{k-1}\otimes \bigwedge^{\ge 3} V\to 0
	\]
	which then yields a short exact sequence
	\[
	H^{k+2}_{BC} K\to H^{k+2}_{BC}\coker j\to H^{k+2}_{BC}(\mathcal{M}_{k-1}\otimes \bigwedge^{\ge 3} V).
	\]
	Notice that $H^{k+2}_{BC}(\mathcal{M}_{k-1}\otimes \bigwedge^{\ge 3} V)=0$ for dimension reasons, therefore it remains to show $H^{k+2}_{BC} K=0$. Similarly we can further reduce to proving that $H^{k+2}_{BC}(\mathcal{M}_{k-1}^i\otimes V)=0$ for $i\ge 2$ and $H^{k+2}_{BC}(\mathcal{M}_{k-1}^i\otimes\bigwedge^2 V)=0$ for $i\ge 0$.  Most of them vanish for dimension reasons, we only need to show that $H^{k+2}_{BC}(\mathcal{M}_{k-1}^2\otimes V)=0$ and $H^{k+2}_{BC}(\bigwedge^2 V)=0$.
	\begin{enumerate}[(1)]
		\item The induced differentials vanish on the $\mathcal{M}_{k-1}$-factor in $\mathcal{M}_{k-1}^2\otimes V$, and thus 
		\[
		H^{k+2}_{BC}(\mathcal{M}_{k-1}^2\otimes V)=\mathcal{M}_{k-1}^2\otimes H^{k}_{BC}V=0.
		\]
		\item For dimension reasons $H^{k+2}_{BC}(\bigwedge^2 V)=0$ for $k>2$. If $k=2$, then $H^4_{BC}(\bigwedge^2 V)\subset H^4_{BC}(V\otimes V)=0$ by K\"unneth (\cite[Theorem 1.35]{stelzig2025pluripotential}). This completes the proof.
	\end{enumerate}
\end{proof}
\begin{remark}\label{inductive remark B}
	Similar to the previous remark, the condition $\mathcal{M}_{k-1}^1=0$ can be relaxed to that $\mathcal{M}_{k-1}^+$ is 1-connected. Indeed in this case $H_{BC}^{k+2}(\mathcal{M}_{k-1}^+\otimes V)=0$ by K\"unneth \cite[Theorem 1.35]{stelzig2025pluripotential}.
\end{remark}

\begin{theorem}[nilpotent model]\label{nilpotent-model}
	Let $\mathcal{A}$ be an Aeppli connected ($H^0_A=\KK$) and Aeppli simply-connected ($H^1_A=0$) cbba.
	\begin{enumerate}[(i)]
		\item (Existence) There exists a minimal nilpotent cbba $\mathcal{M}$ with a cbba quasi-isomorphism $\varphi: \mathcal{M}\to \mathcal{A}$.
		\item (Uniqueness) If $\mathcal{M}\xrightarrow{\varphi}\mathcal{A}$ and $\mathcal{M}'\xrightarrow{\varphi'}\mathcal{A}$ are two such, then there is a cbba isomorphism $\sigma:\mathcal{M}'\to \mathcal{M}$ and a homotopy between $\varphi\sigma$ and $\varphi'$; the isomorphism $\sigma$ is determined by these conditions up to homotopy.
	\end{enumerate}
	We call $\mathcal{M}$ the \textbf{nilpotent model} of $\mathcal{A}$.
\end{theorem}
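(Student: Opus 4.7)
The plan is to prove existence via the inductive construction built up in the paragraphs immediately preceding the theorem, and uniqueness via obstruction theory (Theorem \ref{obstruction theory}) combined with the algebraic-fibration rigidity of Corollary \ref{uniqueness-nilpotent}.

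For existence, I will start from $\mathcal{M}_0 = \KK$ with $\varphi_0$ the canonical inclusion, which is $1$-connected by $H^0_A \mathcal{A} = \KK$. The critical bookkeeping point is that Aeppli simple-connectedness forces $\cone(\varphi_0)$ to be $1$-connected: the long exact sequence of the distinguished triangle $\KK \to \mathcal{A} \to \cone(\varphi_0)$ together with $H^0_A \mathcal{A} = \KK$ and $H^1_A \mathcal{A} = 0$ kills $H_A^{\le 1}\cone(\varphi_0)$. Hence the first inductive step adds no generators, giving $\mathcal{M}_1 = \KK$ and in particular $\mathcal{M}_1^1 = 0$, which bootstraps the standing hypothesis $\mathcal{M}_{k-1}^1 = 0$ used throughout the pre-theorem connectivity analysis. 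The induction (with the internal re-minimization $\mathcal{M}_k = \bigcup_n \mathcal{M}_k^{(n)}$ at each level) then produces $(k+1)$-connected maps $\varphi_k: \mathcal{M}_k \to \mathcal{A}$, each extension being linear and structured by a bicomplex map, i.e.\ with trivial local systems $\Theta = \bar\Theta = 0$. The colimit $\mathcal{M} = \bigcup_k \mathcal{M}_k$ is therefore generalized nilpotent by construction, and $\varphi = \varinjlim_k \varphi_k$ is a quasi-isomorphism because $\cone(\varphi)$ is $k$-connected for every $k$.

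For uniqueness, given two such models $\varphi: \mathcal{M} \to \mathcal{A}$ and $\varphi': \mathcal{M}' \to \mathcal{A}$, I will apply Theorem \ref{obstruction theory}(i) to the lifting problem with base $\KK \hookrightarrow \mathcal{M}'$, vertical map $\varphi$, and bottom edge $\varphi'$. Because $\varphi$ is a quasi-isomorphism, $\cone(\varphi)$ is contractible even for twisted coefficients, so every obstruction class in $[\pi(\mathcal{M}',\KK), \cone(\varphi)]$ vanishes, producing $\sigma: \mathcal{M}' \to \mathcal{M}$ with $\varphi\sigma$ homotopic to $\varphi'$. Since $\varphi\sigma \simeq \varphi'$ induces the same map on Bott-Chern and Aeppli cohomology as $\varphi'$, and both $\varphi, \varphi'$ are quasi-isomorphisms, $\sigma$ is one as well; Corollary \ref{uniqueness-nilpotent} then upgrades $\sigma$ to an actual cbba isomorphism. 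Uniqueness of $\sigma$ up to homotopy follows immediately from the corollary of Theorem \ref{obstruction theory}: post-composition with the quasi-isomorphism $\varphi$ is a bijection $hHom(\mathcal{M}',\mathcal{M}) \to hHom(\mathcal{M}',\mathcal{A})$, so any two candidate lifts of the class of $\varphi'$ must already be homotopic.

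The principal obstacle I anticipate is the degree bookkeeping in the existence half: one must propagate $\mathcal{M}_k^1 = 0$ through both the outer induction on $k$ and the inner re-minimization $\mathcal{M}_k^{(n)}$, since a single stray degree-one generator would activate the $\mathcal{M}^1 \cdot \mathcal{M}^{k+1}$ term and break the connectivity estimate for $\cone(\varphi_k^{(n)})$. The uniqueness half is comparatively soft once obstruction theory is in place, but it still requires care that Corollary \ref{uniqueness-nilpotent} genuinely applies — namely, that the whole construction stays in the untwisted/generalized-nilpotent regime, so that ordinary quasi-isomorphism is enough to force an actual cbba isomorphism.
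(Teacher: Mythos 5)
Your proposal is correct and follows the paper's own route: existence via the inductive construction preceding the theorem (starting from $\mathcal{M}_0=\mathcal{M}_1=\KK$), and uniqueness via \Cref{obstruction theory} together with \Cref{uniqueness-nilpotent}. One small overstatement to flag: you claim that ``$\cone(\varphi)$ is contractible even for twisted coefficients,'' which is not true in general for a cbba quasi-isomorphism with respect to ordinary coefficients. What saves the argument—and what you do correctly invoke at the end—is that $\mathcal{M}'$ is generalized \emph{nilpotent}, so every Hirsch extension in its filtration is structured by an \emph{untwisted} homotopy class; hence the obstruction classes in $[\pi(\mathcal{M}',\KK),\cone(\varphi)]$ only ever involve ordinary (untwisted) coefficients, and vanish because $\varphi$ is an ordinary quasi-isomorphism. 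With that clarification the uniqueness half closes cleanly, and the existence half is as in the paper (your bookkeeping that Aeppli simple-connectedness yields $\mathcal{M}_1=\KK$ and thereby propagates $\mathcal{M}_{k}^1=0$ is exactly the point the paper's terse statement ``take $\mathcal{M}_0=\mathcal{M}_1=\KK$'' glosses over).
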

\begin{proof}
	Take $\mathcal{M}_0=\mathcal{M}_1=\KK$ with $\varphi_1$ the inclusion of ground field. Suppose $\varphi_{k-1}:\mathcal{M}_{k-1}\to \mathcal{A}$ has been constructed. Then applying the construction yields an extension $\mathcal{M}_k^{(1)}$ and a map $\varphi_k^{(1)}:\mathcal{M}_{k}^{(1)}\to \mathcal{A}$ which is $k$-connected and $H_{BC}^k\cone(\varphi_k^{(1)})=0$. If $\varphi_k^{(1)}$ is already $(k+1)$-connected, then we take $\mathcal{M}_k=\mathcal{M}_k^{(1)}$ and $\varphi_k=\varphi_k^{(1)}$. If not, then we apply the construction one more time to obtain an extension $\mathcal{M}_k^{(2)}$ and a map $\varphi_k^{(2)}:\mathcal{M}_k^{(2)}\to \mathcal{A}$. This map must be $(k+1)$-connected by the previous lemma, so we take $\mathcal{M}_k=\mathcal{M}_k^{(2)}$ and $\varphi_k=\varphi_k^{(2)}$. It is clear that if $\mathcal{M}_{k-1}^1=0$ then $\mathcal{M}_k^1=0$ as well.

	Finally we obtain a nilpotent minimal algebra $\mathcal{M}=\bigcup_k \mathcal{M}_k$ with a quasi-isomorphism $\varphi=\lim \varphi_k:\mathcal{M}\to \mathcal{A}$.
	
	Uniqueness follows from \Cref{uniqueness-nilpotent} and obstruction theory.
\end{proof}

\begin{remark}\label{nilpotent-implies-auto}
	\begin{enumerate}[(i)]
%		\item If each inductive step terminates after finitely many steps, then $\mathcal{M}$ is nilpotent. In fact \cite[Theorem 2.34]{stelzig2025pluripotential} constructs a nilpotent model, and therefore should be isomorphic to ours by uniquness. However the author currently does not know how to show directly that our model is nilpotent.
		\item Note that $\mathcal{M}_k$ is constructed from $\mathcal{M}_{k-1}$ by adding generators in degree $k,k+1$; the degree $k+1$ generators are hit by the differentials of degree $k$ generators (mod. decomposables). Therefore $\mathcal{M}_k$ is the sub-cbba of $\mathcal{M}$ generated by $\mathcal{M}^{\le k}$ and in particular $\mathcal{M}$ is autonomous.
		\item The nilpotent series of $\mathcal{M}$ given by $\mathcal{M}_k^{(n)}$ is canonical. Indeed, denote $\mathcal{M}_k^{(0)}=\mathcal{M}_{k-1}$ and assume that an automorphism of $\mathcal{M}$ preserves $\mathcal{M}_k^{(n-1)}$, then it preserves the cokernel of $\mathcal{M}_k^{(n-1)}\hookrightarrow\mathcal{M}$ and thus preserves $H^k$ of cokernel, which consists of new generators in $\mathcal{M}_k^{(n)}$. We call this nilpotent series the \textbf{lower central series}.
		\item The construction of $\mathcal{M}$ becomes completely formal above the cohomological dimension of $\mathcal{A}$.
	\end{enumerate}
\end{remark}

\begin{remark}
	Our construction fails to produce a connected model without the Aeppli simply connectedness assumption, similar to \cite[Example 3.13]{stelzig2025pluripotential}.
\end{remark}

%\begin{example}
%	
%\end{example}

\begin{theorem}[relative model]
	Let $f: \mathcal{B}\to \mathcal{A}$ be cbba map. Suppose $\mathcal{B}$ is Aeppli connected and Aeppli simply-connected, and assume $f$ is $2$-connected.
	\begin{enumerate}[(i)]
		\item (Existence) There exists an algebraic fibration $\mathcal{E}$ over $\mathcal{B}$ with generalized nilpotent fiber, and a map $\varphi: \mathcal{E}\to \mathcal{A}$ extending $f$ so that $\varphi$ is a quasi-isomorphism.
		\item (Uniqueness) Any two such fibrations $\mathcal{E},\mathcal{E}'$ are isomorphism; the isomorphisms $\sigma$ are well-defined up to homotopy by the condition that $\varphi\sigma$ is homotopic to $\varphi'$ (rel $\mathcal{B}$).
	\end{enumerate}
\end{theorem}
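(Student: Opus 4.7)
My plan is to run the same iterative construction as in \Cref{nilpotent-model}, but now relative to the base $\mathcal{B}$ and with Hirsch extensions twisted by local systems drawn from $\Sigma$, so that the role played by ordinary Bott--Chern/Aeppli cohomology in the absolute case is taken over by $\Sigma$-twisted cohomology. Uniqueness will follow by combining the relative obstruction theory of \Cref{obstruction theory} with the rigidity of generalized solvable algebras (\Cref{uniqueness-solvable}).

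For existence I inductively build a tower $\mathcal{B} = \mathcal{E}_1 \subset \mathcal{E}_2 \subset \cdots$ together with compatible cbba maps $\varphi_k:\mathcal{E}_k \to \mathcal{A}$ extending $f$, where each $\mathcal{E}_{k-1} \hookrightarrow \mathcal{E}_k$ is a successive linear Hirsch extension twisted only by coefficients in $\Sigma$, and $\varphi_k$ is $(k+1)$-connected on $\sigma$-twisted cohomology for every $\sigma \in \Sigma$. The base case $\mathcal{E}_1 = \mathcal{B}$, $\varphi_1 = f$ is the hypothesis. At step $k$ I form the $\sigma$-twisted cones $\cone(\varphi_{k-1})_\sigma$ for $\sigma\in\Sigma$, take $V_k$ to be (an additive minimal model of) their degree-$\le k$ truncations, and form the linear Hirsch extension $\mathcal{E}_k^{(1)} = \mathcal{E}_{k-1} \otimes \bigwedge V_k$ structured by the canonical $\Sigma$-twisted homotopy class $V_k[-1] \to \mathcal{E}_{k-1}$ that emerges from the definition of these cones. \Cref{extension-property} extends $\varphi_{k-1}$ to $\varphi_k^{(1)}$, and the same connectivity bookkeeping as in the proof of \Cref{nilpotent-model}, carried out inside $\sigma$-twisted cohomology, shows $\varphi_k^{(1)}$ is $k$-connected; iterating and passing to the colimit yields $\varphi_k$ with the desired $(k+1)$-connectivity. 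Setting $\mathcal{E} := \bigcup_k \mathcal{E}_k$ and $\varphi := \varinjlim_k \varphi_k$ produces an algebraic fiberation whose fiber is generalized solvable by construction.

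For uniqueness I apply the lifting form of \Cref{obstruction theory} to the square with $\nu' = \mathrm{id}_\mathcal{B}$, $p = \varphi$ and $f = \varphi'$: the inductive obstructions sit in $[\pi(\mathcal{E}',\mathcal{B}),\cone(\varphi)]$ twisted by the action coefficients of $\mathcal{E}'$, which by construction lie in $\Sigma$. Since $\varphi$ is a $\Sigma$-twisted quasi-isomorphism, these cones are contractible, all obstructions vanish, and I obtain $\sigma:\mathcal{E}'\to \mathcal{E}$ with $\varphi\sigma$ homotopic to $\varphi'$ (rel $\mathcal{B}$), unique up to homotopy by the uniqueness clause of the same theorem. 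Then $\sigma$ is itself a $\Sigma$-twisted quasi-isomorphism (because $\varphi$ and $\varphi'$ are), and because the fibers of $\mathcal{E}, \mathcal{E}'$ are generalized solvable, \Cref{uniqueness-solvable} upgrades $\sigma$ to an actual cbba isomorphism.

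The main obstacle I anticipate is the inductive step of existence: one must choose $V_k$ and the twisted homotopy class structuring the extension so that a single linear Hirsch extension raises twisted connectivity simultaneously for every $\sigma \in \Sigma$, while ensuring the twisting coefficients of the extension stay inside $\mathcal{B}$ (so the fiber remains generalized solvable rather than something more general). The remaining steps are essentially translations of tools already developed in the paper.
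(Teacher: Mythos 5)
Your proposal matches the paper's intent exactly: the paper's entire proof reads ``The construction and proof of \Cref{nilpotent-model} goes through in the twisted and relative case,'' and you have spelled out precisely what that means---rerun the inductive tower construction over the base $\mathcal{B}$, twist the Hirsch extensions by coefficients in $\mathcal{B}$, track connectivity in $\sigma$-twisted cohomology for $\sigma\in\Sigma$, and settle uniqueness via \Cref{obstruction theory} together with the rigidity result \Cref{uniqueness-solvable} for generalized solvable fibers. The obstacle you flag at the end---choosing a single $V_k$ with $\mathcal{B}$-valued twisting that raises connectivity simultaneously for every $\sigma\in\Sigma$---is a real point of care, but it is not addressed by the paper either, so you are being appropriately honest about where the ``goes through'' is doing work rather than identifying a gap the paper has and you lack.
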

\begin{proof}
	The construction and proof of \Cref{nilpotent-model} in fact goes through in the relative case, by the virtue of of \Cref{inductive remark A} and \Cref{inductive remark B}. Note that $\mathcal{B}$ being Aeppli connected and Aeppli simply-connected implies that the inclusion of scalars $\KK\to \mathcal{B}$ is $2$-connected and thus $\mathcal{B}^+\cong\mathcal{B}/\KK$ is $1$-connected.	
	
	Take $\mathcal{E}_1=\mathcal{B}$ and $\varphi_1=f$ and we inductively construct $(k+1)$-connected maps $\varphi_k:\mathcal{E}_k\to \mathcal{A}$ as before. We leave it to the reader to check that the property $\mathcal{E}_k^+$ is 1-connected is persistent through the inductive construction by noting that each term of
	\[
	\mathcal{E}_k^+=\mathcal{E}_{k-1}^+ +V+\mathcal{E}_{k-1}^+\otimes V+\mathcal{E}_{k-1}\otimes\bigwedge^{\ge 2}V
	\]
	is 1-connected.
\end{proof}

\section{Applications}
\subsection{Automorphism}

\begin{lemma}
	An automorphism of a connected minimal algebra $\mathcal{M}$ is unipotent iff it acts unipotently on either one of the following: dual homotopy $\pi=\mathcal{M}/\mathcal{M}^+\cdot\mathcal{M}^+$, $\del\delbar$-spherical cohomology $\Sigma_{\del\delbar}=\ker\del\delbar/(\mathcal{M}^+\cdot\mathcal{M}^+\cap\ker\del\delbar)$ and $\del\delbar$-cohomology $H_{\del\delbar}=\ker\del\delbar/\im\del\delbar$.
\end{lemma}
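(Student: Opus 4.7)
The forward implication is immediate: each of $\pi$, $\Sigma_{\del\delbar}$, $H_{\del\delbar}$ is a $\sigma$-invariant sub-quotient of $\mathcal{M}$ (since $\sigma$ preserves the algebra structure and commutes with $\del,\delbar$), so unipotency descends.

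For the reverse with $\pi$, I would use the descending $\sigma$-stable filtration $F^p\mathcal{M}:=(\mathcal{M}^+)^p$. As $\mathcal{M}$ is free on $\pi$, the associated graded canonically identifies with the graded-symmetric algebra $\mathrm{Sym}^\bullet(\pi^+)$, on which $\sigma$ acts through $\mathrm{Sym}^\bullet(\sigma|_{\pi^+})$, unipotent by hypothesis. Connectedness forces $\pi^+$ into positive total degree, so in each bidegree of $\mathcal{M}$ only finitely many steps of $F^\bullet$ are non-zero, yielding a finite composition series on which $\sigma$ acts unipotently.

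For $\Sigma_{\del\delbar}$ and $H_{\del\delbar}$, I argue the contrapositive using Jordan--Chevalley. Extend scalars to $\overline{\KK}$ and write $\sigma=\sigma_s\sigma_u$; both factors are again cbba automorphisms since they are polynomials in $\sigma$, so it suffices to see that $\sigma_s\neq\mathrm{id}$ on $\pi$ forces $\sigma_s\neq\mathrm{id}$ on $\Sigma_{\del\delbar}$ (resp. $H_{\del\delbar}$). Pick a character $\lambda\neq 1$ of $\sigma_s$ on $\pi$ and let $d$ be the smallest total degree with $\pi^d_\lambda\neq 0$. Then $\sigma_s$ is the identity on $\pi^{<d}$, hence on the subalgebra $\mathcal{M}^{<d}$, and by connectedness every element of $(\mathcal{M}^+\cdot\mathcal{M}^+)^d$ is a product of factors from $\mathcal{M}^{<d}$ and so is also fixed; likewise $(\im\del\delbar)^d=\del\delbar\mathcal{M}^{d-2}$ is fixed. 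In particular $\mathcal{M}^d_\lambda=\pi^d_\lambda$ and the $\lambda$-eigenspace of $(\im\del\delbar)^d$ vanishes.

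The task reduces to producing a non-zero $v\in\pi^d_\lambda$ with $\del\delbar v=0$: such $v$ then represents a non-zero $\lambda$-eigenclass in $\Sigma_{\del\delbar}^d$ (since its class in $\pi$ is non-zero) and in $H_{\del\delbar}^d$ (since $(\im\del\delbar)^d$ has no $\lambda$-part), contradicting the unipotency hypothesis. The induced map
\[
\bar{\del\delbar}\colon \pi^d_\lambda\longrightarrow (\mathcal{M}^+\cdot\mathcal{M}^+)^{d+2}_\lambda
\]
has target spanned, by the same character analysis, by products of $\pi^d_\lambda$ with the fixed generators in degree $2$ (with an analogous description at the boundary $d=2$). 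The key step, and main obstacle, is to show this map cannot be injective: one applies the relation $(\del\delbar)^2=0$ together with the second-order Leibniz formula for $\del\delbar$ on products in the free algebra $\bigwedge\pi$ to derive a linear dependence that produces a non-trivial element of $\ker\bar{\del\delbar}$, completing the contradiction.
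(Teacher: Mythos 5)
Your forward direction is fine, and your treatment of $\pi$ via the $\sigma$-stable filtration $F^p\mathcal{M}=(\mathcal{M}^+)^p$ with $\mathrm{gr}^\bullet\cong\mathrm{Sym}^\bullet(\pi^+)$ is a correct and perfectly reasonable way to package the same observation the paper makes through exact sequence (i) (``the action on $\mathcal{M}^+\cdot\mathcal{M}^+$ is determined by lower degree generators'').

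However, for $\Sigma_{\del\delbar}$ and $H_{\del\delbar}$ your argument is incomplete, and you say so yourself: the entire weight falls on showing that the map $\bar{\del\delbar}\colon\pi^d_\lambda\to(\mathcal{M}^+\cdot\mathcal{M}^+)^{d+2}_\lambda$ fails to be injective, and you label this ``the key step, and main obstacle,'' offering only a speculative appeal to $(\del\delbar)^2=0$ and a second-order Leibniz rule. That is precisely the content of the lemma for $\Sigma_{\del\delbar}$, so the proposal leaves a genuine gap. There is also a flaw in the supporting character analysis: you claim the target $(\mathcal{M}^+\cdot\mathcal{M}^+)^{d+2}_\lambda$ is spanned by $\pi^d_\lambda$ times degree-$2$ fixed generators, but when $\mathcal{M}^1\neq 0$ (which the lemma does \emph{not} exclude) contributions of the form $\pi^{d+1}_\lambda\cdot\mathcal{M}^1$ also appear, and $\pi^{d+1}_\lambda$ is not controlled by the assumption that $\sigma_s$ is trivial in degrees $<d$.

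The paper's route is structurally different and avoids the obstacle you hit. Instead of trying to produce a $\del\delbar$-cocycle in $\pi^d_\lambda$, it uses the three short exact sequences relating $\mathcal{M}$, $\pi$, $\Sigma_{\del\delbar}$, $H_{\del\delbar}$ and observes that in each one the ``other'' term is a subquotient of $\mathcal{M}^+\cdot\mathcal{M}^+$; most crucially, in sequence (ii) the cokernel $\pi/\Sigma_{\del\delbar}$ is a quotient of $\mathcal{M}/\ker\del\delbar\cong\im\del\delbar\subset\mathcal{M}^+\cdot\mathcal{M}^+$ (using minimality), so its automorphism action is controlled multiplicatively by lower-degree generators. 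This sidesteps the injectivity question about $\del\delbar$ entirely: one never needs to produce a cocycle, only to recognize the complementary piece as decomposable. I would encourage you to redo the $\Sigma_{\del\delbar}$ and $H_{\del\delbar}$ directions along these lines rather than trying to close the Leibniz-rule computation, which does not obviously terminate.
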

\begin{proof}
	This follows from the exact sequences
	\begin{enumerate}[(i)]
		\item $0\to \mathcal{M}^+\cdot\mathcal{M}^+\to \mathcal{M}\to \pi\to 0$,
		\item $0\to \Sigma_{\del\delbar}\to \pi\to \mathcal{M}/(\ker \del\delbar+\mathcal{M}^+\cdot\mathcal{M}^+)\to 0$,
		\item $0\to \mathcal{M}^+\cdot\mathcal{M}^+/\im \del\delbar\to H_{\del\delbar}\to \Sigma_{\del\delbar}\to 0$.
	\end{enumerate}
	The action of an automorphism on $\mathcal{M}^+\cdot\mathcal{M}^+$ is determined by its action on lower degree generators. Note that the action on $\mathcal{M}/(\ker \del\delbar+\mathcal{M}^+\cdot\mathcal{M}^+)$ is determined by the action on $\mathcal{M}/\ker \del\delbar\cong \im \del\delbar\subset \mm{M}^+\cdot\mm{M}^+$.
\end{proof}

Now we observe that the action of an automorphism on $\im\del+\im\delbar$ is determined by its action on lower degree subspaces, and thus we can take $\im\del+\im\delbar$ out from the groups in the previous lemma. This proves:
\begin{proposition}\label{unipotency}
	An automorphism of a connected minimal algebra $\mathcal{M}$ is unipotent iff it acts unipotently on either one of the following:
	\begin{align*}
		\pi_A & =\frac{\mathcal{M}}{\im\del+\im\delbar+\mathcal{M}^+\cdot\mathcal{M}^+},\\
		\Sigma_A& =\frac{\ker\del\delbar}{\mathcal{M}^+\cdot\mathcal{M}^+\cap\ker\del\delbar+\im\del+\im\delbar},\\
		H_A &= \frac{\ker\del\delbar}{\im\del+\im\delbar}.
	\end{align*}
\end{proposition}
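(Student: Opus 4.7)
The plan is to deduce the proposition from the previous lemma by passing from each of $\pi$, $\Sigma_{\del\delbar}$, $H_{\del\delbar}$ to its ``$A$''-version through a degree induction. The enabling observation is that any cbba automorphism $\sigma$ of $\mathcal{M}$ commutes with $\del$ and $\delbar$, so its action on $(\im\del + \im\delbar)^n$ is completely determined by its action on $\mathcal{M}^{n-1}$ via $\sigma(\del x + \delbar y) = \del \sigma x + \delbar \sigma y$. In particular, once $\sigma$ is known to be unipotent on $\mathcal{M}^{\le n-1}$, it is automatically unipotent on $(\im\del + \im\delbar)^n$.

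First I would write down the short exact sequences realising $\pi_A$, $\Sigma_A$, $H_A$ as quotients of the previously considered groups:
\begin{align*}
0 &\to \tfrac{\im\del + \im\delbar}{(\im\del + \im\delbar)\cap(\mathcal{M}^+\cdot\mathcal{M}^+)} \to \pi \to \pi_A \to 0, \\
0 &\to \tfrac{\im\del + \im\delbar}{(\im\del + \im\delbar)\cap(\mathcal{M}^+\cdot\mathcal{M}^+)} \to \Sigma_{\del\delbar} \to \Sigma_A \to 0, \\
0 &\to \tfrac{\im\del + \im\delbar}{\im\del\delbar} \to H_{\del\delbar} \to H_A \to 0.
\end{align*}
For the second sequence, the inclusion $\im\del + \im\delbar \subseteq \ker\del\delbar$ is used to identify the kernel. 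In each case the kernel in degree $n$ is a subquotient of $(\im\del + \im\delbar)^n$, hence is controlled by $\sigma|_{\mathcal{M}^{n-1}}$ by the observation above.

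Next I would run an induction on total degree $n$, showing that unipotency on $\pi_A$ (resp.\ $\Sigma_A$, $H_A$) in all degrees $\le n$ implies unipotency on $\mathcal{M}^{\le n}$. The base case $n = 0$ is trivial since $\mathcal{M}^0 = \KK$. For the inductive step, unipotency on $\mathcal{M}^{\le n-1}$ yields unipotency on $(\mathcal{M}^+\cdot\mathcal{M}^+)^n$ (via products of lower-degree factors) and on $(\im\del + \im\delbar)^n$ (via commuting with differentials). Combined with the assumed unipotency on $\pi_A^n$ (resp.\ $\Sigma_A^n$, $H_A^n$) through the appropriate short exact sequence, and then invoking the previous lemma to pass from $\pi^n$ (resp.\ the other quotient) back to $\mathcal{M}^n$, one extracts unipotency on $\mathcal{M}^n$. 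The converse direction is automatic because $\pi_A$, $\Sigma_A$, $H_A$ are quotients of $\pi$, $\Sigma_{\del\delbar}$, $H_{\del\delbar}$ respectively, and unipotency descends to quotients.

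The proof should be purely formal bookkeeping, with no real obstacle. The mildest technical point is verifying that the three kernels above are indeed subquotients of $\im\del + \im\delbar$; this is immediate from the definitions, using the inclusions $\im\del\delbar \subseteq \im\del$ and $\im\del + \im\delbar \subseteq \ker\del\delbar$. No new ideas beyond the previous lemma are required.
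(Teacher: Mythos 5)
Your proof is correct and follows essentially the same approach as the paper: the paper's one-sentence argument is precisely your enabling observation — that an automorphism's action on $\im\del+\im\delbar$ is determined by its action on lower-degree subspaces, so $\im\del+\im\delbar$ can be quotiented out of the groups from the previous lemma. You simply make explicit the short exact sequences and degree induction that implement this.
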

\begin{remark}
	These groups are related by $H_A\twoheadrightarrow \Sigma_A\hookrightarrow \pi_A$. The group $\pi_A$ is the Aeppli cohomology of $\pi$.
\end{remark}
\begin{corollary}
	An automorphism of $\mathcal{M}$ homotopic to identity is unipotent. We call these \textbf{inner automorphisms}.
\end{corollary}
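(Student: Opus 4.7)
The plan is to reduce the statement directly to \Cref{unipotency} by checking the easiest of the three equivalent conditions, namely unipotence of the induced action on Aeppli cohomology $H_A$. Concretely, I would argue that an automorphism $\sigma:\mathcal{M}\to\mathcal{M}$ which is homotopic to the identity in the cbba sense induces the identity on $H_A$, and then invoke the proposition.

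For the first step, I would appeal to the lemma earlier in the homotopy subsection which says that cbba-homotopic maps are homotopic as bicomplex maps. This is the key input: it reduces cbba homotopy to the ordinary notion of bicomplex homotopy, under which both Aeppli and Bott--Chern cohomology are invariants. Since bicomplex-homotopic maps induce equal maps on $H_A$, the homotopy $\sigma\simeq \mathrm{id}$ forces $H_A(\sigma)=H_A(\mathrm{id})=\mathrm{id}$ on $H_A\mathcal{M}$.

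Having identity action on $H_A$, the operator $H_A(\sigma)$ is trivially unipotent (its only eigenvalue is $1$, with nilpotent part zero). Now I apply \Cref{unipotency}: an automorphism of the connected minimal algebra $\mathcal{M}$ is unipotent iff it acts unipotently on $H_A$. Hence $\sigma$ itself is unipotent, proving the corollary.

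I do not expect any real obstacle here; the content lies entirely in the two previously established facts, namely that cbba homotopy implies bicomplex homotopy, and the equivalent characterizations of unipotence in \Cref{unipotency}. The only point that requires a line of care is that the "unipotent on $H_A$" clause in \Cref{unipotency} is applied to the full graded automorphism of $H_A\mathcal{M}$ rather than to any single graded piece; this is automatic here since $\sigma$ acts as the identity on all of $H_A$ at once.
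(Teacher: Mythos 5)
Your proposal is correct and matches the paper's own argument, which simply observes that an automorphism homotopic to the identity acts trivially on Aeppli cohomology and then invokes \Cref{unipotency}. You have merely spelled out the intermediate step (cbba homotopy implies bicomplex homotopy implies equal induced maps on $H_A$) that the paper leaves implicit.
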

\begin{proof}
	Such an automorphism acts trivially on Aeppli cohomology.
\end{proof}

To summarize, we have
\begin{theorem}
	The following symmetry groups are differed only by normal unipotent subgroups.
	\begin{enumerate}[(i)]
		\item all automorphisms, denoted by $\aut\mathcal{M}$;
		\item outer automorphsims (i.e. all aut. modulo inner aut.), denoted by $h\aut\mathcal{M}$;
		\item $H_A$ automorphisms\footnote{we mean those induced from $\aut\mathcal{M}$, same below.};
		\item $\pi_A$ automorphisms;
		\item $\Sigma_A$ automorphisms.
	\end{enumerate}
	If $\pi_A$ is finite dimensional, then all of these symmetry groups are algebraic matrix groups with the same reductive part.
\end{theorem}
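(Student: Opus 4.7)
The plan is to show that each of the groups (ii)--(v) is a quotient of $\aut\mathcal{M}$ by a normal unipotent subgroup, and then to establish the algebraic-group picture under the finite-dimensionality hypothesis via a Levi decomposition.

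By functoriality, every element of $\aut\mathcal{M}$ induces automorphisms of $\pi_A$, $\Sigma_A$ and $H_A$, producing canonical homomorphisms whose images are by definition the groups in (iii)--(v); together with the quotient $\aut\mathcal{M} \twoheadrightarrow h\aut\mathcal{M}$ onto (ii), this yields four natural surjections out of $\aut\mathcal{M}$. Each of their kernels is unipotent: for (ii), the kernel is the inner automorphisms, which are unipotent by the Corollary preceding the theorem; for each of (iii), (iv), (v), an element of the kernel acts trivially (and so unipotently) on the relevant space, whence by \Cref{unipotency} it is unipotent on all of $\mathcal{M}$. Hence (ii)--(v) are all quotients of $\aut\mathcal{M}$ by normal unipotent subgroups, which is the content of the first assertion. (In particular, any two of the five groups are linked via $\aut\mathcal{M}$ by extensions with unipotent kernel, using that the product of two normal unipotent subgroups is normal and unipotent in characteristic zero.)

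For the algebraic-group statement, under the assumption that $\pi_A$ is finite-dimensional, $\aut(\pi_A)$ is a Zariski-closed subgroup of $GL(\pi_A)$ (compatibility with the bigrading and the differentials $\del,\delbar$ are polynomial conditions), hence an algebraic matrix group, and the induced image of $\aut\mathcal{M}$ is a Zariski-closed subgroup and so algebraic. The exact sequences appearing in the lemma before \Cref{unipotency} force $\Sigma_A$ and $H_A$ to be finite-dimensional as well, so the induced groups in (iii) and (v) are algebraic for the same reason. For $\aut\mathcal{M}$ itself (and $h\aut\mathcal{M}$) I would exploit the autonomous/lower-central filtration of \Cref{nilpotent-implies-auto} to present it as a pro-algebraic group extending the finite-dimensional algebraic group in (iv) by a tower of unipotent factors. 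Applying the Levi decomposition in characteristic zero, the reductive Levi quotient is invariant under passage to a quotient by any normal unipotent subgroup, so all five groups share the same reductive part.

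The chief obstacle will be making the pro-algebraic structure on $\aut\mathcal{M}$ precise when $\mathcal{M}$ is infinite-dimensional: one must verify that the restrictions to the autonomous pieces $\mathcal{M}_k$ assemble into a cofiltered tower of algebraic group extensions with unipotent kernels, so that $\aut\mathcal{M}$ is a genuine pro-algebraic group whose finite-dimensional reductive quotient is controlled by the image in $\aut(\pi_A)$. Once this structural point is in place, the theorem follows formally from \Cref{unipotency} together with the Levi theorem.
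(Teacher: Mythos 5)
Your handling of the first assertion is correct and matches the paper: each of the groups (ii)--(v) is a quotient of $\aut\mathcal{M}$ by a normal unipotent kernel, by the corollary on inner automorphisms and by \Cref{unipotency}. However, your treatment of the algebraicity claim has a genuine gap, and the ``chief obstacle'' you flag is in fact illusory.

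The paper's argument hinges on a single observation you overlook: since $\pi$ is a minimal bicomplex it decomposes as a direct sum of zig-zags (no squares), every zig-zag contained in the first quadrant contributes nontrivially to Aeppli cohomology, and an infinite zig-zag bounded below contributes infinitely. Hence $\pi_A$ being finite-dimensional forces $\pi$ itself to be finite-dimensional, so $\mathcal{M}$ is finitely generated. An automorphism of $\mathcal{M}$ is then determined by its action on the finite-dimensional generating space $\pi$, subject to polynomial conditions (compatibility with $\del,\delbar$ and the bigrading), so $\aut\mathcal{M}$ is literally a closed subgroup of a finite-dimensional $\GL$ --- an algebraic matrix group --- and the groups in (ii)--(v), being its quotients by closed normal unipotent subgroups, are algebraic matrix groups with the same Levi reductive part. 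There is no pro-algebraic tower to manage. Your proposal of using the lower central series to build a pro-algebraic structure on $\aut\mathcal{M}$ both overcomplicates the proof and, crucially, would not even deliver the stated conclusion, which is that these groups are \emph{algebraic matrix groups}, not merely pro-algebraic. Finally, a smaller slip: you assert that the exact sequences force $H_A$ to be finite-dimensional, but the surjection $H_A\twoheadrightarrow\Sigma_A$ gives no such control (for instance $\mathcal{M}=\KK[x]$ with $x$ in degree $(1,1)$ and $\del=\delbar=0$ has infinite-dimensional $H_A$). This does not actually matter for the theorem, since the relevant group is the \emph{image} of $\aut\mathcal{M}$ in $\aut H_A$, which is a quotient of an algebraic group and hence algebraic regardless of the dimension of $H_A$, but the claim as you stated it is false.
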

\begin{proof}
	Note that $\pi_A$ being finite dimensional implies that $\pi$ is finite dimensional and therefore $\mathcal{M}$ is finitely generated. Then an automorphism is determined by its action on generators subject to algebraic conditions imposed by commuting with differentials.
\end{proof}
\begin{remark}
The Lie algebra of the inner automorphism group should be that of inner derivations, i.e. derivations of the form $[\del,[\delbar,i]]$ in which $i$ is a bidegree $(-1,-1)$ derivation, see \cite{infinitesimal}. The Lie algebra of the outer automorphism group should be identified with Andr\'e-Quillen cohomology of $\mathcal{M}$, see \cite{block-lazarev}. We do not pursue these here.
\end{remark}

\begin{corollary}\label{finite-stage-aut}
	Let $\mathcal{M}$ be a connected nilpotent cbba, finitely generated in each degree. Assume $H_A$ is finite dimensional, then $h\aut\mathcal{M}$ is an algebraic matrix group.
\end{corollary}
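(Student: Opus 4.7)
The strategy is to apply the preceding theorem: it suffices to show that $\pi_A$ is finite-dimensional, for then all the listed symmetry groups, in particular $h\aut\mathcal{M}$, are algebraic matrix groups. Since $\mathcal{M}$ is finitely generated in each degree, each $\pi_A^n$ is already finite-dimensional, so the real task is to bound the degrees in which $\pi_A$ is supported. The surjection $H_A \twoheadrightarrow \Sigma_A$ and inclusion $\Sigma_A \hookrightarrow \pi_A$ force $\Sigma_A$ to be finite-dimensional, so what remains is to bound the complementary part $\pi_A/\Sigma_A$, i.e.\ the indecomposables lying outside $\ker \del\delbar$.

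To bound the range of $\pi$ in degree, I would exploit the nilpotent filtration $\mathcal{M} = \bigcup_k F_k\mathcal{M}$ with each step $F_k = F_{k-1} \otimes \bigwedge V_k$ a linear Hirsch extension by $V_k \subset \pi$ carrying an untwisted k-invariant $[\Phi_k]$. Iterating \Cref{cocycle-exists} along the filtration --- applied to the mapping cone of $F_{k-1} \hookrightarrow F_k$ and tracked through the associated long exact sequence in Aeppli cohomology --- I would argue that each non-zero $V_k$ appearing in a previously unseen degree contributes at least one new class to $H_A(F_k)$, which then persists into $H_A\mathcal{M}$ in the colimit. Finite-dimensionality of $H_A$ would then force only finitely many non-zero $V_k$, so $\pi$ is finite-dimensional in total, whence $\pi_A$ as well, and the preceding theorem concludes.

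The main obstacle is the cancellation issue: a new $V_k$ could a priori merely kill an older $H_A$-class rather than introduce a new one, and one must rule out infinite chains of such cancellations. Minimality forces the k-invariant $[\Phi_k]$ to land in $F_{k-1}^+ \cdot F_{k-1}^+$ up to homotopy, so a bottom-degree generator of $V_k$ cannot be hit by $\del$ or $\delbar$ from below and thus should survive in Aeppli cohomology of $F_k$; making this layer-by-layer bookkeeping rigorous --- and in particular verifying that the newly produced classes remain independent after passage to the colimit $\mathcal{M}$ --- is the technical heart of the proof.
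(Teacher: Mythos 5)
Your strategy is to reduce to the preceding theorem by showing that $\pi_A$ is finite-dimensional, but this is attempting to prove something that is \emph{false} under the stated hypotheses, and the paper's proof deliberately avoids it. The analog in rational homotopy theory is decisive: the minimal Sullivan model of $S^2\vee S^2$ is simply connected, finitely generated in each degree, has finite-dimensional cohomology, yet has infinitely many generators (its homotopy Lie algebra is a free Lie algebra on two generators). The same phenomenon occurs for cbba's. Finite-dimensionality of $H_A$ simply does not bound the degrees in which $\pi_A$ can be supported; the whole reason the corollary has content beyond the preceding theorem is that it trades the (stronger, structural) hypothesis ``$\pi_A$ finite-dimensional'' for the (weaker, cohomological) hypothesis ``$H_A$ finite-dimensional.''

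The specific place your argument breaks is exactly the cancellation issue you flagged at the end. You argue that a bottom-degree generator of $V_k$ ``should survive in Aeppli cohomology of $F_k$,'' and from this you want to conclude that it contributes a permanent class in $H_A\mathcal{M}$. But survival in $H_A(F_k)$ does not imply survival in $H_A(F_{k+1})$, let alone in $H_A\mathcal{M}$: the next stage $V_{k+1}$ is introduced precisely to kill excess cohomology created in earlier stages, and this killing can continue indefinitely without ever generating new permanent classes. Your appeal to minimality only constrains where $\Phi_k$ lands, not what the differentials of \emph{later} generators do to the class of $V_k$. The paper sidesteps this entirely: rather than bounding $\pi_A(\mathcal{M})$, it uses the obstruction theory of \Cref{obstruction theory} to show that $hHom(\mathcal{M},\mathcal{M})\cong hHom(\mathcal{M}_k,\mathcal{M}_k)$ once $k$ is large enough, because $\pi(\mathcal{M},\mathcal{M}_k)$ is highly connected while $\mathcal{M}$ is quasi-isomorphic to a bounded truncation $\tau_{\le n}\mathcal{M}$ (this is where finite $H_A$ enters), so the obstruction groups vanish for degree reasons. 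Then $h\aut\mathcal{M}\cong h\aut\mathcal{M}_k$, and the latter is algebraic because $\mathcal{M}_k$ \emph{is} finitely generated, so the preceding theorem applies at that finite stage. You should keep your observation that each $\pi_A^n$ is finite-dimensional, but the route forward is to stabilize $h\aut$ at a finite stage via obstruction theory, not to bound the support of $\pi_A$.
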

\begin{proof}
Let $\mathcal{M}_k\subset\mathcal{M}$ a nilpotent series.
	We apply obstruction theory to argue that the natural restriction map induces an isomorphism $$hHom(\mathcal{M},\mathcal{M})\cong hHom(\mathcal{M}_k,\mathcal{M}_k)$$ for $k$ sufficiently large. Then it follows that $h\aut\mathcal{M}$ is isomorphic to $h\aut\mathcal{M}_k$; the latter is an algebraic matrix group by the previous theorem.
	
	Now given any self-map $f$ of $\mathcal{M}_k$, the obstructions for extending $\varphi_k f:\mathcal{M}_k\to \mathcal{M}$ to a self-map of $\mathcal{M}$ lie in $[\pi(\mathcal{M},\mathcal{M}_k)[-1],\mathcal{M}]$. Notice that $\pi(\mathcal{M},\mathcal{M}_k)$ is highly-connected, while $\mathcal{M}$ is quasi-isomorphic to $\tau_{\le n}\mathcal{M}$ for some fixed $n$ since $H_A\mathcal{M}$ is finite dimensional. So for $k$ large enough (so that $\pi(\mathcal{M},\mathcal{M}_k)$ is at least $(n+1)$-connected), the obstructions vanish for dimension reasons. Similarly, the obstructions to uniqueness lie in $[\pi(\mathcal{M},\mathcal{M}_k),\mathcal{M}]$ which vanishes for $k$ large enough.
\end{proof}
%\begin{remark}
%	This appears to be true even without the simply-connectedness assumption, but we do not have a proof.
%\end{remark}

\begin{example}\label{example:aut-groups} Let $p(x_1,\dots,x_n)$ be a homogeneous polynomial and we consider
	\[
	\mathcal{M}=\bigwedge(x_1,\dots,x_n; y,z)
	\]
	in which $|x_i|=(1,1)$ with $\del x_i=\delbar x_i=0$ and $\del y=\delbar z=p(x_1,\dots,x_n)$. Let $\sigma$ be an automorphism of $\mathcal{M}$, then $\sigma$ is determined by its action on generators, and can be presented by a triple $A\in \GL_n,\lambda\in \GG_m,\mu\in \GG_m$ as $\sigma(x,y,z)=(Ax, \lambda y,\mu z)$. Since $\sigma$ commutes with $\del,\delbar$, we deduce that $\lambda=\mu$ and
	\[
	p(Ax)=\lambda p(x).
	\]
	Hence $\aut\mathcal{M}$ is the extension
	\[
	1\to \aut(p)\to \aut\mathcal{M}\to \GG_m\to 1.
	\]
	For instance, if $p=x_1^2+\dots+x_n^2$ then $\aut\mathcal{M}$ is the orthogonal similitude group.
	
	The (cdga) Sullivan model of $\mathcal{M}$ is $\mathcal{M}_S=\bigwedge(x_1,\dots,x_n; w)$ with $dx_i=0$ and $dw=p(x)$ whose (cdga) automorphism group is canonically isomorphic to $\aut\mathcal{M}$.
\end{example}
\begin{remark}
	Ignoring products with multiplicative groups, all connected reductive groups appear in this example. See \cite[Theorem 6.1]{infinitesimal}.
\end{remark}

 One would love to compare, in general, the (cdga) automorphism of the Sullivan model of a minimal cbba to its (cbba) automorphism. Let $\mathcal{M}$ be a minimal cbba and $\mathcal{M}_S\to \mathcal{M}$ its Sullivan model. Then we have natural map
\[
\text{outer automorphism of $\mathcal{M}$}\to \text{outer automorphism of $\mathcal{M}_S$}
\]
by obstruction theory. The corresponding map between Lie algebras should be
\[
\text{cbba Andr\'e-Quillen cohomology}\to \text{cdga Andr\'e-Quillen cohomology}
\]
induced by the canonical forgetful functor.
\begin{problem}
	Describe the map on Andr\'e-Quillen cohomology induced by forgetful functor. Note this however does not determine the map between automorphism groups since they are not necessarily connected.
\end{problem}
\begin{problem}
	Analyze the general structure of kernel and cokernel.
\end{problem}

%\begin{question}[rationality]
%	When does a minimal algebra $\mathcal{M}$ admits a rational structure?
%\end{question}

\begin{example}[$\CC\PP^n$]\label{example:CPn}
Let $\mathcal{M}=\bigwedge (x, y,\del y,\delbar y)$ with $|x|=(1,1)$, $\del x=\delbar x=0$ and $\del\delbar y=x^{n+1}$. Then $\aut\mathcal{M}$ is $\GG_m\times \GG_a$ and the (cdga) automorphism of its Sullivan model is $\GG_m$.
\end{example}
%guess: kernel is finite.algebraic torus.unipotent; cokernel controlled by obstruction theory.
\subsection{Formality}

\begin{definition}[cf.\cite{milivojevic2024bigraded}]
	A minimal cbba $\mathcal{M}$ is (strongly) \textbf{formal} if it is quasi-isomorphic, as cbba, to its Bott-Chern cohomology.
\end{definition}

\begin{definition}
	By the \textbf{grading automorphism} of a bigraded vector space $V^{\bullet,\bullet}$, we mean the action of $\GG_m\times \GG_m$ in which $(\alpha,\beta)$ acts on $V^{p,q}$ through multiplication by $\alpha^p\beta^q$. By a \textbf{lifting} of grading automorphism on $\mathcal{M}$, we mean a homomorphism
\[
\GG_m\times \GG_m\to \aut\mathcal{M}
\]
which passes to the grading automorphisms on \textit{both} Bott-Chern and Aeppli cohomology of $\mathcal{M}$.
\end{definition}

\begin{theorem}
	Let $\mathcal{M}$ be a finitely generated nilpotent minimal cbba with $H_A^1=0$. Then $\mathcal{M}$ is formal iff it carries a lifting of grading automorphism.
\end{theorem}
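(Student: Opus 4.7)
The plan is to prove the two implications separately.

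For the forward direction ($\Rightarrow$), formality provides a cbba quasi-isomorphism between $\mathcal{M}$ and $H_{BC}(\mathcal{M})$ (viewed as a cbba with zero differentials), and the latter carries a tautological homomorphism $\GG_m\times\GG_m\to \aut(H_{BC}(\mathcal{M}))$ scaling $H^{p,q}_{BC}$ by $\alpha^p\beta^q$. By the uniqueness part of \Cref{nilpotent-model}, each such cbba automorphism of $H_{BC}(\mathcal{M})$ lifts to a homotopy self-equivalence of $\mathcal{M}$, producing a homomorphism $\GG_m\times\GG_m\to h\aut\mathcal{M}$. To promote this to a strict homomorphism $\rho:\GG_m\times\GG_m\to \aut\mathcal{M}$, I would use that $\aut\mathcal{M}\twoheadrightarrow h\aut\mathcal{M}$ has unipotent kernel (the previous corollary identifying inner automorphisms as unipotent) and that $\GG_m\times\GG_m$ is reductive, so a Mostow-type Levi splitting applies.

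For the backward direction ($\Leftarrow$), the lifting $\rho$ makes $\mathcal{M}$ into a $\GG_m\times\GG_m$-representation by cbba automorphisms. Decomposing each $\mathcal{M}^{p,q}$ into weight eigenspaces $\mathcal{M}^{p,q}(r,s)$ gives a splitting of bicomplexes
\[
\mathcal{M}=\bigoplus_{(r,s)}\mathcal{M}(r,s),
\]
compatible with multiplication in the sense $\mathcal{M}(r,s)\cdot\mathcal{M}(r',s')\subseteq \mathcal{M}(r+r',s+s')$. The hypothesis that $\rho$ induces the grading automorphism on \emph{both} $H_{BC}$ and $H_A$ forces $H^{p,q}_{BC}\mathcal{M}(r,s) = H^{p,q}_A\mathcal{M}(r,s) = 0$ whenever $(p,q)\neq(r,s)$. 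Since the dot is the only indecomposable zigzag with $H_{BC}$ and $H_A$ supported in the same bidegree, the structure theorem for bicomplexes forces each $\mathcal{M}(r,s)$ to split into a dot in bidegree $(r,s)$, canonically identified with $H^{r,s}_{BC}\mathcal{M}\cong H^{r,s}_A\mathcal{M}$, plus a contractible complement. In particular, $\mathcal{M}$ is equivariantly quasi-isomorphic to $H_{BC}(\mathcal{M})$ as a bicomplex.

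To upgrade this to a cbba quasi-isomorphism, let $\mathcal{N}$ be the generalized nilpotent model of $H_{BC}(\mathcal{M})$ given by \Cref{nilpotent-model}; by the forward direction, $\mathcal{N}$ carries a compatible grading lifting whose generators in bidegree $(p,q)$ lie in pure weight $(p,q)$. I would construct a $\GG_m\times\GG_m$-equivariant cbba map $\varphi:\mathcal{N}\to\mathcal{M}$ by induction on the lower central series of $\mathcal{N}$: extending $\varphi$ over a new generator of weight $(p,q)$ amounts, per \Cref{obstruction theory}, to killing a twisted Bott-Chern class of $\mathcal{M}$ of weight $(p,q)$; the purity decomposition confines this obstruction to the pure-weight summand, where it coincides with the analogous obstruction for $\mathcal{N}$ itself and hence vanishes by construction of $\mathcal{N}$. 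The resulting $\varphi$ is a cbba quasi-isomorphism, hence by \Cref{uniqueness-nilpotent} an actual isomorphism, giving $\mathcal{M}\simeq H_{BC}(\mathcal{M})$. The main technical obstacle is this equivariant induction: although each obstruction is \emph{morally} killed by purity, one must carefully refine \Cref{cylinder} and \Cref{obstruction theory} to the equivariant setting, verifying that the auxiliary elements $\epsilon,\zeta,\eta$ can be chosen within the correct weight subspaces so that the inductive data remains compatible with $\rho$ throughout.
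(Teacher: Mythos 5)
Your forward direction is essentially the paper's argument (lift the grading automorphism via obstruction theory, then split off the unipotent kernel by Levi decomposition), phrased through $h\aut\mathcal{M}$ rather than $\rho:\aut\mathcal{M}\to\aut H$, which is a cosmetic difference.

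Your backward direction diverges from the paper and has a genuine gap. Your preliminary observation is correct and nice: decomposing $\mathcal{M}=\bigoplus\mathcal{M}(r,s)$ into weight pieces and noting that the grading hypothesis on both $H_{BC}$ and $H_A$ forces each $\mathcal{M}(r,s)$ to be a direct sum of dots in bidegree $(r,s)$ and squares (this immediately gives the $\del\delbar$-lemma, which the paper obtains instead by observing that $\del,\delbar:H_A\to H_{BC}$ raise degree but preserve weight). The gap is in the cbba upgrade. First, the claim that the nilpotent model $\mathcal{N}$ of $H_{BC}\mathcal{M}$ has ``generators in bidegree $(p,q)$ lying in pure weight $(p,q)$'' is false: already for $\mathcal{M}$ modeling $\CP^n$, the generator $y$ with $\del\delbar y = x^{n+1}$ sits in degree $(n,n)$ but weight $(n+1,n+1)$. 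Consequently, the obstruction to extending $\varphi$ over $y$ is the Bott-Chern class of $\varphi(\del\delbar y)\in\mathcal{M}$, which has degree $|y|+2$ and weight equal to the weight of $y$. When the weight of $y$ equals $|y|+2$ (as above), this obstruction lives in a bidegree where weight $=$ degree on cohomology, so the purity argument gives no vanishing at all. The statement that the obstruction ``coincides with the analogous obstruction for $\mathcal{N}$ itself'' is not justified: it would require controlling $H_{BC}(\varphi)$ along the induction so that it factors through $H_{BC}(\mathcal{N}_{k-1}\to\mathcal{N})$, which is precisely the non-trivial content. The paper sidesteps all of this by proving the auxiliary lemma ``weight $\ge$ degree'' (by induction on the lower central series, using minimality and nilpotency), and then constructing an explicit cbba quotient $\mathcal{M}\to S/S\cap(\del I,\delbar I)$ onto $H_{BC}\mathcal{M}$, where $S$ is the weight $=$ degree part and $I$ the weight $>$ degree ideal; no equivariant refinement of the cylinder construction or obstruction theory is needed. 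If you want to keep your route, you would at minimum need to establish weight $\ge$ degree first, and then carry out the equivariant refinement you flag as the ``main technical obstacle'' in full detail -- but at that point the paper's direct quotient construction is both shorter and cleaner.
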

\begin{proof}
	If we have a quasi-isomorphism $\mathcal{M}\to H_{BC}(\mathcal{M})$, then this map is necessarily surjective and all relevant cohomology groups coincide. Then obstruction theory implies that all automorphisms lift, i.e. 
\[
\rho: \aut\mathcal{M}\to \aut H
\]
is surjective. Since $\GG_m\times\GG_m$ is reductive and $\ker \rho$ is unipotent, the grading automorphism $\GG_m\times\GG_m\to \aut H$ lifts to $\aut\mathcal{M}$ (recall Levi decomposition).

Conversely assume we have a lifting $\sigma: \GG_m\times\GG_m\to \aut\mathcal{M}$ of grading automorphism. Then standard linear representation theory of $\GG_m\times\GG_m$ shows that $\mathcal{M}$ splits into a direct sum of bigraded weight subspaces. Since $\sigma$ is further a cbba automorphism, all the weight subspaces are sub-bicomplexes and weight is additive with respect to multiplication.

We claim that $\mathcal{M}$ satisfies $\del\delbar$-lemma (so all relevant cohomology groups coincide) and that weight $\ge$ degree\footnote{$(i,j)\ge (i',j')$ if $i\ge i'$ and $j\ge j'$.}. Granted these for the moment, write
\[
\mathcal{M}=\{\text{weight$=$degree}\}\oplus \{\text{weight$>$degree}\}=S\oplus I.
\]
We can deduce that
\begin{itemize}
	\item $S$ is a subalgebra and $I$ is a multiplicative ideal.
	\item $\del,\delbar$ vanish on $S$, since both $\del,\delbar$ increases degree but preserves weight. In particular $S$ is a sub-cbba and one can treat $\mathcal{M}$ as a dg-algebra over $S$.
	\item $\delbar\mathcal{M}=\delbar I$, $\del \mathcal{M}=\del I$ and $\del\delbar \mathcal{M}=\del\delbar I$. In particular $\del\delbar I\subset \del I\cap\delbar I$.
\end{itemize}
Now consider the cbba quotient map (by differential ideal)
\[\mathcal{M}\to \mathcal{M}/(I,\del I,\delbar I,\del\delbar I)=\mathcal{M}/(I,\del I,\delbar I)=S/S\cap(\del I,\delbar I).\]
We show this is a quasi-isomorphism and thus $\mathcal{M}$ is quasi-isomorphic to its Bott-Chern cohomology. To see this, the composition
\[
S\to \mathcal{M}\to S/S\cap(\del I,\delbar I)
\]
is surjective and forces the induced map $H(\mathcal{M})\to S/S\cap(\del I,\delbar I)$ to be surjective. On the other hand every class in $H(\mathcal{M})$ has weight equal to degree and thus arises from $S$, so the map must be an isomorphism.

It remains to prove that $\mathcal{M}$ satisfies $\del\delbar$-lemma and that weight $\ge$ degree. To see $\mathcal{M}$ satisfies $\del\delbar$-lemma, it suffices to show that the induced maps
\[
\del,\delbar: H_A\to H_{BC}
\]
are trivial. This follows immediately from the observation that $\del,\delbar$ increases degree but preserves weight. But on both $H_A$ and $H_{BC}$, degree $=$ weight.

	Let $\mathcal{M}_k^{(n)}$ be the lower central series of $\mathcal{M}$. Assume that we have proved weight $\ge$ degree on $\mathcal{M}_{k}^{(n-1)}$. Then it suffices to prove weight $\ge$ degree on new generators. Let us analyze degree $k+1$ generators first. Choose such a generator and represent it by an indecomposable element $x$ in $\mathcal{M}_k^{(n)}$. Then $\del x$ and $\delbar x$ are both decomposables from $\mathcal{M}_{k}^{(n-1)}$. If either of them is non-zero, then $x$ is forced to have a weight (the same as its image) that is $\ge$ degree of $\del x$ (or $\delbar x$) which is bigger than degree of $x$. If $\del x=\delbar x=0$, then either $x$ defines a non-zero class in Bott-Chern cohomology and thus has weight equal to its degree; or it is zero in Bott-Chern, which means that $x=\del\delbar y$ and in particular decomposable by minimality of $\mathcal{M}$, but this contradicts our assumption that $x$ is indecomposable. This proves that degree $k+1$ new generators all have weight $\ge$ degree. Finally for a degree $k$ new generator $x$, either at least one of $\del x,\delbar x$ is non-zero, then it inherits a weight that is bigger than its degree; or both $\del x,\delbar x=0$, then we apply the same argument as before.
\end{proof}
\begin{remark}
	The simply-connectedness condition is technical, the argument works as long as $\mathcal{M}$ carries a canonical nilpotent series fixed by all automorphisms; same below.
\end{remark}
\begin{corollary}\label{formality-fini-coho}
	The same is true if $\mathcal{M}$ is a nilpotent algebra finitely generated in each degree with $H_A^1=0$ and $H_A\mathcal{M}$ is finite dimensional.
\end{corollary}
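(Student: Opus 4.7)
The plan is to reduce to the proof of the preceding theorem, with \Cref{finite-stage-aut} substituting for the global finite-generation hypothesis.

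For the forward direction (formal implies a lifting of the grading automorphism), obstruction theory continues to yield surjectivity of the restriction map $\rho: \aut\mathcal{M} \to \aut H_{BC}(\mathcal{M})$, with unipotent kernel by \Cref{unipotency}. Since inner automorphisms act trivially on cohomology, $\rho$ descends to a surjection $h\aut\mathcal{M} \twoheadrightarrow \aut H_{BC}(\mathcal{M})$ of algebraic matrix groups, the domain being algebraic by \Cref{finite-stage-aut}, again with unipotent kernel. Levi decomposition then lifts the grading homomorphism $\GG_m \times \GG_m \to \aut H_{BC}(\mathcal{M})$ to $h\aut\mathcal{M}$. To promote this to a genuine lift into $\aut\mathcal{M}$, I would use that the kernel $\aut\mathcal{M} \to h\aut\mathcal{M}$, consisting of inner automorphisms, is pro-unipotent — applying an iterated Levi lift across the tower of finite-stage approximations $\aut\mathcal{M}_k^{(n)}$ (along the canonical lower central series from \Cref{nilpotent-implies-auto}(iii)), each of which is algebraic by the previous theorem.

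For the converse direction, the argument of the previous theorem carries over essentially word-for-word. Its critical inputs --- the canonical lower central series (\Cref{nilpotent-implies-auto}(iii)), the inductive proof of weight $\ge$ degree on new generators of each fresh stage, the verification of the $\del\delbar$-lemma via the observation that $\del,\delbar$ preserve weight but raise degree, and the final weight decomposition $\mathcal{M} = S \oplus I$ with $\del,\delbar$ vanishing on $S$ --- all take place degreewise. Finite generation in each degree is enough to make each inductive step well-defined, and finite-dimensionality of $H_A$ is not needed at all for this half. The quasi-isomorphism $\mathcal{M} \to S/(S \cap (\del I, \delbar I))$ is still verified by matching every cohomology class, weight-by-weight, to its unique representative in $S$.

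The main obstacle I anticipate is in the forward direction: specifically, promoting the Levi lift from $h\aut\mathcal{M}$ back to $\aut\mathcal{M}$. Unlike the globally finitely generated case, $\aut\mathcal{M}$ is only pro-algebraic, expressed as an inverse limit along the tower of $\aut\mathcal{M}_k^{(n)}$; the kernel of each successive restriction is unipotent because its elements act trivially on newly added generators modulo previous ones, parametrized by twisted cohomology groups living in specific bidegrees. Iterating Levi decomposition up this tower produces the desired lift, but some care is required to check that the compatibility of the lifts across stages can be arranged --- this is where the finite-generation-in-each-degree hypothesis is genuinely used.
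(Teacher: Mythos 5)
Your proof is correct and follows essentially the same route as the paper: factor $\rho$ through $h\aut\mathcal{M}$, which is algebraic by \Cref{finite-stage-aut}, lift the grading homomorphism there via Levi decomposition, and then promote it to $\aut\mathcal{M}$ by iterating Levi lifts up a tower of finitely-generated stages; and observe that the converse direction carries over verbatim. The paper's only substantive addition is how to arrange the compatibility you flag: it notes $h\aut\mathcal{M}=h\aut\mathcal{M}_k$ for $k$ large, so each $\aut\mathcal{M}_k$ in the (degree-filtration) tower has the same reductive part, which makes the successive Levi lifts automatically compatible and yields a genuine limit in $\aut\mathcal{M}=\varprojlim_n\aut\mathcal{M}_{k+n}$.
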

\begin{proof}
The previous argument goes through, except that now $\aut\mathcal{M}$ is not necessarily an algebraic matrix group. We work around by observing that $\rho$ factors through $h\aut\mathcal{M}$ which is algebraic by \Cref{finite-stage-aut}, therefore we can first lift the grading automorphism to $h\aut\mathcal{M}$. Then notice that $h\aut\mathcal{M}=h\aut\mathcal{M}_k$ for $k$ large. Fix a large $k$, we can lift the grading automorphism to $\aut\mathcal{M}_k$, then to $\aut\mathcal{M}_{k+1}$ since they have the same reductive part. Continue doing so, we get a compatible sequence of liftings to $\aut\mathcal{M}_{k+n}$, which in the end gives a lifting to $\aut\mathcal{M}=\varprojlim_n \aut\mathcal{M}_{k+n}$.
\end{proof}
\begin{remark}
	The above argument in fact proves, under the assumptions of \Cref{formality-fini-coho}, that $\mathcal{M}$ is formal iff $\mathcal{M}_k$ is formal for $k$ large.
\end{remark}

\begin{example}
The minimal algebras in \Cref{example:aut-groups} and \Cref{example:CPn} are formal. The grading automorphism in \Cref{example:CPn} is induced by $\GG_m\times\GG_m\to \GG_m$, $(\alpha,\beta)\mapsto \alpha\cdot\beta$.
\end{example}

\begin{corollary}[cf. \cite{angella2015bott}\cite{milivojevic2024bigraded}]
	All ABC-Massey products vanish on formal algebras.
\end{corollary}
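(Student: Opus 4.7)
The plan is to deduce the corollary directly from the preceding characterization of formality via grading automorphism liftings. Since $\mathcal{M}$ is formal, the theorem furnishes a homomorphism $\sigma:\GG_m\times\GG_m\to \aut\mathcal{M}$ whose induced action on $H_{BC}$ (and on $H_A$) is the grading automorphism, i.e.\ $\sigma(\alpha,\beta)$ acts on a class of bidegree $(p,q)$ by multiplication by $\alpha^p\beta^q$. The strategy is then a weight/bidegree counting argument: because ABC-Massey products carry a nontrivial bidegree shift relative to the naive product of inputs, $\sigma$-equivariance forces every such product to vanish.

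Concretely I would proceed as follows. First, record that every ABC-Massey product $\mu$ is an operation intrinsic to the cbba structure, defined by recursively choosing $\del$-, $\delbar$-, or $\del\delbar$-primitives of lower products; consequently $\mu$ is natural with respect to arbitrary cbba automorphisms, i.e.\ $\tau^*\mu\langle [a_1],\dots,[a_n]\rangle=\mu\langle \tau^*[a_1],\dots,\tau^*[a_n]\rangle$ modulo the usual indeterminacy, for any $\tau\in\aut\mathcal{M}$. Next, bookkeep the bidegree in which $\mu\langle[a_1],\dots,[a_n]\rangle$ lives: if $[a_i]$ has bidegree $(p_i,q_i)$, then the output bidegree is $(\sum p_i,\sum q_i)$ shifted by the characteristic offset of the particular ABC-operation (coming from each $\del$-, $\delbar$-, or $\del\delbar$-primitive step in the recursive definition), and this offset is nonzero for any genuine higher product.

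Finally, apply naturality to $\tau=\sigma(\alpha,\beta)$. The right-hand side of the naturality identity scales by $\alpha^{\sum p_i}\beta^{\sum q_i}$, while the left-hand side, living in a bidegree $(p',q')$ that differs from $(\sum p_i,\sum q_i)$ by the nonzero characteristic offset, scales by $\alpha^{p'}\beta^{q'}$. Equating the two scalings on a supposedly nonzero class yields $\alpha^{\sum p_i-p'}\beta^{\sum q_i-q'}=1$ for all $(\alpha,\beta)\in\GG_m\times\GG_m$, which is impossible unless the class is zero modulo indeterminacy. Hence every ABC-Massey product on $\mathcal{M}$ vanishes.

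The main obstacle is the second step: verifying cleanly that every ABC-Massey product, as defined in \cite{milivojevic2024bigraded}, is (i) natural with respect to cbba automorphisms and (ii) lands in a bidegree that is strictly shifted away from the sum of input bidegrees. Both facts are essentially built into the inductive definition in terms of $\del$, $\delbar$, and $\del\delbar$ primitives, but they should be spelled out once — after which the weight-counting argument above is immediate. An alternative, equally short route would be to invoke the fact that ABC-Massey products are invariants of cbba quasi-isomorphism and to compute them directly on the target $H_{BC}(\mathcal{M})$ (with zero differentials), where they vanish modulo indeterminacy for trivial reasons; this avoids the grading automorphism but relies on a transfer-of-structure argument that has not been developed in the paper.
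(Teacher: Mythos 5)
Your argument is correct and is essentially the paper's own: the paper's proof consists of the single sentence ``They have wrong weights,'' which is exactly the bidegree/weight-counting you spell out. Your ``equivariance under $\sigma(\alpha,\beta)$'' formulation and the paper's ``weight decomposition of $\mathcal{M}$ with weight $\ge$ degree, equality on cohomology'' are two phrasings of the same observation, and the two details you flag as needing to be checked (naturality of ABC-Massey products under cbba automorphisms, and the nonzero bidegree offset of a genuine higher product) are precisely the content the paper compresses into ``wrong weights.''
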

\begin{proof}
	They have wrong weights.
\end{proof}
\begin{corollary}[formality descends]
Let $\KK\subset\LL$ be a characteristic zero field extension, and $\mathcal{M}_\KK$ be a finitely generated nilpotent cbba with $H_A^1=0$ over $\KK$. If $\mathcal{M}_\LL=\mathcal{M}_\KK\otimes_\KK\LL$ is formal over $\LL$, then $\mathcal{M}_\KK$ is formal over $\KK$.
\end{corollary}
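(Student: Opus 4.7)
The plan is to use the previous theorem's characterization of formality: $\mathcal{M}$ is formal iff the grading automorphism on cohomology lifts to a homomorphism $\GG_m\times\GG_m\to\aut\mathcal{M}$. Formality of $\mathcal{M}_\LL$ supplies such a lift $\sigma_\LL$ over $\LL$; the task is to descend it to $\KK$ using algebraic group theory. Since $\mathcal{M}_\KK$ is finitely generated, $\aut\mathcal{M}_\KK$ is an affine algebraic group over $\KK$, and the formation of $\aut\mathcal{M}$ as well as of $\aut H_{BC}$ and $\aut H_A$ commutes with base change from $\KK$ to $\LL$.

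Consider the $\KK$-morphism of algebraic groups $\rho_\KK:\aut\mathcal{M}_\KK\to\aut H_\KK$, where $H$ stands for the relevant cohomology (e.g.\ $H_A$). Its kernel is unipotent by \Cref{unipotency}. Scheme-theoretic image commutes with flat base change, so $\mathrm{Im}(\rho_\LL)=\mathrm{Im}(\rho_\KK)\otimes_\KK\LL$ as closed subgroups of $\aut H_\LL$. The existence of $\sigma_\LL$ exhibits the grading torus $(\GG_m\times\GG_m)_\LL$ as a subgroup of $\mathrm{Im}(\rho_\LL)$, and since containment of closed subschemes is detected after faithfully flat base change, we conclude $(\GG_m\times\GG_m)_\KK\subseteq \mathrm{Im}(\rho_\KK)$.

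Form the preimage $H':=\rho_\KK^{-1}(\GG_m\times\GG_m)$, a closed subgroup of $\aut\mathcal{M}_\KK$ fitting into a short exact sequence
\[
1\to U\to H'\to \GG_m\times\GG_m\to 1
\]
with $U=\ker\rho_\KK$ connected unipotent. In characteristic zero this extension splits via the Levi decomposition: a Levi subgroup $L\subseteq H'$ maps isomorphically onto the torus quotient, yielding a section $\sigma_\KK:\GG_m\times\GG_m\to\aut\mathcal{M}_\KK$. By construction $\rho_\KK\circ\sigma_\KK$ is the grading homomorphism on cohomology, so $\sigma_\KK$ is the required lift and $\mathcal{M}_\KK$ is formal. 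The main point needing care is the descent of containment in the image under base change, which reduces to standard flatness facts; the characteristic-zero hypothesis enters decisively only in the final splitting step, where it ensures that extensions of tori by unipotent groups admit sections.
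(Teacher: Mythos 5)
Your proof is correct and takes a genuinely different route from the paper's. The paper, having fixed the $\KK$-morphism $\rho:\aut\mathcal{M}_\KK\to\aut H_\KK$, views $\rho^{-1}(x)$ as a torsor under the unipotent kernel $\ker\rho$ and invokes vanishing of $H^1(\mathrm{Gal}(\LL/\KK),\ker\rho)$ for unipotent groups in characteristic zero to produce a $\KK$-point from the given $\LL$-point. You instead make two moves: first, using that scheme-theoretic image commutes with flat base change, you descend the containment of the grading torus in $\im\rho$ from $\LL$ to $\KK$; second, you apply the Levi--Mostow decomposition (valid over any field of characteristic zero) to split the extension $1\to\ker\rho\to\rho^{-1}(T)\to T\to1$ and obtain the lifting homomorphism $\GG_m\times\GG_m\to\aut\mathcal{M}_\KK$ directly. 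A virtue of your route is that it manifestly yields a \emph{homomorphism} of algebraic groups rather than a single rational point (the paper's torsor argument lifts a $\KK$-point $x$; promoting this to a full homomorphism is left implicit), and it neatly reuses the same Levi-complement mechanism that the paper already employs in the "if" direction of the preceding theorem, so the proof of the corollary becomes a direct continuation of that argument with the one new ingredient being descent of image along $\KK\subset\LL$. One small point of hygiene: the grading homomorphism $\GG_m\times\GG_m\to\aut H$ need not be injective, so where you write "$(\GG_m\times\GG_m)_\KK\subseteq\im\rho_\KK$" and "$\rho_\KK^{-1}(\GG_m\times\GG_m)$" you should really be working with the image torus $T\subseteq\aut H_\KK$ of the grading homomorphism; the Levi section then gives $T\xrightarrow{\sim}L\subseteq\aut\mathcal{M}_\KK$, and precomposing with $\GG_m\times\GG_m\twoheadrightarrow T$ yields the desired lift. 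This is cosmetic and does not affect the argument.
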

\begin{proof}
	The automorphism group $G$ of $\mathcal{M}_\KK$ and $G'$ of $H\mathcal{M}_\KK$ are $\KK$-algebraic groups, equipped with a natural $\KK$-morphism $\rho: G\to G'$. Consider now $x$ a $\KK$-point of $G'$, then $\rho^{-1}(x)$ is a principal homogeneous space (i.e. torsor) of the group $\rho^{-1}(\mathtt{e})$ (all defined over $\KK$); note that $\rho^{-1}(x)$ has an $\LL$-point since $\mathcal{M}_\LL$ is formal over $\LL$. The obstruction to finding a $\KK$-point of $\rho^{-1}(x)$ now lies in
	\[
	H^1(\mathrm{Gal}(\LL/\KK), \rho^{-1}(\mathtt{e})).
	\]
	Since $\rho^{-1}(\mathtt{e})$ is unipotent, this cohomology group vanishes.
\end{proof}
\begin{remark}
	By \Cref{formality-fini-coho} and the remark following it, the same holds if $\mathcal{M}_\KK$ is a nilpotent cbba finitely generated in each degree and has finite dimensional Aeppli cohomology.
\end{remark}
\begin{problem}
	A formal minimal algebra is necessarily a $\del\delbar$-algebra, i.e. it satisfies $\del\delbar$-lemma. Find a criterion for deciding when a $\del\delbar$-algebra is formal (perhaps in terms of automorphism and Hodge structure).
\end{problem}

%\subsection{Complex geometry}

\appendix
\section{Tensor product}\label{tensor-product}
Here we record results on tensor products of (bounded) zig-zags, whose proof can be found in \cite{stelzig2021structure}. To start with, we organize zig-zags into three families.

\begin{enumerate}[(i)]
	\item (balanced-family) Let $A_n$ $(n\in \ZZ)$ be a zig-zag of balanced type, of length $2n$, as shown below:
	\begin{center}
\begin{tabular}{ c c c c c }
$\begin{tikzcd}[row sep=small, column sep=small]
	\bullet & \bullet \\
	& \bullet & \bullet \\
	&& \bullet
	\arrow[from=1-1, to=1-2]
	\arrow[from=2-2, to=1-2]
	\arrow[from=2-2, to=2-3]
	\arrow[from=3-3, to=2-3]
\end{tikzcd}$ &
$\begin{tikzcd}[row sep=small, column sep=small]
	\bullet & \bullet \\
	& \bullet
	\arrow[from=1-1, to=1-2]
	\arrow[from=2-2, to=1-2]
\end{tikzcd}$ &
 $\bullet$ & 
 $\begin{tikzcd}[row sep=small, column sep=small]
	\bullet \\
	\bullet & \bullet
	\arrow[from=2-1, to=1-1]
	\arrow[from=2-1, to=2-2]
\end{tikzcd}$ &
$\begin{tikzcd}[row sep=small, column sep=small]
	\bullet \\
	\bullet & \bullet \\
	& \bullet & \bullet
	\arrow[from=2-1, to=1-1]
	\arrow[from=2-1, to=2-2]
	\arrow[from=3-2, to=2-2]
	\arrow[from=3-2, to=3-3]
\end{tikzcd}$\\

  $A_{-2}$ & $A_{-1}$ &$A_0$ & $A_1$ & $A_2$
\end{tabular}
\end{center}

 \item (vertical-family) Let $B_n$ $(n\in \NN_+)$ be a zig-zag of vertical type of length $(2n-1)$, as shown below:
 \begin{center}
 \begin{tabular}{ c c c }
 $\begin{tikzcd}[row sep=small, column sep=small]
	\bullet \\
	\bullet
	\arrow[from=2-1, to=1-1]
\end{tikzcd}$ &
$\begin{tikzcd}[row sep=small, column sep=small]
	\bullet \\
	\bullet & \bullet \\
	& \bullet
	\arrow[from=2-1, to=1-1]
	\arrow[from=2-1, to=2-2]
	\arrow[from=3-2, to=2-2]
\end{tikzcd}$ &
$\begin{tikzcd}[row sep=small, column sep=small]
	\bullet \\
	\bullet & \bullet \\
	& \bullet & \bullet \\
	&& \bullet
	\arrow[from=2-1, to=1-1]
	\arrow[from=2-1, to=2-2]
	\arrow[from=3-2, to=2-2]
	\arrow[from=3-2, to=3-3]
	\arrow[from=4-3, to=3-3]
\end{tikzcd}$\\
$B_1$ & $B_2$ & $B_3$
 \end{tabular}	
 \end{center}

 \item (horizontal-family) Let $C_n$ $(n\in \NN_+)$ be a zig-zag of horizontal type of length $(2n-1)$, as shown below:
 \begin{center}
 \begin{tabular}{ c c c }
 $\begin{tikzcd}[row sep=small, column sep=small]
	\bullet & \bullet
	\arrow[from=1-1, to=1-2]
\end{tikzcd}$ &
$\begin{tikzcd}[row sep=small, column sep=small]
	\bullet & \bullet \\
	& \bullet & \bullet
	\arrow[from=1-1, to=1-2]
	\arrow[from=2-2, to=1-2]
	\arrow[from=2-2, to=2-3]
\end{tikzcd}$ &
$\begin{tikzcd}[row sep=small, column sep=small]
	\bullet & \bullet \\
	& \bullet & \bullet \\
	&& \bullet & \bullet
	\arrow[from=1-1, to=1-2]
	\arrow[from=2-2, to=1-2]
	\arrow[from=2-2, to=2-3]
	\arrow[from=3-3, to=2-3]
	\arrow[from=3-3, to=3-4]
\end{tikzcd}$\\
$C_1$ & $C_2$ & $C_3$
 \end{tabular}	
 \end{center}
\end{enumerate}
These families are distinguished by Dolbeault cohomology groups, which pick out the ends of the zig-zags. We have:
\begin{enumerate}[(i)]
	\item $H_\del(A_n)=\KK$ and $H_\delbar(A_n)=\KK$,
	\item $H_\del(B_n)=\KK^2$ and $H_\delbar(B_n)=0$,
	\item $H_\del(C_n)=0$ and $H_\delbar(C_n)=\KK^2$.
\end{enumerate}

\begin{theorem}[cf. \cite{stelzig2021structure}]
	There are equivalences, module squares and degree shifts:
	\begin{enumerate}[(i)]
		\item $A_i\otimes A_j\equiv A_{i+j}$ for all $i,j$;
		\item $A_i\otimes B_j\equiv B_j$ and $A_i\otimes C_j=C_j$ for all $i,j$;
		\item $B_i\otimes B_j\equiv2 B_i$, $C_i\otimes C_j=2C_j$ for $i\le j$;
		\item $B_i\otimes C_j\equiv 0$ for all $i,j$.
	\end{enumerate}
	The appropriate degrees and locations of these zig-zags can be easily read off from applying K\"unneth theorem to Dolbeault cohomologies.
\end{theorem}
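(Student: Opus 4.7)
The plan is to exploit the structure theorem for bicomplexes: every bounded bicomplex decomposes uniquely as a direct sum of squares and zig-zags from the three families $\{A_n\}, \{B_n\}, \{C_n\}$. Consequently each tensor product in the statement decomposes as squares plus some zig-zag part, and the equivalence $\equiv$ (modulo squares and degree shifts) is determined by the multiset of zig-zag types appearing. The whole argument reduces to counting zig-zag summands of each family, and then pinning down their indices.

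The primary invariant will be Dolbeault cohomology. Since $A_n$ contributes $\KK$ to each of $H_\del, H_\delbar$; $B_n$ contributes $\KK^2$ to $H_\del$ and $0$ to $H_\delbar$; $C_n$ does the opposite; and squares contribute nothing, the total number and family of zig-zags in any bicomplex is read off from the pair $(\dim H_\del, \dim H_\delbar)$. Over the field $\KK$, Künneth gives $H_\del(V\otimes W)=H_\del V\otimes H_\del W$ and similarly for $H_\delbar$, so I can compute: in case (i) one gets $(\KK,\KK)$, hence exactly one $A$-family zig-zag; in (ii), $(\KK^2,0)$, hence one $B$ (resp.\ $C$); in (iii), $(\KK^4,0)$ and $(0,\KK^4)$, hence two $B$'s (resp.\ two $C$'s); and in (iv), $(0,0)$, which immediately yields $B_i\otimes C_j\equiv 0$ with no further work.

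The remaining content is to identify which specific $A_{i+j}, B_j, B_i$ etc.\ appear. For this I would track the bidegree locations of the endpoint Dolbeault classes: Künneth realizes each endpoint as a tensor product of endpoints of the factors, so the bidegree spread between the two ends of the resulting zig-zag is the sum of the bidegree spreads of the factor endpoints. Since the length (and thus the index $n$) of an $A_n, B_n, C_n$ is precisely this spread, the indices add in the predicted way: e.g.\ for (i), $A_i\otimes A_j$ has endpoint spread corresponding to $A_{i+j}$; for (ii), the $\KK^2$ factor of $H_\del B_j$ tensored against the single class of $H_\del A_i$ produces two classes whose separation matches $B_j$; and for (iii) with $i\le j$, the four $H_\del$-classes of $B_i\otimes B_j$ split into two pairs each reproducing $B_i$.

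The main obstacle is verifying that the length/endpoint argument really constrains the index, rather than allowing a trade-off with additional squares. To address this cleanly I would invoke a second invariant, namely the Bott--Chern dimensions (which equal $n$ for $B_n$, $n$ for $C_n$, and depend in a known way on $n$ for $A_n$), to cross-check the count; alternatively one can argue by induction, reducing the general identities to explicit low-index cases (such as $A_1\otimes A_1=A_2$, $A_1\otimes B_1\equiv B_1$, $B_1\otimes B_1\equiv 2B_1$, $B_1\otimes C_1\equiv 0$) which are small enough to decompose by hand, and propagating via the evident isomorphisms $A_{n+1}\cong A_n\otimes A_1$ (modulo squares) and the analogous splitting relations for $B_n, C_n$. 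Either route suffices; the combinatorial details are carried out in \cite{stelzig2021structure}.
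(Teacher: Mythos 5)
Your Dolbeault-counting step is sound and agrees with the paper's argument for part (iv); it also correctly pins down the number and the family ($A$, $B$, or $C$) of zig-zag summands in every case, and the endpoint argument does determine the index in (i) and (ii), since there each zig-zag has exactly one pair of Dolbeault endpoints and no pairing choice arises. But in (iii) there is a genuine gap that you flag without closing. The four $H_\del$-classes of $B_i\otimes B_j$ sit on a common anti-diagonal at relative positions $0,\, i-1,\, j-1,\, i+j-2$ (in units of $(1,-1)$), and \emph{all three} pairings yield legitimate $B$-type spreads: $\{0,i-1\},\{j-1,i+j-2\}$ gives $2B_i$; $\{0,j-1\},\{i-1,i+j-2\}$ gives $2B_j$; and $\{0,i+j-2\},\{i-1,j-1\}$ gives $B_{i+j-1}\oplus B_{\,|i-j|+1}$. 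The K\"unneth endpoint data therefore do not single out $2B_{\min(i,j)}$, and neither does the raw dot count (the latter two options even produce the same total). Your Bott--Chern fallback would decide it, but computing $\dim H_{BC}(B_i\otimes B_j)$ is precisely the nontrivial content one is trying to avoid, and there is no Bott--Chern K\"unneth formula to lean on.

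The other fallback you propose also fails as stated: propagation ``via the evident isomorphisms $A_{n+1}\cong A_n\otimes A_1$'' does let you induct up the $A$-family (and this is exactly what the paper does for (i), computing $A_1\otimes-$ explicitly), but the analogous step is unavailable for (iii) because $A_1\otimes B_j\equiv B_j$ by part (ii) — tensoring with $A_1$ never changes a $B$-index, so you cannot reach $B_1\otimes B_2$ from $B_1\otimes B_1$ this way, and ``analogous splitting relations for $B_n, C_n$'' are left unspecified. The paper's actual route for (ii) and (iii) is different and is the idea your proposal is missing: realize $B_n$ and $C_n$ as quotients of $A$-types by a one-dimensional sub-bicomplex, tensor the resulting short exact sequences, and deduce (ii), (iii) from (i) plus the already-established Dolbeault vanishing. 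With that mechanism in hand, the pairing ambiguity in (iii) never arises. You should either adopt that exact-sequence argument or carry out the Bott--Chern computation explicitly; as written, part (iii) is not proved.
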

\begin{proof}[Sketch of proof]
	(iv) can be proved using the vanishing of the Dolbeault groups. (ii) and (iii) can be deduced from (i) by noticing B,C-types are quotients of A-types with a one-dimensional kernel. Finally (i) can be obtained by iteratively applying $A_1\otimes -$ and $A_{-1}\otimes -$; by symmetry it suffices to understand the behavior of $A_1\otimes-$, which can be computed explicitly.
\end{proof}

%With these, we can prove \Cref{connectivity}.
%\begin{proof}[Proof of \Cref{connectivity}]
%	Without loss of generality we can assume $k=l=0$ so that $V, W$ are $-1$-connected. We may further assume that $V=H^0 V, W=H^0 W$ and that they are indecomposable. So it suffices to analyze three possibilities for $V\otimes W$:
%	\begin{enumerate}[(i)]
%		\item (dot in degree 0)$\otimes$(dot in degree 0),
%		\item (dot in degree 0)$\otimes$(zig-zag in degree 0,1),
%		\item (zig-zag in degree 0,1)$\otimes$(zig-zag in degree 0,1).
%	\end{enumerate}
%	The first two situations are easily seen to preserve -1-connectivity. For (iii) we apply the theorem above.
%\end{proof}

\bibliographystyle{alpha}
\bibliography{ref}

% \noindent {Jiahao Hu\\Yau Mathematical Sciences Center, Tsinghua University.\\ Email: jiahao.hu.math@gmail.com}

\end{document}